\newcommand{\Title}[1]{\bigskip\bigskip\centerline{\bf #1}\bigskip}
\newcommand{\Author}[1]{\medskip\centerline{ \it #1}}
\newcommand{\Affiliation}[1]{\medskip\centerline{#1}}
\newcommand{\Email}[1]{\medskip\centerline{#1}\bigskip}
\begin{document}

\newcommand{\N}{\mbox {$\mathbb N $}}
\newcommand{\Z}{\mbox {$\mathbb Z $}}
\newcommand{\Q}{\mbox {$\mathbb Q $}}
\newcommand{\R}{\mbox {$\mathbb R $}}
\newcommand{\lo }{\longrightarrow }
\newcommand{\ul}{\underleftarrow }
\newcommand{\rl}{\underrightarrow }
\newcommand{\rs }{\rightsquigarrow }
\newcommand{\ra }{\rightarrow } 
\newcommand{\dd }{\rightsquigarrow } 
\newcommand{\rars }{\Leftrightarrow }
\newcommand{\ol }{\overline }
\newcommand{\la }{\langle }
\newcommand{\tr }{\triangle }
\newcommand{\xr }{\xrightarrow }
\newcommand{\de }{\delta }
\newcommand{\pa }{\partial }
\newcommand{\LR }{\Longleftrightarrow }
\newcommand{\Ri }{\Rightarrow }
\newcommand{\va }{\varphi }
\newcommand{\Den}{{\rm Den}\,}
\newcommand{\Ker}{{\rm Ker}\,}
\newcommand{\Reg}{{\rm Reg}\,}
\newcommand{\Fix}{{\rm Fix}\,}
\newcommand{\Sup}{{\rm Sup}\,}
\newcommand{\Inf}{{\rm Inf}\,}
\newcommand{\Img}{{\rm Im}\,}
\newcommand{\Id}{{\rm Id}\,}
\newcommand{\ord}{{\rm ord}\,}

\newtheorem{theorem}{Theorem}[section]
\newtheorem{lemma}[theorem]{Lemma}
\newtheorem{proposition}[theorem]{Proposition}
\newtheorem{corollary}[theorem]{Corollary}
\newtheorem{definition}[theorem]{Definition}
\newtheorem{example}[theorem]{Example}
\newtheorem{examples}[theorem]{Examples}
\newtheorem{xca}[theorem]{Exercise}
\theoremstyle{remark}
\newtheorem{remark}[theorem]{Remark}
\newtheorem{remarks}[theorem]{Remarks}
\numberwithin{equation}{section}

\def\leftmark{L.C. Ciungu}

\Title{FOULIS-HOLLAND THEOREM FOR IMPLICATIVE-ORTHOMODULAR LATTICES} 
\title[Foulis-Holland theorem for implicative-orthomodular lattices]{}
                                                                       
\Author{\textbf{LAVINIA CORINA CIUNGU}}
\Affiliation{Department of Mathematics} 
\Affiliation{St Francis College}
\Affiliation{179 Livingston Street, Brooklyn, NY 11201, USA}
\Email{lciungu@sfc.edu}

\begin{abstract} 
We introduce the notion of distributivity for implicative-orthomodular lattices, proving an analogue result of the  
Foulis-Holland theorem. Based on this result, we characterize the distributive implicative-orthomodular lattices.
Moreover, we define the center of an implicative-orthomodular lattice as the set of all elements that commute with 
all other elements, and we prove that the center is an implicative-Boolean algebra. 
Additionally, we give new characterizations of implicative-orthomodular lattices. \\

\noindent
\textbf{Keywords:} {implicative BE algebra, implicative-orthomodular lattice, distributivity, Foulis-Holland theorem, implicative-Boolean center, commutor} \\
\textbf{AMS classification (2020):} 06C15, 03G25, 06A06, 81P10
\end{abstract}

\maketitle

\section{Introduction} 

Orthomodular lattices were first introduced by G. Birkhoff and J. von Neumann, by studying the structure of the 
lattice of projection operators on a Hilbert space.  
They introduced the orthomodular lattices as the event structure describing quantum mechanical experiments \cite{Birk1}. Later, orthomodular lattices and orthomodular posets were considered as the standard quantum logic \cite{Varad1} (see also \cite{Beran, DvPu, DvKu}). 
Complete studies on orthomodular lattices are presented in the monographs \cite{Birk2} and \cite{Beran}, while   
details on distributive lattices can be found in \cite{Balbes}. 
Based on the commutativity relation, D.J. Foulis (\cite{Foulis1}) and, independently, S.S. Holland (\cite{Holl1}) 
proved that, given a triple of elements in an orthomodular lattice, if at least one of them commutes with other two, 
then the triple is distributive.  
We also mention that certain results of continuous geometry were presented in \cite{Holl2} for the case of complete orthomodular lattices. \\
An orthomodular lattice (OML for short) is an ortholattice $(X,\wedge,\vee,^{'},0,1)$ verifying \\
$(OM)$ $(x\wedge y)\vee ((x\wedge y)^{'}\wedge x)=x$, for all $x,y\in X$, or, equivalently \\ 
$(OM^{'})$ $x\vee (x^{'}\wedge y)=y$, whenever $x\le y$ (where $x\le y$ iff $x=x\wedge y$) (\cite{PadRud}). \\
The orthomodular lattices were redefined and studied by A. Iorgulescu (\cite{Ior32}) based on involutive m-BE algebras, 
and two generalizations of orthomodular lattices were given: orthomodular softlattices and orthomodular widelattices. 
Based on implicative involutive BE algebras, we redefined in \cite{Ciu83} the orthomodular lattices, by introducing 
and studying the implicative-orthomodular lattices (IOML for short), giving certain characterizations of these structures. \\
In this paper, we define the notion of distributivity for implicative-orthomodular lattices and we investigate for  these structures certain results presented in \cite{Holl1} in the case of orthomodular lattices. 
We prove the Foulis-Holland theorem for implicative-orthomodular lattices, and, based on this result, we characterize 
the distributive implicative-orthomodular lattices. 
We also show that an implicative involutive BE algebra is an implicative-orthomodular lattice if and only if the 
commutativity relation is symmetric. 
We introduce the notion of the center $\mathcal{C}(X)$ of an implicative-orthomodular lattice $X$ as the 
set of all elements of $X$ that commute with all other elements of $X$, and we prove that $\mathcal{C}(X)$ is an implicative-Boolean subalgebra of $X$. 
We characterize $\mathcal{C}(X)$ with respect to the set $\mathcal{K}(x)$ of all complements of the 
elements $x\in X$. 
We also define the commutor of an implicative-orthomodular lattice $X$ as the generalization of the concept 
of the center of $X$, and we prove that the commutor is a sublattice of $X$ containing $\mathcal{C}(X)$. 
Finnaly, we apply the Foulis-Holland theorem to prove new properties and characterizations of 
implicative-orthomodular lattices.

$\vspace*{1mm}$

\section{Preliminaries}

We recall some basic notions and results regarding BE algebras that will be used in the paper. 

\emph{BE algebras} were introduced in \cite{Kim1} as algebras $(X,\ra,1)$ of type $(2,0)$ satisfying the 
following conditions, for all $x,y,z\in X$: 
$(BE_1)$ $x\ra x=1;$ 
$(BE_2)$ $x\ra 1=1;$ 
$(BE_3)$ $1\ra x=x;$ 
$(BE_4)$ $x\ra (y\ra z)=y\ra (x\ra z)$. 
A relation $\le$ is defined on $X$ by $x\le y$ iff $x\ra y=1$. 
A BE algebra $X$ is \emph{bounded} if there exists $0\in X$ such that $0\le x$, for all $x\in X$. 
In a bounded BE algebra $(X,\ra,0,1)$ we define $x^*=x\ra 0$, for all $x\in X$. 
A bounded BE algebra $X$ is called \emph{involutive} if $x^{**}=x$, for any $x\in X$. 

\begin{lemma} \label{qbe-10} $\rm($\cite{Ciu83}$\rm)$ 
Let $(X,\ra,1)$ be a BE algebra. The following hold for all $x,y,z\in X$: \\
$(1)$ $x\ra (y\ra x)=1;$ 
$(2)$ $x\le (x\ra y)\ra y$. \\
If $X$ is bounded, then: \\
$(3)$ $x\ra y^*=y\ra x^*;$ 
$(4)$ $x\le x^{**}$. \\
If $X$ is involutive, then: \\
$(5)$ $x^*\ra y=y^*\ra x;$ 
$(6)$ $x^*\ra y^*=y\ra x;$ 
$(7)$ $(x\ra y)^*\ra z=x\ra (y^*\ra z);$ \\
$(8)$ $x\ra (y\ra z)=(x\ra y^*)^*\ra z;$    
$(9)$ $(x^*\ra y)^*\ra (x^*\ra y)=(x^*\ra x)^*\ra (y^*\ra y)$.  
\end{lemma}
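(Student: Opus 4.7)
The plan is to derive all nine identities by short manipulations of the four BE-algebra axioms, with $(BE_4)$ (exchange) doing almost all the work, supplemented by involutivity once we reach parts (5)--(9).

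For (1), $(BE_4)$ gives $x\ra(y\ra x)=y\ra(x\ra x)=y\ra 1=1$ by $(BE_1)$ and $(BE_2)$. For (2), unfolding $\le$ reduces the claim to $x\ra((x\ra y)\ra y)=1$; applying $(BE_4)$ turns this into $(x\ra y)\ra(x\ra y)=1$, which holds by $(BE_1)$. Parts (3) and (4) use only that $x^{*}=x\ra 0$: identity (3) is $(BE_4)$ with $z=0$, and (4) asks for $x\ra(x^{*}\ra 0)=1$, which $(BE_4)$ converts to $x^{*}\ra x^{*}=1$.

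For the involutive part, I would get (5) by substituting $x^{*},y^{*}$ for $x,y$ in (3) and collapsing double stars; (6) comes from (3) with $y$ replaced by $y^{*}$. For (7), use (5) to rewrite $(x\ra y)^{*}\ra z$ as $z^{*}\ra(x\ra y)$, apply $(BE_4)$ to obtain $x\ra(z^{*}\ra y)$, and apply (5) once more inside to replace $z^{*}\ra y$ by $y^{*}\ra z$. Identity (8) is then (7) with $y$ replaced by $y^{*}$, using $y^{**}=y$.

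The only step with any real content is (9); here the right idea is to push both sides to a common normal form. Using (7) with $x$ replaced by $x^{*}$ and with $z:=x^{*}\ra y$, I would rewrite the left-hand side as $x^{*}\ra(y^{*}\ra(x^{*}\ra y))$, then swap the inner implications by $(BE_4)$ to obtain the symmetric expression $x^{*}\ra(x^{*}\ra(y^{*}\ra y))$. A second application of (7), now to $(x^{*}\ra x)^{*}\ra(y^{*}\ra y)$ (take $x\mapsto x^{*}$, $y\mapsto x$, $z\mapsto y^{*}\ra y$), reduces the right-hand side to exactly the same expression. The main obstacle is thus purely notational: spotting that the correct common normal form is $x^{*}\ra(x^{*}\ra(y^{*}\ra y))$ and choosing the matching instance of (7) for each side; once that is done, the identity is immediate.
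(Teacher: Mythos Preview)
Your proof is correct in every part; the manipulations of $(BE_4)$, the boundedness/involutivity substitutions, and in particular your normal-form argument for (9) via two instances of (7) all go through without gaps. Note, however, that the paper does not actually prove this lemma: it merely cites it from \cite{Ciu83}, so there is no ``paper's own proof'' to compare against. Your argument stands on its own as a complete and standard verification.
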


\noindent
In a BE algebra $X$, we define the additional operation: \\
$\hspace*{3cm}$ $x\Cup y=(x\ra y)\ra y$. \\
If $X$ is involutive, we define the operation: \\
$\hspace*{3cm}$ $x\Cap y=((x^*\ra y^*)\ra y^*)^*$, \\
and the relations $\le_Q$ and $\le_L$ by: \\
$\hspace*{3cm}$ $x\le_Q y$ iff $x=x\Cap y$ and $x\le_L y$ iff $x=(x\ra y^*)^*$. \\
\noindent
We can see that $x\le_L y$ iff $x^*=y\ra x^*$. 

\begin{lemma} \label{qbe-10-10}Let $(X,\ra,1)$ be an involutive BE algebra. 
The following holds \\
$\hspace*{2cm}$ $(x_1\ra y_1^*)^*\ra (x_2\ra y_2^*)=(x_1\ra x_2^*)^*\ra (y_1\ra y_2^*)$, \\ 
for all $x_1,x_2,y_1,y_2\in X$. 
\end{lemma}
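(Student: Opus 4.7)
The plan is to reduce both sides of the claimed identity to a common normal form, namely $x_1\ra(x_2\ra(y_1\ra y_2^*))$, using identity (7) of Lemma \ref{qbe-10} together with involutivity ($y^{**}=y$) and the BE-axiom $(BE_4)$. No new idea is required: everything is a single line of rewriting on each side.

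First I would rewrite the left-hand side. Applying Lemma \ref{qbe-10}(7) with $x:=x_1$, $y:=y_1^*$, $z:=x_2\ra y_2^*$ gives
\[
(x_1\ra y_1^*)^*\ra (x_2\ra y_2^*)=x_1\ra\bigl(y_1^{**}\ra(x_2\ra y_2^*)\bigr)=x_1\ra\bigl(y_1\ra(x_2\ra y_2^*)\bigr),
\]
the last step by involutivity. Now $(BE_4)$ lets me exchange the antecedents $y_1$ and $x_2$, so
\[
y_1\ra(x_2\ra y_2^*)=x_2\ra(y_1\ra y_2^*),
\]
and the left-hand side becomes $x_1\ra\bigl(x_2\ra(y_1\ra y_2^*)\bigr)$.

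Next I would perform the analogous reduction of the right-hand side. Applying Lemma \ref{qbe-10}(7) with $x:=x_1$, $y:=x_2^*$, $z:=y_1\ra y_2^*$ gives
\[
(x_1\ra x_2^*)^*\ra(y_1\ra y_2^*)=x_1\ra\bigl(x_2^{**}\ra(y_1\ra y_2^*)\bigr)=x_1\ra\bigl(x_2\ra(y_1\ra y_2^*)\bigr),
\]
again using involutivity. Both sides thus equal $x_1\ra\bigl(x_2\ra(y_1\ra y_2^*)\bigr)$, which proves the lemma.

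There is no genuine obstacle here: the statement is a purely formal consequence of (7), the involutivity $u^{**}=u$, and $(BE_4)$. The only care needed is to instantiate (7) correctly on each side (noting that the outer starred term absorbs the inner star via the double negation) and to apply $(BE_4)$ exactly once on the left to align the antecedents with those appearing on the right.
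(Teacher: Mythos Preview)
Your proof is correct and follows essentially the same approach as the paper: both arguments use Lemma~\ref{qbe-10}(7) together with involutivity and $(BE_4)$ to rewrite each side as $x_1\ra(x_2\ra(y_1\ra y_2^*))$. The only cosmetic difference is that the paper presents this as a single chain of equalities from the left-hand side to the right-hand side, whereas you reduce each side separately to the common normal form.
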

\begin{proof}
Using twice Lemma \ref{qbe-10}$(7)$, we get:\\
$\hspace*{1.00cm}$ $(x_1\ra y_1^*)^*\ra (x_2\ra y_2^*)=x_1\ra (y_1\ra (x_2\ra y_2^*))$ \\
$\hspace*{5.20cm}$ $=x_1\ra (x_2\ra (y_1\ra y_2^*))$ \\
$\hspace*{5.20cm}$ $=(x_1\ra x_2^*)^*\ra (y_1\ra y_2^*)$. 
\end{proof}

\begin{proposition} \label{qbe-20} $\rm($\cite{Ciu83}$\rm)$ Let $X$ be an involutive BE algebra. 
Then the following hold for all $x,y,z\in X$: \\
$(1)$ $x\le_Q y$ implies $x=y\Cap x$ and $y=x\Cup y;$ \\
$(2)$ $\le_Q$ is reflexive and antisymmetric; \\
$(3)$ $x\Cap y=(x^*\Cup y^*)^*$ and $x\Cup y=(x^*\Cap y^*)^*;$ \\ 
$(4)$ $x\le_Q y$ implies $x\le y;$ \\
$(5)$ $x, y\le_Q z$ and $z\ra x=z\ra y$ imply $x=y$ \emph{(cancellation law)}; \\
$(6)$ $x\ra ((y\ra x^*)^*\Cup z)=y\Cup (x\ra z)$.    
\end{proposition}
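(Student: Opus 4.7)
The plan is to prove the six parts in a non-obvious order, since later parts end up being easier to use as inputs to earlier ones. I would tackle them in the order $(3),(5),(2)\text{-reflexive},(4),(1),(2)\text{-antisym},(6)$.

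Parts $(3)$ and $(5)$ are essentially definitional. For $(3)$, unfolding the definitions gives $(x^*\Cup y^*)^*=((x^*\ra y^*)\ra y^*)^*=x\Cap y$, and using $x^{**}=x$ we similarly get $(x^*\Cap y^*)^*=(x\ra y)\ra y=x\Cup y$. For $(5)$, apply Lemma \ref{qbe-10}$(6)$ inside $x\Cap z$ to rewrite $x=((z\ra x)\ra z^*)^*$ and likewise $y=((z\ra y)\ra z^*)^*$; the hypothesis $z\ra x=z\ra y$ then forces $x=y$. Reflexivity in $(2)$ is immediate: $x\Cap x=((x^*\ra x^*)\ra x^*)^*=(1\ra x^*)^*=x^{**}=x$.

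For $(4)$, I would unpack $x\le_Q y$ as $x^*=(x^*\ra y^*)\ra y^*$. Using $(BE_4)$ one checks $y^*\le (x^*\ra y^*)\ra y^*$ (because $y^*\ra ((x^*\ra y^*)\ra y^*)=(x^*\ra y^*)\ra (y^*\ra y^*)=1$), so $y^*\le x^*$, which by Lemma \ref{qbe-10}$(6)$ gives $x\ra y=y^*\ra x^*=1$, i.e.\ $x\le y$. With $(4)$ in hand, the second half of $(1)$ is one line: $x\le y$ yields $x\ra y=1$, whence $x\Cup y=(x\ra y)\ra y=1\ra y=y$.

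The more delicate point, and the one I expect to be the main obstacle, is the first half of $(1)$ and the antisymmetric half of $(2)$: both reduce to the algebraic identity ``$(a\ra b)\ra b=a$ implies $(b\ra a)\ra a=a$'' in the involutive BE algebra. Here I plan to use the $(BE_4)$ collapse trick
\[
b\ra\bigl((a\ra b)\ra b\bigr)=(a\ra b)\ra (b\ra b)=(a\ra b)\ra 1=1,
\]
so substituting the hypothesis into $(b\ra a)\ra a=(b\ra((a\ra b)\ra b))\ra ((a\ra b)\ra b)=1\ra a=a$. Applying this with $a=x^*$, $b=y^*$ gives $y^*\Cup x^*=x^*$, hence $y\Cap x=(y^*\Cup x^*)^*=x$, completing $(1)$. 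For antisymmetry in $(2)$, the same substitution trick applied once to $a=(a\ra b)\ra b$ and $b=(b\ra a)\ra a$ collapses $a$ to $b$, so $x^*=y^*$ and $x=y$.

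Finally, for $(6)$ I would just compute both sides. By Lemma \ref{qbe-10}$(7)$, $(y\ra x^*)^*\ra z=y\ra (x\ra z)$, so
\[
(y\ra x^*)^*\Cup z=(y\ra (x\ra z))\ra z.
\]
Then $(BE_4)$ gives $x\ra\bigl((y\ra (x\ra z))\ra z\bigr)=(y\ra (x\ra z))\ra (x\ra z)=y\Cup (x\ra z)$, which is the right-hand side. The only real subtlety across the whole proposition is spotting the $(BE_4)$-collapse used in $(1)$ and $(2)$; once that is isolated, everything else is routine manipulation of Lemma \ref{qbe-10}.
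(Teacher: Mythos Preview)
Your proposal is correct. The paper does not actually supply a proof of this proposition---it is quoted from \cite{Ciu83} without argument---so there is no in-paper proof to compare against. Each of your six arguments checks out: the key step is indeed the implication ``$(a\ra b)\ra b=a$ implies $(b\ra a)\ra a=a$'' (pure $(BE_4)$, no involutivity needed), and once that is established, part $(1)$ follows and antisymmetry in $(2)$ is immediate, since $x\le_Q y$ gives $y^*\Cup x^*=x^*$ via your lemma while $y\le_Q x$ gives $y^*\Cup x^*=y^*$ by definition, forcing $x^*=y^*$. Your phrasing of the antisymmetry step is a bit compressed, but the content is right; the computations for $(3)$--$(6)$ are routine and accurate.
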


\begin{lemma} \label{qbe-20-10} Let $X$ be an involutive BE algebra. Then: \\ 
$(1)$ $\le$ is not an order relation; \\
$(2)$ $\le_L$ implies $\le$; \\
$(3)$ $\le_L$ is an order relation on $X$; \\
$(4)$ $z\le_L x$ and $z\le_L y$ imply $z\le_L (x\ra y^*)^*$.   
\end{lemma}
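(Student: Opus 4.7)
For (1), I would exhibit a small finite involutive BE algebra in which $\le$ fails antisymmetry. A four-element candidate $X=\{0,a,b,1\}$ with $a^*=b$, $b^*=a$, $a\ra b=b\ra a=1$ should work: routine tabulation of $\ra$ confirms $BE_1$--$BE_4$ and $x^{**}=x$, and then $a\neq b$ while $a\le b$ and $b\le a$, so $\le$ is not antisymmetric.

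For (2), I would rewrite $x\le_L y$ as $x^*=x\ra y^*$ and apply Lemma \ref{qbe-10}(7) with $y^*$ in the role of the second variable and $z=y$:
\[
(x\ra y^*)^*\ra y \;=\; x\ra(y^{**}\ra y)\;=\;x\ra(y\ra y)\;=\;x\ra 1\;=\;1.
\]
Since $x=(x\ra y^*)^*$ by hypothesis, the left-hand side is simply $x\ra y$, giving $x\le y$.

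For (3), I would check reflexivity, antisymmetry, and transitivity separately. Antisymmetry is immediate from Lemma \ref{qbe-10}(3): if $x^*=x\ra y^*$ and $y^*=y\ra x^*$, then $x\ra y^*=y\ra x^*$ forces $x^*=y^*$ and hence $x=y$ by involutivity. Transitivity reduces to one $BE_4$ step: assuming $x^*=x\ra y^*$ and $y^*=z\ra y^*$, Lemma \ref{qbe-10}(3) together with $BE_4$ give
\[
x\ra z^* \;=\; z\ra x^* \;=\; z\ra(x\ra y^*) \;=\; x\ra(z\ra y^*) \;=\; x\ra y^* \;=\; x^*.
\]
The main obstacle I anticipate is reflexivity, which amounts to showing $x^*=x\ra x^*$: the inequality $x^*\le x\ra x^*$ follows at once from $BE_4$, but the reverse direction does not hold in an arbitrary involutive BE algebra, so the proof must either invoke Lemma \ref{qbe-10}(9) in a nonobvious way or draw on an additional identity from the \emph{implicative} setting emphasized throughout the paper.

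For (4), the target is $z^*=z\ra(x\ra y^*)$, since by involutivity $(z\ra((x\ra y^*)^*)^*)^{*}=(z\ra(x\ra y^*))^{*}$. A single application of $BE_4$ rewrites $z\ra(x\ra y^*)=x\ra(z\ra y^*)$, and the hypothesis $z\le_L y$ collapses this to $x\ra z^*$; then $z\le_L x$ combined with Lemma \ref{qbe-10}(3) yields $x\ra z^*=z\ra x^*=z^*$, closing the chain.
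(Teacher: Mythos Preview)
Your arguments for $(2)$, $(4)$, and the antisymmetry and transitivity parts of $(3)$ are correct and essentially identical to the paper's: the paper also reduces $(2)$ to a one-line computation starting from $x=(x\ra y^*)^*$, proves transitivity by the same $BE_4$ substitution you use, and handles $(4)$ by collapsing $z\ra(x\ra y^*)$ through the two hypotheses (the paper packages the last step as Lemma~\ref{qbe-10}$(7)$ rather than $BE_4$ plus Lemma~\ref{qbe-10}$(3)$, but the content is the same). For $(1)$ the paper gives only the bare assertion ``$\le$ is not transitive''; your explicit four-element table is more than the paper provides and is correct.

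Your hesitation at reflexivity in $(3)$ is well placed, and in fact sharper than the paper. The paper writes ``obviously, $\le_L$ is reflexive,'' but reflexivity of $\le_L$ is precisely the identity $x\ra x^*=x^*$, i.e.\ axiom $(iG)$, which by Lemma~\ref{iol-30} is a consequence of the \emph{implicative} law, not of involutivity alone. Your own counterexample from $(1)$ already witnesses the failure: there $a\ra a^*=a\ra b=1\neq b=a^*$, so $a\not\le_L a$. Thus the paper's ``obviously'' is unjustified at the stated generality; part $(3)$ holds for \emph{implicative} involutive BE algebras (via $(iG)$), and elsewhere in the paper $\le_L$ is only used in that setting. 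You have not missed an argument---you have located a gap between the lemma's hypothesis and what its proof actually needs.
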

\begin{proof}
$(1)$ $\le$ is not transitive. \\
$(2)$ If $x\le_L y$, then $x=(x\ra y^*)^*$, so that \\
$\hspace*{2.00cm}$ $x\ra y=(x\ra y^*)^*\ra y=y^*\ra (x\ra y^*)=x\ra (y^*\ra y^*)=1$, \\
hence $x\le y$. \\
$(3)$ If $x\le_L y$ and $y\le_L x$, then $x^*=x\ra y^*=y^*$, hence $x=y$. 
From $x\le_L y$ and $y\le_L z$, we get: \\
$\hspace*{2.00cm}$ $x^*=x\ra y^*=x\ra (z\ra y^*)=z\ra (x\ra y^*)=z\ra x^*$, \\
hence $x\le_L z$. Since, obviously, $\le_L$ is reflexive and antisymmetric, it follows that it is an order relation. \\
$(4)$ From $z\le_L x$ and $z\le_L y$ we have $z^*=x\ra z^*$ and $z^*=y\ra z^*$. 
It follows that $z^*=x\ra (y\ra z^*)$, and by Lemma \ref{qbe-10}$(7)$, we get $z^*=(x\ra y^*)^*\ra z^*$, 
that is $z\le_L (x\ra y^*)^*$.  
\end{proof}

\noindent 
A \emph{quantum-Wajsberg algebra} (\emph{QW algebra, for short}) (\cite{Ciu83}) $(X,\ra,^*,1)$ is an 
involutive BE algebra $(X,\ra,^*,1)$ satisfying the following condition: for all $x,y,z\in X$, \\
(QW) $x\ra ((x\Cap y)\Cap (z\Cap x))=(x\ra y)\Cap (x\ra z)$. \\
It was proved in \cite{Ciu83} that condition (QW) is equivalent to the following conditions: \\
$(QW_1)$ $x\ra (x\Cap y)=x\ra y;$ \\ 
$(QW_2)$ $x\ra (y\Cap (z\Cap x))=(x\ra y)\Cap (x\ra z)$. \\
An involutive-BE algebra satisfying condition $(QW_2)$ is called an \emph{implicative-orthomolular algebra}. 

$\vspace*{1mm}$

\section{On implicative-orthomodular lattices}

In this section, we recall certain notions and results regarding the implicative-orthomodular lattices (\cite{Ciu83}), and we prove new properties and characterizations of these structures. 

\begin{definition} \label{iol-20} 
\emph{
A BE algebra is called \emph{implicative} if it satisfies the following condition: for all $x,y\in X$, \\
$(Impl)$ $(x\ra y)\ra x=x$. 
}
\end{definition}

\begin{lemma} \label{iol-30} \emph{(\cite{Ciu83})} Let $(X,\ra,^*,1)$ be an implicative involutive BE algebra. 
Then $X$ verifies the following axioms: for all $x,y\in X$, \\
$(iG)$       $x^*\ra x=x$, or equivalently, $x\ra x^*=x^*;$ \\
$(Iabs$-$i)$ $(x\ra (x\ra y))\ra x=x;$ \\
$(Pimpl)$    $x\ra (x\ra y)=x\ra y$.    
\end{lemma}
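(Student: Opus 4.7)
The three identities are all fairly short consequences of $(Impl)$ together with the involutive BE machinery from Lemma \ref{qbe-10}, so my plan is to obtain $(iG)$ and $(Iabs\text{-}i)$ by direct specialization of $(Impl)$, and then derive $(Pimpl)$ by feeding $(iG)$ into Lemma \ref{qbe-10}$(8)$. I do not expect any genuinely hard step; the only mild subtlety is the choice of the right substitution instance for each axiom.

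For $(iG)$, I would simply set $y=0$ in $(Impl)$ $(x\ra y)\ra x=x$, giving $(x\ra 0)\ra x=x$, i.e.\ $x^*\ra x=x$. To obtain the equivalent form $x\ra x^*=x^*$, I would replace $x$ by $x^*$ in the identity just proved: $x^{**}\ra x^*=x^*$, and then use involutivity $x^{**}=x$. For $(Iabs\text{-}i)$, the trick is even cheaper: apply $(Impl)$ with $y$ replaced by $x\ra y$, obtaining directly $\bigl(x\ra(x\ra y)\bigr)\ra x=x$.

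The only step with any content is $(Pimpl)$. Here I would invoke Lemma \ref{qbe-10}$(8)$, which in the involutive setting gives
\[
x\ra(y\ra z)=(x\ra y^*)^*\ra z.
\]
Substituting $y:=x$ and keeping $z:=y$, this becomes
\[
x\ra(x\ra y)=(x\ra x^*)^*\ra y.
\]
Now $(iG)$ says $x\ra x^*=x^*$, whence $(x\ra x^*)^*=x^{**}=x$, and so the right-hand side collapses to $x\ra y$, which is exactly $(Pimpl)$.

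The main (very mild) obstacle is really just bookkeeping: making sure $(iG)$ is proved \emph{before} it is used in the derivation of $(Pimpl)$, and invoking involutivity cleanly in the two places it is needed. No commutator or distributivity machinery from later sections is required; everything reduces to $(Impl)$ plus the identities already available in Lemma \ref{qbe-10}.
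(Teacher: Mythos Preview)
Your argument is correct in every step: the substitutions into $(Impl)$ yield $(iG)$ and $(Iabs\text{-}i)$ immediately, and the combination of Lemma~\ref{qbe-10}$(8)$ with $(iG)$ and involutivity gives $(Pimpl)$ exactly as you describe. The paper itself does not supply a proof for this lemma---it is stated as a citation of \cite{Ciu83}---so there is no in-paper argument to compare against; your derivation is a clean, self-contained reconstruction using only the tools already available in Section~2.
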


\begin{lemma} \label{iol-30-10} \emph{(\cite{Ciu83})}
Let $(X,\ra,^*,1)$ be an involutive BE algebra. The following are equivalent: \\
$(a)$ $X$ is implicative; \\
$(b)$ $X$ verifies axioms $(iG)$ and $(Iabs$-$i)$; \\ 
$(c)$ $X$ verifies axioms $(Pimpl)$ and $(Iabs$-$i)$. 
\end{lemma}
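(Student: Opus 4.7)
The plan is to close the cycle of implications $(a)\Rightarrow(b)$, $(a)\Rightarrow(c)$, $(c)\Rightarrow(a)$, $(b)\Rightarrow(c)$. The two forward directions from $(a)$ are already contained in Lemma~\ref{iol-30}, so only the two converses require argument.

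The implication $(c)\Rightarrow(a)$ I expect to be a one-line substitution: inside the hypothesis $(Iabs$-$i)$ I apply $(Pimpl)$ to replace $x\ra(x\ra y)$ by $x\ra y$, and the resulting identity $(x\ra y)\ra x = x$ is precisely $(Impl)$.

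The substantive direction is $(b)\Rightarrow(c)$, where my plan is to show that in the involutive setting axiom $(iG)$ by itself already forces $(Pimpl)$, so that the $(Iabs$-$i)$ clause carries over unchanged. The key move is to invoke the ``decoupling'' identity Lemma~\ref{qbe-10}$(8)$, namely $x\ra(y\ra z)=(x\ra y^*)^*\ra z$, with both of its first two arguments equal to $x$. This rewrites $x\ra(x\ra y)$ as $(x\ra x^*)^*\ra y$; then $(iG)$ collapses $x\ra x^*$ to $x^*$, and involutivity turns $(x^*)^*$ back into $x$, so the right-hand side becomes $x\ra y$. That is exactly $(Pimpl)$, and combined with $(Iabs$-$i)$ from hypothesis $(b)$ one obtains $(c)$.

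The only step that requires a spark of insight rather than routine bookkeeping is recognizing Lemma~\ref{qbe-10}$(8)$ as the right instrument for uncoupling the two occurrences of $x$ in $x\ra(x\ra y)$; once this identification is made everything else is mechanical, and it is worth noting that the derivation of $(Pimpl)$ from $(iG)$ does not itself use $(Iabs$-$i)$.
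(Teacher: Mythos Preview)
Your argument is correct. Note, however, that the paper does not supply its own proof of this lemma: it is quoted from \cite{Ciu83} without argument, so there is nothing in the present text to compare your proof against. On its own merits your chain of implications is sound. The forward directions $(a)\Rightarrow(b)$ and $(a)\Rightarrow(c)$ are indeed immediate from Lemma~\ref{iol-30}; the step $(c)\Rightarrow(a)$ is the trivial substitution you describe; and for $(b)\Rightarrow(c)$ your use of Lemma~\ref{qbe-10}$(8)$ with $y:=x$ is valid because that identity holds in every involutive BE algebra, so $(iG)$ alone (together with involutivity) yields $(Pimpl)$, and $(Iabs\text{-}i)$ carries over to complete $(c)$.
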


\begin{lemma} \label{iol-30-20} Let $X$ be an implicative involutive BE algebra. The following hold, 
for all $x,y\in X$: \\ 
$(1)$ $x\le_L y$ iff $y^*\le_L x^*;$ \\
$(2)$ $\le_Q$ implies $\le_L$. 
\end{lemma}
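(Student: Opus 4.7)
The plan is to exploit the equivalent characterization $x\le_L y \Leftrightarrow x^* = y\ra x^*$ noted right after the definition of $\le_L$ in Section 2, together with the axioms $(Impl)$ and $(Pimpl)$ from Lemma \ref{iol-30}.

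For (1), I would first translate both sides using this equivalent form. The condition $x\le_L y$ becomes $y\ra x^* = x^*$. The condition $y^*\le_L x^*$, applying the same characterization with $y^*, x^*$ in place of $x, y$ and invoking involution, becomes $x^*\ra y = y$. Now $(Impl)$ gives two unconditional identities: with $a=y$, $b=x^*$ we have $(y\ra x^*)\ra y = y$, and with $a=x^*$, $b=y$ we have $(x^*\ra y)\ra x^* = x^*$. Substituting the hypothesis $y\ra x^* = x^*$ into the first identity immediately produces $x^*\ra y = y$, while substituting $x^*\ra y = y$ into the second yields $y\ra x^* = x^*$. Hence the two translated conditions are equivalent.

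For (2), I would unpack $x\le_Q y$ as $x = x\Cap y = ((x^*\ra y^*)\ra y^*)^*$, i.e.\ $x^* = (x^*\ra y^*)\ra y^*$. Rewriting the inner term via Lemma \ref{qbe-10}(6) as $x^*\ra y^* = y\ra x$, and then rewriting $(y\ra x)\ra y^* = y\ra (y\ra x)^*$ via Lemma \ref{qbe-10}(3), casts the hypothesis into the form $x^* = y\ra (y\ra x)^*$. Applying $y\ra(\cdot)$ to both sides and using $(Pimpl)$ on the right to collapse $y\ra(y\ra z) = y\ra z$ with $z = (y\ra x)^*$, I obtain $y\ra x^* = y\ra (y\ra x)^* = x^*$. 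By the equivalent form of $\le_L$, this is precisely $x\le_L y$.

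The expected obstacle sits in (2): the move is to recognize that the right-hand side of $x^* = (x^*\ra y^*)\ra y^*$ can be rearranged into a nested $y\ra\bigl(y\ra(\cdot)\bigr)$ via Lemma \ref{qbe-10}(3), at which point $(Pimpl)$ applied after hitting with $y\ra$ gives the $\le_L$ identity for free. Part (1), by contrast, is essentially an immediate application of $(Impl)$ in two symmetric instances, so no real difficulty is anticipated there.
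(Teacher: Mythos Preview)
Your proposal is correct. Part~(1) is essentially identical to the paper's argument: both translate $x\le_L y$ and $y^*\le_L x^*$ into the equations $y\ra x^*=x^*$ and $x^*\ra y=y$, respectively, and pass between them by two instances of $(Impl)$. For part~(2), both you and the paper substitute the hypothesis $x^*=(x^*\ra y^*)\ra y^*$ into $y\ra x^*$ and simplify; the paper uses $(BE_4)$ to pull the outer $y\ra$ inward and then applies $(iG)$ to collapse $y\ra y^*=y^*$, whereas you first rewrite the hypothesis as $x^*=y\ra(y\ra x)^*$ via Lemma~\ref{qbe-10}(3),(6) and then collapse with $(Pimpl)$. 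This is a minor variation of the same computation rather than a genuinely different route.
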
 
\begin{proof}
$(1)$ If $x\le_L y$, then $x^*=x\ra y^*=y\ra x^*$, and we get $y^*\ra (x^*)^*=y^*\ra x=  
 x^*\ra y=(y\ra x^*)\ra y=y$ (by $(Impl)$), hence $y^*=(y^*\ra (x^*)^*)^*$, so that $y^*\le_L x^*$. 
Conversely, if $y^*\le_L x^*$, then $y=x^*\ra y$ and we get $x\ra y^*=y\ra x^*=(x^*\ra y)\ra x^*=x^*$ (by $(Impl)$), 
so that $x\le_L y$. \\
$(2)$ Assume that $x\le_Q y$, that is $x\Cap y=x$. Since by Lemma \ref{iol-30}, $(Impl)$ implies $(iG)$, we have 
$y\ra y^*=y^*$. It follows that: \\
$\hspace*{2.00cm}$ $x\ra y^*=y\ra x^*=y\ra (x\Cap y)^*=y\ra ((x^*\ra y^*)\ra y^*)$ \\
$\hspace*{3.35cm}$ $=(x^*\ra y^*)\ra (y\ra y^*)=(x^*\ra y^*)\ra y^*=(x\Cap y)^*=x^*$. \\
It follows that $x=(x\ra y^*)^*$, hence $x\le_L y$. 
\end{proof}

\begin{lemma} \label{ioml-30} Let $X$ be an involutive BE algebra. 
The following are equivalent for all $x,y\in X$: \\
$(IOM)$ $x\Cap (y\ra x)=x;$ \\
$(IOM^{'})$ $x\Cap (x^*\ra y)=x;$ $\hspace*{5cm}$ \\
$(IOM^{''})$ $x\Cup (x\ra y)^*=x$. 
\end{lemma}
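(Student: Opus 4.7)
The plan is to use $(IOM')$ as a pivot. I would show $(IOM) \Leftrightarrow (IOM')$ by means of the symmetry identity of Lemma \ref{qbe-10}(5), and then $(IOM') \Leftrightarrow (IOM'')$ by means of the De Morgan-type relation between $\Cap$ and $\Cup$ from Proposition \ref{qbe-20}(3). Throughout, involutivity $x^{**}=x$ is what justifies each substitution going in both directions.

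For $(IOM) \Leftrightarrow (IOM')$, Lemma \ref{qbe-10}(5) gives $y^*\ra x = x^*\ra y$. I would substitute $y\mapsto y^*$ in $(IOM)$: the term $y\ra x$ becomes $y^*\ra x$, which equals $x^*\ra y$, so $(IOM)$ turns into $x\Cap(x^*\ra y)=x$, namely $(IOM')$. The reverse direction is the same substitution $y\mapsto y^*$ performed in $(IOM')$, valid because of involutivity.

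For $(IOM') \Leftrightarrow (IOM'')$, I would rewrite the left-hand side of $(IOM'')$ via Proposition \ref{qbe-20}(3):
\[
x\Cup (x\ra y)^* = \bigl(x^*\Cap (x\ra y)^{**}\bigr)^* = \bigl(x^*\Cap (x\ra y)\bigr)^*.
\]
Hence $(IOM'')$ is equivalent to $x^*\Cap(x\ra y)=x^*$, and substituting $x\mapsto x^*$ (again using involutivity) converts this into $x\Cap(x^*\ra y)=x$, which is exactly $(IOM')$.

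I do not expect any serious obstacle: the whole argument reduces to a pair of term-level substitutions combined with one De Morgan identity. The only point requiring attention is to apply the involutivity $x^{**}=x$ systematically whenever a universally quantified variable is transported through the operation $*$, so that each "substitution" argument genuinely yields equivalence and not merely one implication.
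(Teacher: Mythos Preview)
Your argument is correct and matches the paper's approach: the paper simply records ``The proof is straightforward,'' and what you have written is exactly the routine verification it omits. The two substitutions $y\mapsto y^*$ (using Lemma~\ref{qbe-10}(5) together with involutivity) and $x\mapsto x^*$ (using Proposition~\ref{qbe-20}(3)) are the natural way to pass between the three forms, so nothing further is needed.
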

\begin{proof}
The proof is straightforward. 
\end{proof}

\begin{definition} \label{ioml-30-10} \emph{
An \emph{implicative-orthomodular lattice} (IOML for short) is an implicative involutive BE algebra satisfying 
any one (and hence all) of the equivalent conditions of Lemma \ref{ioml-30}
}
\end{definition}

The orthomodular lattices $(X,\wedge,\vee,^{'},0,1)$ are defitionally equivalent to implicative-orthomodular 
lattices $(X,\ra,^*,1)$, by the mutually inverse transformations \\
$\hspace*{3cm}$ $\varphi:$\hspace*{0.2cm}$ x\ra y:=(x\wedge y^{'})^{'}$ $\hspace*{0.1cm}$ and  
                $\hspace*{0.1cm}$ $\psi:$\hspace*{0.2cm}$ x\wedge y:=(x\ra y^*)^*$, \\
and the relation $x\vee y:=(x^{'}\wedge y^{'})^{'}=x^*\ra y$. 

\begin{proposition} \label{ioml-40} \emph{(\cite{Ciu83})}
Let $X$ be an implicative-orthomodular lattice. Then the following hold for all $x,y,z\in X$: \\
$(1)$ $x\Cap (y\Cup x)=x$ and $x\Cup (y\Cap x)=x$. \\
If $x\le_Q y$, then: \\
$(2)$ $y\Cup x=y$ and $y^*\le_Q x^*;$ \\ 
$(3$ $y\ra z\le_Q x\ra z$ and $z\ra x\le_Q z\ra y;$ \\
$(4)$ $x\Cap z\le_Q y\Cap z$ and $x\Cup z\le_Q y\Cup z$. 
\end{proposition}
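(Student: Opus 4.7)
The plan is to reduce all four assertions to the equivalent forms of the orthomodular axiom collected in Lemma~\ref{ioml-30}, the passage from $\le_Q$ to $\le_L$ given by Lemma~\ref{iol-30-20}$(2)$, and the De~Morgan interchange of $\Cap$ and $\Cup$ from Proposition~\ref{qbe-20}$(3)$. For $(1)$, I would first prove $x\Cup(y\Cap x)=x$ by recasting $y\Cap x$ into a form to which $(IOM^{''})$ applies directly: using Lemma~\ref{qbe-10}$(3)$ and $(6)$ one rewrites $y\Cap x=((x\ra y)\ra x^{*})^{*}=(x\ra(x\ra y)^{*})^{*}$, after which $(IOM^{''})$ with auxiliary variable $(x\ra y)^{*}$ yields the identity. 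The companion $x\Cap(y\Cup x)=x$ then follows by applying what we just proved at $x^{*},y^{*}$ and taking $^{*}$, with Proposition~\ref{qbe-20}$(3)$ supplying the De~Morgan bookkeeping.

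For $(2)$, Lemma~\ref{iol-30-20}$(2)$ upgrades $x\le_Q y$ to $x\le_L y$, which by Lemma~\ref{qbe-10}$(3)$ reads $y\ra x^{*}=x^{*}$, so $(y\ra x^{*})^{*}=x$. Then $(IOM^{''})$ with $a=y$, $b=x^{*}$ produces $y\Cup x=y$, and one De~Morgan step gives $y^{*}\Cap x^{*}=(y\Cup x)^{*}=y^{*}$, i.e.\ $y^{*}\le_Q x^{*}$.

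The main obstacle is $(3)$. I would invoke the form $(IOM)$: $a\Cap(b\ra a)=a$, with the work being to realize the larger element of each pair as $b\ra a$. For $z\ra x\le_Q z\ra y$ the choice $b=y^{*}$ does it: Lemma~\ref{iol-30-20}$(1)$ applied to $x\le_L y$ gives $y^{*}\le_L x^{*}$, unfolding to $x^{*}\ra y=y$, so $y^{*}\ra(z\ra x)=z\ra(y^{*}\ra x)=z\ra(x^{*}\ra y)=z\ra y$ by $(BE_{4})$ and Lemma~\ref{qbe-10}$(5)$. The antimonotone half $y\ra z\le_Q x\ra z$ is the delicate point: taking $b=x$ in $(IOM)$ reduces the claim to the auxiliary identity $y\ra(x\ra z)=x\ra z$. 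To establish this, I would observe that $(x\ra z)^{*}\le_L x$ is merely a reformulation of $(Pimpl)$ from Lemma~\ref{iol-30}; transitivity of $\le_L$ (Lemma~\ref{qbe-20-10}$(3)$), applied together with $x\le_L y$, then promotes this to $(x\ra z)^{*}\le_L y$, which unfolds precisely to $y\ra(x\ra z)=x\ra z$. Combined with $(BE_{4})$, this yields $x\ra(y\ra z)=x\ra z$, and $(IOM)$ closes the case.

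Finally, $(4)$ is a bookkeeping consequence of $(3)$ and $(2)$. Applying $(3)$ once to $x\le_Q y$ gives $y\ra z\le_Q x\ra z$, and a second application of $(3)$ to this inequality with the same $z$ yields $(x\ra z)\ra z\le_Q(y\ra z)\ra z$, i.e.\ $x\Cup z\le_Q y\Cup z$. The meet version $x\Cap z\le_Q y\Cap z$ follows by running the join version on $y^{*}\le_Q x^{*}$ (from $(2)$) and $z^{*}$, and then dualizing via Proposition~\ref{qbe-20}$(3)$ together with $(2)$.
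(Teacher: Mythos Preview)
The paper does not supply its own proof of this proposition: it is quoted verbatim from \cite{Ciu83} (note the citation attached to the statement), so there is no in-paper argument to compare against. Assessed on its own merits, your proof is correct. The reductions in $(1)$ and $(2)$ to the axioms $(IOM^{''})$ and $(IOM)$ via the rewriting $y\Cap x=(x\ra(x\ra y)^{*})^{*}$ and the passage $\le_Q\Rightarrow\le_L$ (Lemma~\ref{iol-30-20}$(2)$) are clean and accurate. In $(3)$, the delicate antimonotone half is handled correctly: the observation that $(x\ra z)^{*}\le_L x$ is exactly $(Pimpl)$, and chaining this with $x\le_L y$ via the transitivity of $\le_L$ (Lemma~\ref{qbe-20-10}$(3)$) to obtain $y\ra(x\ra z)=x\ra z$ is a nice maneuver. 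The derivation of $(4)$ by two successive applications of $(3)$ followed by De~Morgan dualization through $(2)$ is also sound. Every cited lemma is used in a way consistent with its statement in the paper.
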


\begin{proposition} \label{ioml-50} \emph{(\cite{Ciu83})}
Let $X$ be an implicative-orthomodular lattice. Then the following hold for all $x,y,z\in X$: \\
$(1)$ $x\ra (y\Cap x)=x\ra y;$ \\  
$(2)$ $(x\Cup y)\ra (x\ra y)^*=y^*;$ \\ 
$(3)$ $x\Cap ((y\ra x)\Cap (z\ra x))=x;$ \\
$(4)$ $(x\ra y)\ra (y\Cap x)=x;$ \\
$(5)$ $\le_Q$ is an order relation on $X;$ \\
$(6)$ $x\le_Q y$ and $x\le y$ imply $x=y;$ \\
$(7)$ $x\Cap y\le_Q y\le_Q x\Cup y$.       
\end{proposition}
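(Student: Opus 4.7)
My plan is to dispatch the seven parts in the order listed, drawing on the $(IOM)$ family of Lemma \ref{ioml-30}, the involution identities of Lemma \ref{qbe-10}, and the monotonicity statements of Proposition \ref{ioml-40}.

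For $(1)$, I rewrite $y\Cap x=(x\ra(x\ra y)^*)^*$ by applying Lemma \ref{qbe-10}$(6)$ inside the definition of $\Cap$; two uses of Lemma \ref{qbe-10}$(3)$ then turn $x\ra(y\Cap x)$ into $((x\ra y)\ra x^*)\ra x^*=(x\ra y)\Cup x^*$. Since $(Impl)$ gives $(x\ra y)\ra x=x$, the instance of $(IOM^{''})$ with $x\ra y$ in place of $x$ collapses this to $x\ra y$. Part $(4)$ uses the same expansion: setting $u:=x\ra y$, I find $(x\ra y)\ra(y\Cap x)=(x\ra u^*)\ra u^*=x\Cup(x\ra y)^*$, which equals $x$ by $(IOM^{''})$ directly. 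For $(2)$, the key observation is that $(IOM)$ with $x,y$ interchanged gives $y\Cap(x\ra y)=y$; taking $^*$ and applying Proposition \ref{qbe-20}$(3)$ produces $y^*\Cup(x\ra y)^*=y^*$, and unfolding $\Cup$ while using Lemma \ref{qbe-10}$(3)$ to rewrite $y^*\ra(x\ra y)^*=(x\ra y)\ra y=x\Cup y$ completes the computation.

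Part $(3)$ is a monotonicity argument: $(IOM)$ supplies both $x\le_Q y\ra x$ and $x=x\Cap(z\ra x)$, so Proposition \ref{ioml-40}$(4)$ with meet-argument $z\ra x$ yields $x\le_Q(y\ra x)\Cap(z\ra x)$. For $(7)$, the inequality $y\le_Q x\Cup y$ is $(IOM)$ applied with $x\mapsto y$ and $y\mapsto x\ra y$; for $x\Cap y\le_Q y$, I set $a:=x\Cap y$ and write $a^*=u\ra y^*$ with $u:=x^*\ra y^*$, observe that $y^*\le_Q u$ is itself an instance of $(IOM)$, and use Proposition \ref{ioml-40}$(2)$ to collapse $a^*\ra y^*=u\Cup y^*$ back to $u$; it follows that $a\Cap y=a$. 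In $(5)$, reflexivity and antisymmetry are Proposition \ref{qbe-20}$(2)$; for transitivity I chain $\le_Q\Rightarrow\le_L$ (Lemma \ref{iol-30-20}$(2)$), transitivity of $\le_L$ (Lemma \ref{qbe-20-10}$(3)$), and $\le_L\Rightarrow\le$ (Lemma \ref{qbe-20-10}$(2)$), together with a converse $\le\Rightarrow\le_Q$ valid inside any IOML: $x\ra y=1$ forces $x\Cup y=y$ by $(BE_3)$, so $x^*\Cap y^*=y^*$ by Proposition \ref{qbe-20}$(3)$, hence $y^*\le_Q x^*$ and, by Proposition \ref{ioml-40}$(2)$ applied in reverse, $x\le_Q y$. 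Part $(6)$ then follows from antisymmetry of $\le$ combined with Proposition \ref{qbe-20}$(4)$ ($x\le_Q y$ implies $x\le y$).

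The main obstacle I anticipate is the transitivity half of $(5)$: producing $\le\Rightarrow\le_Q$ without circularity requires the collapse $x\Cup y=y$ under $x\le y$ and the involutive duality of Proposition \ref{qbe-20}$(3)$, which is what ultimately identifies $\le_Q$, $\le_L$ and $\le$ inside an IOML. A secondary delicate step is $x\Cap y\le_Q y$ in $(7)$: one must recognise $y^*\le_Q x^*\ra y^*$ as a direct instance of $(IOM)$ in order to activate the $\Cup$-absorption of Proposition \ref{ioml-40}$(2)$; once that is in hand, the remaining identities $(1)$--$(4)$ are essentially routine unfoldings.
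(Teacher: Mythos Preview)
Your arguments for parts $(1)$--$(4)$ and $(7)$ are correct. The gap is in $(5)$ and, as a consequence, in $(6)$.

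For transitivity in $(5)$ you assert that in any IOML the implication $\le\Rightarrow\le_Q$ holds: from $x\ra y=1$ you get $x\Cup y=y$, hence $x^*\Cap y^*=y^*$, and then you conclude $y^*\le_Q x^*$. This last step is wrong. The relation $\le_Q$ is defined by $a\le_Q b\iff a=a\Cap b$, so $y^*\le_Q x^*$ would require $y^*\Cap x^*=y^*$, not $x^*\Cap y^*=y^*$; and $\Cap$ is \emph{not} commutative in an IOML in general. Concretely, in the IOML corresponding to the orthomodular lattice $MO_2$ (atoms $a,b,a',b'$), one has $a\ra b=(a\wedge b')'=0'=1$, so $a\le b$; but $a\Cap b=(a\vee b')\wedge b=1\wedge b=b\ne a$, so $a\not\le_Q b$. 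Thus $\le\Rightarrow\le_Q$ fails, and with it your transitivity argument. The same example shows that $\le$ is not antisymmetric ($a\ra b=b\ra a=1$ yet $a\ne b$), so your proof of $(6)$, which invokes ``antisymmetry of $\le$'', also collapses.

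The repair for $(5)$ is already implicit in the tools you cite: go $\le_Q\Rightarrow\le_L$ (Lemma~\ref{iol-30-20}$(2)$), use transitivity of $\le_L$ (Lemma~\ref{qbe-20-10}$(3)$), and then close with $\le_L\Rightarrow\le_Q$ directly---this last implication is exactly what part $(1)$ (which you have already proved) buys you, via the computation later recorded as Theorem~\ref{dioml-05-50}$(a)\Rightarrow(b)$. Do not pass through $\le$. For $(6)$ (whose hypothesis should read $y\le x$, as its later use in Proposition~\ref{dioml-110} confirms), the clean argument is: $x\le_Q y$ gives $y=y\Cup x$ by Proposition~\ref{ioml-40}$(2)$, and $y\Cup x=(y\ra x)\ra x=1\ra x=x$.
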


\begin{lemma} \label{ioml-60-10} Let $X$ be an implicative-orthomodular lattice. Then the following hold for all $x,y,z,x_1,x_2,y_1,y_2\in X$: \\
$(1)$ $x\le_Q y$ implies $(x\ra z^*)^*\le_Q (y\ra z^*)^*;$ \\
$(2)$ $y\le_Q x$ and $z\le_Q x$ imply $y^*\ra z\le_Q x;$ \\
$(3)$ $x_1\le_Q y_1$, $x_2\le_Q y_2$ imply $x_1^*\ra x_2\le_Q y_1^*\ra y_2;$ \\
%$(4)$ $(x\ra y)^*\le_Q x;$ \\
$(4)$ $(x\ra y^*)\ra (x\ra y)^*\le_Q x;$ \\
$(5)$ $(z\ra (z\ra x^*)^*)^*\le_Q z^*\ra x$.  
\end{lemma}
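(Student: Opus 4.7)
The plan is to derive parts (1)--(3) from the monotonicity of $\ra$ under $\le_Q$ already recorded in Proposition \ref{ioml-40}, to reduce (4) to a substitution into Proposition \ref{ioml-50}(4), and to obtain (5) from two one-line consequences of the orthomodular axioms.

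For (1), Proposition \ref{ioml-40}(3) with $z$ replaced by $z^*$ yields $y \ra z^* \le_Q x \ra z^*$, and Proposition \ref{ioml-40}(2) then gives $(x \ra z^*)^* \le_Q (y \ra z^*)^*$. For (2), Proposition \ref{ioml-40}(2) turns $y \le_Q x$ into $x^* \le_Q y^*$, so the contravariant half of Proposition \ref{ioml-40}(3) yields $y^* \ra z \le_Q x^* \ra z$; the covariant half applied to $z \le_Q x$ yields $x^* \ra z \le_Q x^* \ra x$; axiom $(iG)$ of Lemma \ref{iol-30} identifies $x^* \ra x$ with $x$; and transitivity of $\le_Q$ (part of Proposition \ref{ioml-50}(5)) chains these into $y^* \ra z \le_Q x$. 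The same device proves (3): $x_1 \le_Q y_1$ combined with Proposition \ref{ioml-40}(2)--(3) gives $x_1^* \ra x_2 \le_Q y_1^* \ra x_2$, and $x_2 \le_Q y_2$ with the covariant half of Proposition \ref{ioml-40}(3) gives $y_1^* \ra x_2 \le_Q y_1^* \ra y_2$; transitivity then closes the chain.

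For (4), substituting $y$ by $y^*$ in Proposition \ref{ioml-50}(4) gives the identity $(x \ra y^*) \ra (y^* \Cap x) = x$. Because $\ra$ is monotone in its second argument (Proposition \ref{ioml-40}(3)), it suffices to prove the auxiliary inequality $(x \ra y)^* \le_Q y^* \Cap x$, which I will do by establishing $(x \ra y)^* \le_Q x$ and $(x \ra y)^* \le_Q y^*$ separately. For $(x \ra y)^* \le_Q x$: $(IOM^{''})$ of Lemma \ref{ioml-30} gives $x \Cup (x \ra y)^* = x$, while Proposition \ref{ioml-50}(7) gives $(x \ra y)^* \le_Q x \Cup (x \ra y)^*$, so $(x \ra y)^* \le_Q x$. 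For $(x \ra y)^* \le_Q y^*$: unfolding the definition of $\Cap$ gives $(x \ra y)^* \Cap y^* = \bigl(((x \ra y) \ra y) \ra y\bigr)^*$, and $((x \ra y) \ra y) \ra y = x \ra y$ because $(IOM)$ (with its variables renamed) yields $y \le_Q x \ra y$, whence Proposition \ref{ioml-40}(2) gives $(x \ra y) \Cup y = x \ra y$. With both inequalities in hand, Proposition \ref{ioml-40}(4) applied with $z = x$ gives $(x \ra y)^* \Cap x \le_Q y^* \Cap x$, and the first inequality collapses the left-hand side to $(x \ra y)^*$, producing $(x \ra y)^* \le_Q y^* \Cap x$ and completing (4).

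For (5), I apply the orthomodular axioms twice. $(IOM^{''})$ with $x \leftarrow z$ and $y \leftarrow (z \ra x^*)^*$ yields $z \Cup (z \ra (z \ra x^*)^*)^* = z$, and together with Proposition \ref{ioml-50}(7) this gives $(z \ra (z \ra x^*)^*)^* \le_Q z$; $(IOM^{'})$ with $x \leftarrow z$ and $y \leftarrow x$ yields $z \Cap (z^* \ra x) = z$, i.e., $z \le_Q z^* \ra x$. Transitivity of $\le_Q$ then delivers (5). The main obstacle is (4), where one has to find the right target identity; once the substitution $y \mapsto y^*$ in Proposition \ref{ioml-50}(4) is recognized, the remaining work is bookkeeping with the monotonicity properties of $\ra$, $\Cap$, and $\Cup$ under $\le_Q$.
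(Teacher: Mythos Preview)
Your arguments for parts (1)--(3) and (5) are correct and match the paper's proofs essentially line for line; the paper phrases (5) as $x^*\Cap z\le_Q z\le_Q z^*\ra x$ and then rewrites $x^*\Cap z$ as $(z\ra(z\ra x^*)^*)^*$, which is the same content as your two substitutions into $(IOM^{''})$ and $(IOM^{'})$.

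For part (4), however, your route is a genuine detour compared with the paper. The paper observes that $(IOM^{''})$ gives both $(x\ra y^*)^*\le_Q x$ and $(x\ra y)^*\le_Q x$, and then simply applies the just-proved part~(2) with $y:=(x\ra y^*)^*$ and $z:=(x\ra y)^*$ to conclude $(x\ra y^*)\ra(x\ra y)^*\le_Q x$ in one stroke. Your argument instead targets the identity $(x\ra y^*)\ra(y^*\Cap x)=x$ from Proposition~\ref{ioml-50}(4), and then spends effort proving the auxiliary inequality $(x\ra y)^*\le_Q y^*\Cap x$ via separate bounds $(x\ra y)^*\le_Q x$ and $(x\ra y)^*\le_Q y^*$, combined through Proposition~\ref{ioml-40}(4). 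This is correct, but it is longer and, more to the point, misses the internal reuse that the lemma is set up for: part~(2) is exactly the ``join-under-a-bound'' tool you need here, and the paper exploits it. What your approach does buy is an explicit proof that $(x\ra y)^*\le_Q y^*\Cap x$, which is a pleasant fact in its own right, but for the stated goal the paper's two-line application of~(2) is preferable.
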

\begin{proof}
$(1)$ $x\le_Q y$ implies $y\ra z^*\le_Q x\ra z^*$, hence $(x\ra z^*)^*\le_Q (y\ra z^*)^*$. \\
$(2)$ By hypothesis and $(iG)$, we get: $y^*\ge_Q x^*$, so that $y^*\ra z\le_Q x^*\ra z\le_Q x^*\ra x=x$. \\
$(3)$ By hypothesis, we have: $x_1^*\ge_Q y_1^*$, hence $x_1^*\ra x_2\le_Q y_1^*\ra x_2\le_Q y_1^*\ra y_2$. \\ 
$(4)$ By $(IOM^{'})$, $(x\ra y^*)^*\le_Q x$ and $(x\ra y)^*\le_Q x$, and applying $(2)$, it follows that 
$(x\ra y^*)\ra (x\ra y)^*\le_Q x$. \\
$(5)$ We have $x^*\Cap z\le_Q z\le_Q z^*\ra x$. 
Since $x^*\Cap z=(x\Cup z^*)^*=((x\ra z^*)\ra z^*)^*=(z\ra (z\ra x^*)^*)^*$, we get 
$(z\ra (z\ra x^*)^*)^*\le_Q z\le_Q z^*\ra x$.
\end{proof}

\begin{theorem} \label{ioml-90-10} An implicative involutive BE algebra $X$. The following are equivalent: \\
$(a)$ $X$ is an implicative-orthomodular lattice; \\
$(b)$ $X$ satisfies condition $(QW_1);$ \\ 
$(c)$ $X$ satisfies condition $(QW_2);$ \\ 
$(d)$ $X$ satisfies condition $(QW)$. 
\end{theorem}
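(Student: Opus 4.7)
The plan is to reduce the theorem to showing $(a)\Leftrightarrow (b)$, since the equivalences among $(b)$, $(c)$, and $(d)$ are already established in \cite{Ciu83} in the more general involutive BE algebra setting and those proofs transfer verbatim to the implicative situation considered here.

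For $(a)\Rightarrow (b)$, assume $X$ is an implicative-orthomodular lattice and aim at $(QW_1)$: $x\ra (x\Cap y)=x\ra y$. I would first rewrite, using Proposition \ref{qbe-20}$(3)$ and Lemma \ref{qbe-10}$(3)$,
$$x\ra (x\Cap y)=x\ra (x^*\Cup y^*)^*=(x^*\Cup y^*)\ra x^*.$$
Next, unfold $x^*\Cup y^*=(x^*\ra y^*)\ra y^*=(y\ra x)\ra y^*$ via Lemma \ref{qbe-10}$(6)$, and then apply a chain of involutive-BE identities from Lemma \ref{qbe-10}$(5)$--$(8)$ together with $(Impl)$. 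In particular, the absorption-type identity $(Impl)$, in the form $(y\ra x)\ra y=y$, should allow the nested implication to collapse to $y^*\ra x^*$, which equals $x\ra y$ by Lemma \ref{qbe-10}$(6)$.

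For $(b)\Rightarrow (a)$, assume $(QW_1)$ and aim at $(IOM)$: $x\Cap (y\ra x)=x$. Substituting $y\mapsto y\ra x$ in $(QW_1)$ yields
$$x\ra (x\Cap (y\ra x))=x\ra (y\ra x)=1$$
by Lemma \ref{qbe-10}$(1)$, so $x\le x\Cap (y\ra x)$. To upgrade this to the equality required by $(IOM)$, I would compute $x\Cap (y\ra x)=((x^*\ra (y\ra x)^*)\ra (y\ra x)^*)^*$ directly, use Lemma \ref{qbe-10}$(3)$ to turn $x^*\ra (y\ra x)^*$ into $(y\ra x)\ra x$, and then apply $(Impl)$ to simplify. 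Alternatively, one may establish $x\Cap (y\ra x)\le_Q x$ from the general theory and conclude via the cancellation law (Proposition \ref{qbe-20}$(5)$) together with antisymmetry of $\le_Q$.

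The principal obstacle I anticipate is the non-commutativity of $\Cap$: expressions such as $x\Cap y$ and $y\Cap x$ are genuinely distinct, and the statements $(QW_1)$ and Proposition \ref{ioml-50}$(1)$ differ precisely in the order of arguments inside $\Cap$. Consequently, careful bookkeeping of which argument of $\Cap$ plays the role of the ``target'' is required at every step, and the correct sequencing of Lemma \ref{qbe-10}$(3)$--$(8)$ with $(Impl)$ is what makes the manipulations close.
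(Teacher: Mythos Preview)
Your reduction to a single equivalence plus the citation of \cite{Ciu83} for $(b)\Leftrightarrow(c)\Leftrightarrow(d)$ is a sound global strategy, and it is exactly what the paper does too; the paper, however, chooses to prove $(a)\Leftrightarrow(c)$ rather than $(a)\Leftrightarrow(b)$, and the reason is that both of your direct arguments, as sketched, do not close.

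For $(b)\Rightarrow(a)$: after the substitution you correctly obtain $x\le x\Cap(y\ra x)$, but recall that in implicative involutive BE algebras the relation $\le$ is neither transitive nor antisymmetric (Lemma \ref{qbe-20-10}$(1)$; concretely, in $\mathbb{O}_6$ one has $x\le y$ and $y\le x$ with $x\neq y$). So this inequality alone cannot give equality. Your first alternative, ``compute $x\Cap(y\ra x)$ directly \ldots\ and then apply $(Impl)$'', never uses $(QW_1)$ and therefore cannot succeed: $\mathbb{O}_6$ is an implicative involutive BE algebra in which $(IOM)$ fails. Your second alternative needs $x\Cap(y\ra x)\le_Q x$, but this is also false in $\mathbb{O}_6$ (take $a=x$, $b=y^*$: then $a\Cap(b\ra a)=x\Cap y=y\nleq_Q x$), so it would have to be derived from $(QW_1)$ by an argument you have not indicated. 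The paper avoids all of this because $(QW_2)$ is a richer identity: substituting $y:=0$ into $(QW_2)$ yields $(IOM^{'})$ in one line, with no appeal to antisymmetry.

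For $(a)\Rightarrow(b)$: your chain leads to $((y\ra x)\ra y^*)\ra x^*$, but the $(Impl)$ instance you cite is $(y\ra x)\ra y=y$, whereas the expression contains $(y\ra x)\ra y^*$. Collapsing that term to $y^*$ would amount to $y\ra(y\ra x)^*=y\ra x^*$, which is precisely $(Idiv)$ and does \emph{not} hold in a general IOML (Theorem \ref{dioml-120}). Proposition \ref{ioml-50}$(1)$ gives only $x\ra(y\Cap x)=x\ra y$, with the arguments of $\Cap$ in the opposite order from $(QW_1)$, so the obstacle you flag in your last paragraph is real and is not overcome by the manipulations you list. The paper's $(a)\Rightarrow(c)$ instead uses the three-variable absorption identity of Proposition \ref{ioml-50}$(3)$ together with Proposition \ref{qbe-20}$(6)$ and Proposition \ref{ioml-50}$(2)$; the extra variable in $(QW_2)$ is what makes that computation go through.
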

\begin{proof} 
$(a)\Rightarrow (c)$ Assume that $X$ is an implicative-orthomodular lattice. 
By Proposition \ref{ioml-50}$(3)$, we have $x^*\Cup ((y\ra x)^*\Cup (z\ra x)^*))=x^*$. 
Replacing $x, y, z$ with $x^*, y^*, z^*$, respectively, we get $x\Cup ((y^*\ra x^*)^*\Cup (z^*\ra x^*))=x$. 
It follows that: \\
$\hspace*{1.00cm}$ $x\ra (y\Cap (z\Cap x))=x\ra (y\Cap (z^*\Cup x^*)^*)$ \\
$\hspace*{4.00cm}$ $=x\ra (y^*\Cup (z^*\Cup x^*))^*$ \\ 
$\hspace*{4.00cm}$ $=x\ra (y^*\Cup ((z^*\ra x^*)\ra x^*))^*$ \\
$\hspace*{4.00cm}$ $=x\ra (y^*\Cup (x\ra (z^*\ra x^*)^*))^*$ \\
$\hspace*{4.00cm}$ $=x\ra (x\ra ((y^*\ra x^*)^*\Cup (z^*\ra x^*)^*))^*$ (Prop. \ref{qbe-20}$(6)$) \\
$\hspace*{4.00cm}$ $=(x\Cup ((y^*\ra x^*)^*\Cup (z^*\ra x^*)^*))\ra (x\ra ((y^*\ra x^*)^*\Cup (z^*\ra x^*)^*))^*$ \
$\hspace*{4.00cm}$ $=((y^*\ra x^*)^*\Cup (z^*\ra x^*)^*)^*$ (Prop. \ref{ioml-50}$(2)$) \\
$\hspace*{4.00cm}$ $=(y^*\ra x^*)\Cap (z^*\ra x^*)=(x\ra y)\Cap (x\ra z)$. \\
Hence axiom $(QW_2)$ is satisfied. \\
$(c)\Rightarrow (a)$ Replacing $y:=0$ in $(QW_2)$, we have $x^*=x^*\Cap (x\ra z)$, and replacing $x$ with $x^*$ and $z$ with $y$, we get $x=x\Cap (x^*\ra y)$. It follows that $X$ satisfies condition $(IOM^{'}$), hence it is 
an implicative-orthomodular lattice. \\
$(c)\Rightarrow (b)$ and $(c)\Rightarrow (d)$ According to \cite{Ciu83}, if $X$ is an implicative involutive BE algebra, then axioms $(QW_1)$, $(QW_2)$, $(QW)$ are equivalent.  
\end{proof}

\begin{corollary} \label{ioml-90-20} Any implicative-orthomodular lattice is a quantum-Wajsberg algebra. 
\end{corollary}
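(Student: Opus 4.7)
The plan is to observe that this corollary is an immediate consequence of Theorem \ref{ioml-90-10}, so essentially no new work is required. A quantum-Wajsberg algebra, as recalled at the end of Section 2, is by definition an involutive BE algebra $(X,\ra,^*,1)$ that satisfies the identity $(QW)$. An implicative-orthomodular lattice is, by Definition \ref{ioml-30-10}, an implicative involutive BE algebra satisfying any one of the equivalent conditions $(IOM)$, $(IOM^{'})$, $(IOM^{''})$; in particular it is an involutive BE algebra, so the only thing left to verify is the identity $(QW)$.

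The key step is then just to invoke the equivalence $(a)\Leftrightarrow(d)$ of Theorem \ref{ioml-90-10}: if $X$ is an implicative-orthomodular lattice, then $X$ satisfies $(QW)$. Combining these two observations yields that $X$ is a quantum-Wajsberg algebra. I would state the proof in essentially one sentence, citing Theorem \ref{ioml-90-10} and the definition of a quantum-Wajsberg algebra from Section 2.

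There is no real obstacle here; the substantive content lies entirely in Theorem \ref{ioml-90-10}, whose proof has already been carried out using Proposition \ref{ioml-50}$(2)$--$(3)$ and Proposition \ref{qbe-20}$(6)$. The corollary is merely a convenient packaging of the implication $(a)\Rightarrow(d)$ of that theorem, reframing it in terms of the previously defined class of quantum-Wajsberg algebras rather than in terms of the axiom $(QW)$ itself.
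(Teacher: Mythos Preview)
Your proposal is correct and matches the paper's approach exactly: the corollary is stated without proof in the paper, as it is an immediate consequence of Theorem~\ref{ioml-90-10} (the implication $(a)\Rightarrow(d)$) together with the definition of a quantum-Wajsberg algebra. Your one-sentence justification is precisely what is intended.
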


\begin{proposition} \label{ioml-70} Let $X$ be an implicative-orthomodular lattice. 
The following hold for all $x,y\in X$: \\ 
$(1)$ $(x\Cap y)\ra (y\Cap x)=1;$ \\
$(2)$ $(x\Cup y)\ra (y\Cup x)=1$.  
\end{proposition}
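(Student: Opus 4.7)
The plan is to prove part $(2)$ first and then deduce $(1)$ by duality under the involution $^*$. Expanding the definition of $\Cup$,
\[
(x\Cup y)\ra (y\Cup x)=((x\ra y)\ra y)\ra ((y\ra x)\ra x),
\]
and one application of $(BE_4)$ pulls $y\ra x$ to the front, so the left-hand side rewrites as $(y\ra x)\ra (((x\ra y)\ra y)\ra x)$. It therefore suffices to establish the auxiliary identity $((x\ra y)\ra y)\ra x=y\ra x$; once that is in hand, $(BE_1)$ finishes the argument.

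For this auxiliary identity I would chain together four single-step manipulations:
\[
((x\ra y)\ra y)\ra x \;=\; x^*\ra ((x\ra y)\ra y)^* \;=\; x^*\ra (x^*\Cap y^*) \;=\; x^*\ra y^* \;=\; y\ra x.
\]
The first equality is Lemma \ref{qbe-10}$(6)$ in the form $a\ra b=b^*\ra a^*$; the second recognises $(x\ra y)\ra y = x\Cup y$ and invokes Proposition \ref{qbe-20}$(3)$ in the form $(x\Cup y)^*=x^*\Cap y^*$; the third is $(QW_1)$, available in any implicative-orthomodular lattice by Theorem \ref{ioml-90-10}; and the fourth is Lemma \ref{qbe-10}$(6)$ again.

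Part $(1)$ then drops out quickly: Lemma \ref{qbe-10}$(6)$ rewrites $(x\Cap y)\ra (y\Cap x)$ as $(y\Cap x)^*\ra (x\Cap y)^*$, and Proposition \ref{qbe-20}$(3)$ turns this into $(y^*\Cup x^*)\ra (x^*\Cup y^*)$, which equals $1$ by part $(2)$ applied with $y^*$ and $x^*$ in place of $x$ and $y$. The only slightly non-mechanical step in the whole argument is spotting that $(x\Cup y)^*=x^*\Cap y^*$ is the bridge that converts a $\Cup$-expression into the shape to which $(QW_1)$ applies; once that move is in view, everything else is a one-call invocation of a previously labelled identity.
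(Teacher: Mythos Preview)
Your proof is correct and uses essentially the same approach as the paper: both rely on Theorem~\ref{ioml-90-10} to obtain $(QW_1)$, then combine it with the involution/duality identities of Lemma~\ref{qbe-10}(6) and Proposition~\ref{qbe-20}(3), and a single application of $(BE_4)$. The only cosmetic difference is that the paper proves $(1)$ first and derives $(2)$ by duality, while you do the reverse and single out the intermediate identity $(x\Cup y)\ra x = y\ra x$ (which is precisely the dual form of $(QW_1)$) as an explicit lemma.
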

\begin{proof}
According to Theorem \ref{ioml-90-10}, $X$ satisfies condition $(QW_1)$, and we have: \\
$\hspace*{1.00cm}$ $(x\Cap y)\ra (y\Cap x)=(y\Cap x)^*\ra (x\Cap y)^*=(y\Cap x)^*\ra ((x^*\ra y^*)\ra y^*)$ \\ 
$\hspace*{4.00cm}$ $=(x^*\ra y^*)\ra ((y\Cap x)^*\ra y^*)=(y\ra x)\ra (y\ra (y\Cap x))$ \\
$\hspace*{4.00cm}$ $=(y\ra x)\ra (y\ra x)=1$. \\
$(2)$ Using $(1)$, we get: \\
$\hspace*{1.00cm}$ $(x\Cup y)\ra (y\Cup x)=(x^*\Cap y^*)^*\ra (y^*\Cap x^*)^*=(y^*\Cap x^*)\ra (x^*\Cap y^*)=1$.    
\end{proof}

\begin{theorem} \label{dioml-05-50} Let $X$ be an implicative involutive BE algebra. 
The following are equivalent: \\
$(a)$ $X$ is an implicative-orthomodular lattice; \\
$(b)$ $\le_L$ implies $\le_Q;$ \\
$(c)$ for $x,y\in X$, $x\le_L y$ implies $y=y\Cup x$.  
\end{theorem}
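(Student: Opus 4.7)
The plan is to prove the cycle $(a)\Rightarrow(c)\Rightarrow(b)\Rightarrow(a)$.

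For $(a)\Rightarrow(c)$, I would invoke condition $(IOM'')$ from Lemma \ref{ioml-30}, namely $y\Cup (y\ra z)^*=y$, specialized to $z:=x^*$. Since $x\le_L y$ is equivalent to $x^*=y\ra x^*$, the term $(y\ra x^*)^*$ collapses to $x^{**}=x$, and the identity immediately yields $y=y\Cup x$.

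For $(c)\Rightarrow(b)$, the key idea is to dualize. By Lemma \ref{iol-30-20}$(1)$, the hypothesis $x\le_L y$ is equivalent to $y^*\le_L x^*$, so I would apply $(c)$ to the pair $(y^*,x^*)$, obtaining $x^*=(x^*\ra y^*)\ra y^* = x^*\Cup y^*$. Taking duals and using Proposition \ref{qbe-20}$(3)$, this rewrites as $x=(x^*\Cup y^*)^*=x\Cap y$, which is exactly $x\le_Q y$.

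For $(b)\Rightarrow(a)$, I would first establish that the relation $x\le_L(y\ra x)$ holds unconditionally in any implicative involutive BE algebra. This reduces to verifying $(y\ra x)\ra x^*=x^*$, which follows by rewriting $y\ra x=x^*\ra y^*$ via Lemma \ref{qbe-10}$(6)$ and then applying $(Impl)$ to obtain $(x^*\ra y^*)\ra x^*=x^*$; Lemma \ref{qbe-10}$(3)$ converts this into the definition of $\le_L$. With this in hand, $(b)$ upgrades $x\le_L(y\ra x)$ to $x\le_Q(y\ra x)$, that is, $x\Cap(y\ra x)=x$, which is condition $(IOM)$; hence $X$ is an implicative-orthomodular lattice by Lemma \ref{ioml-30} and Definition \ref{ioml-30-10}.

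The trickiest step is $(c)\Rightarrow(b)$: the non-obvious move is to instantiate the hypothesis at the dual pair $(y^*,x^*)$ rather than at $(x,y)$. Once this substitution is spotted, the conclusion of $(c)$ matches the very definition of $x\Cap y$ almost verbatim, and the rest is one application of Proposition \ref{qbe-20}$(3)$.
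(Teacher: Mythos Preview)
Your argument is correct. The implications $(b)\Rightarrow(a)$ and $(c)\Rightarrow(b)$ match the paper's proofs essentially verbatim. The difference lies in how you leave $(a)$: the paper proves $(a)\Rightarrow(b)$ by a direct computation of $x\Cap y$ that invokes Proposition~\ref{ioml-50}$(1)$ (an auxiliary identity of IOMLs), and then deduces $(b)\Rightarrow(c)$ by dualization; you instead prove $(a)\Rightarrow(c)$ in one line by specializing the defining condition $(IOM'')$ at $z:=x^*$ and using $x\le_L y\Leftrightarrow y\ra x^*=x^*$. Your route is shorter and relies only on the defining axioms (Lemma~\ref{ioml-30}), avoiding the appeal to Proposition~\ref{ioml-50}; the paper's route, on the other hand, produces the implication $(a)\Rightarrow(b)$ directly, which is the form most often cited later in the paper.
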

\begin{proof}
$(a) \Rightarrow (b)$ Let $x,y\in X$ such that $x\le_L y$, that is $y\ra x^*=x^*$. 
Then we get: \\
$\hspace*{2.00cm}$ $x\Cap y=((x^*\ra y^*)\ra y^*)^*=(((y\ra x^*)\ra y^*)\ra y^*)^*$ \\
$\hspace*{2.95cm}$ $=(((x\ra y^*)\ra y^*)\ra y^*)^*=((x\Cup y^*)\ra y^*)^*$ \\
$\hspace*{2.95cm}$ $=(y\ra (x\Cup y^*)^*)^*=(y\ra (x^*\Cap y))^*=(y\ra x^*)^*$  (Prop. \ref{ioml-50}$(1)$) \\ 
$\hspace*{2.95cm}$ $=(x\ra y^*)^*=x$. \\
Thus $x=x\Cap y$, that is $x\le_Q y$. \\ 
$(b) \Rightarrow (a)$ Let $x,y\in X$ such that $x\le_L y$ implies $x\le_Q y$. \\
Since by $(Impl)$, $(x\ra (y\ra x)^*)^*=((y\ra x)\ra x^*)^*=((x^*\ra y^*)\ra x^*)^*=(x^*)^*=x$, it follows that 
$x\le_L y\ra x$. Hence $x\le_Q y\ra x$, that is $x\Cap (y\ra x)=x$, so that $X$ verifies condition $(IOM)$. 
Thus $X$ is an implicative-orthomodular lattice. \\  
$(b) \Rightarrow (c)$ Let $x,y\in X$ such that $x\le_L y$. It follows that $y^*\le_L x^*$, and by $(b)$, 
$y^*\le_Q x^*$, so that $y^*=y^*\Cap x^*$, that is $y=y\Cup x$. \\
$(c) \Rightarrow (b)$ If $x\le_L y$, then $y^*\le_L x^*$, so that, by $(c)$, $x^*=x^*\Cup y^*$. 
Hence $x=x\Cap y$, that is $x\le_Q y$. 
\end{proof}

\begin{corollary} \label{dioml-10-20} If $X$ is an implicative-orthomodular lattice, then $\le_Q=\le_L$. 
\end{corollary}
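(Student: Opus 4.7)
The plan is to observe that the corollary is an immediate consequence of two results already established in the paper, which together give the two inclusions between the relations.

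First, I would invoke Lemma \ref{iol-30-20}$(2)$, which asserts that in any implicative involutive BE algebra, $\le_Q$ implies $\le_L$. Since an implicative-orthomodular lattice is in particular an implicative involutive BE algebra, this gives one direction of the equality for free: every pair $x,y \in X$ with $x \le_Q y$ satisfies $x \le_L y$.

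For the reverse direction, I would appeal to Theorem \ref{dioml-05-50}, specifically the implication $(a) \Rightarrow (b)$, which says that whenever $X$ is an implicative-orthomodular lattice, $\le_L$ implies $\le_Q$. Thus any pair $x, y \in X$ with $x \le_L y$ also satisfies $x \le_Q y$.

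Combining these two inclusions yields $\le_Q \, = \, \le_L$, which is the desired conclusion. There is no genuine obstacle here: the corollary is essentially a bookkeeping observation that packages Lemma \ref{iol-30-20}$(2)$ and the equivalence $(a) \Leftrightarrow (b)$ of Theorem \ref{dioml-05-50} into a single clean statement, and the whole proof should fit in two or three lines.
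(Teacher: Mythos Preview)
Your proposal is correct and matches the paper's own proof exactly: the paper simply states that the corollary follows from Theorem \ref{dioml-05-50} and Lemma \ref{iol-30-20}, which is precisely the pair of results you invoke for the two inclusions.
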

\begin{proof}
It follows by Theorem \ref{dioml-05-50} and Lemma \ref{iol-30-20}. 
\end{proof}

$\vspace*{1mm}$

\section{Foulis-Holland theorem for implicative-orthomodular lattices} 

In this section, we define and study the notion of distributivity for implicative-orthomodular lattices. 
Similarly as in the case of orthomodular lattices (\cite{Holl1}), we prove the Foulis-Holland theorem for implicative-orthomodular lattices. 
We show that an implicative-orthomodular lattice is distributive if and only if it satisfies the divisibility condition. 
Moreover, we prove that an implicative involutive BE algebra is an implicative-orthomodular lattice if and only if the 
commutativity relation is symmetric. 

\begin{definition} \label{dioml-05}
\emph{
Let $X$ be an implicative involutive BE algebra. We say that $X$ satisfies the \emph{divisibility condition} 
if, for all $x,y\in X$: \\
$(Idiv)$ $x\ra (x\ra y)^*=x\ra y^*$. 
}
\end{definition}

If $x,y\in X$ such that $x\ra (x\ra y)^*=x\ra y^*$, we write $(x,y)Idiv$. 
Obviously, if $(x,y)Idiv$ for all $x,y\in X$, then $X$ verifies condition $(Idiv)$. 

\begin{lemma} \label{dioml-05-10}
Let $X$ be an implicative involutive BE algebra and let $x,y\in X$. If $(x,y)Idiv$, then $x\le y$ implies $x\le_L y$. 
\end{lemma}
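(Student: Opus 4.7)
The plan is to unpack the definitions and watch things collapse. Assume $x\le y$ and $(x,y)Idiv$. Since $\le$ is defined by $x\le y \iff x\ra y=1$, I first compute $(x\ra y)^*=1^*=0$, using that $1^*=1\ra 0=0$ by $(BE_3)$ in the bounded setting. Then $x\ra(x\ra y)^*=x\ra 0=x^*$ by definition of $x^*$.

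Now I invoke the hypothesis $(x,y)Idiv$, which says precisely that $x\ra(x\ra y)^*=x\ra y^*$. Combining with the previous step gives $x\ra y^*=x^*$. Taking $^*$ of both sides and using involutivity $x^{**}=x$, I obtain $(x\ra y^*)^*=x$, which is exactly the definition of $x\le_L y$ (equivalently, one could check $y\ra x^*=x^*$ using the alternative formulation of $\le_L$ noted just before Lemma \ref{qbe-10-10}, but the direct substitution is cleaner).

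There is really no main obstacle here — the statement is essentially a one-line unfolding once one notices that the hypothesis $(x,y)Idiv$ converts the trivial information $(x\ra y)^*=0$ (coming from $x\le y$) into the nontrivial equality $x\ra y^*=x^*$ needed for $\le_L$. The only thing to be careful about is to use involutivity at the very end; otherwise the computation is purely syntactic, with no appeal to Lemma \ref{qbe-10} beyond $(BE_3)$ and $x^{**}=x$.
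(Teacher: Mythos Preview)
Your proof is correct and follows essentially the same approach as the paper's own proof, which computes $(x\ra y^*)^*=(x\ra (x\ra y)^*)^*=(x\ra 1^*)^*=x$ in a single line. The only difference is that you spell out the intermediate steps $1^*=0$, $x\ra 0=x^*$, and the final application of involutivity, which the paper leaves implicit.
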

\begin{proof}  
Let $x,y\in X$ such that $x\le y$, that is $x\ra y=1$. Since $(x,y)Idiv$, we get: \\
$(x\ra y^*)^*=(x\ra (x\ra y)^*)^*=(x\ra 1^*)^*=x$, hence $x\le_L y$. 
\end{proof}

\begin{definition} \label{dioml-10} 
\emph{
Let $X$ be an implicative involutive BE algebra and let $x,y\in X$. 
We say that $x$ \emph{commutes} with $y$, denoted by $x\mathcal{C} y$, if $x=(x\ra y^*)\ra (x\ra y)^*$. 
}
\end{definition}

\begin{lemma} \label{dioml-20} 
Let $X$ be an implicative involutive BE algebra and let $x\in X$. Then: \\
$(1)$ $x\mathcal{C} x$, $x\mathcal{C} 0$, $0\mathcal{C} x$, $x\mathcal{C} 1$, $1\mathcal{C} x$, $x\mathcal{C} x^*$, 
$x^*\mathcal{C} x;$ \\ 
$(2)$ $x\le_L y$ or $x\le_L y^*$ implies $x\mathcal{C} y;$ \\
$(3)$ $x\le_Q y$ or $x\le_Q y^*$ implies $x\mathcal{C} y;$ \\
$(4)$ $x\mathcal{C} (y\ra x).$ 
\end{lemma}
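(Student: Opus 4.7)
The plan is to verify each clause by direct calculation using the involutive BE-algebra identities in Lemma~\ref{qbe-10}, the involutivity $x^{**}=x$, and the implicative consequences of Lemma~\ref{iol-30}, particularly $(iG)$ in the two forms $x\ra x^*=x^*$ and $x^*\ra x=x$. No quantum-Wajsberg or orthomodularity input is needed; the seven assertions in $(1)$ plus the reductions in $(2)$--$(4)$ are purely equational.

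For part $(1)$, each of the seven assertions is a one-line rewrite. For instance, $x\,\mathcal{C}\,x$ comes from $(x\ra x^*)\ra(x\ra x)^*=x^*\ra 0=x^{**}=x$ by $(iG)$, $(BE_1)$ and involutivity; $1\,\mathcal{C}\,x$ reduces to $x^*\ra x^*=1$; and $x^*\,\mathcal{C}\,x$ uses $(iG)$ twice to reach $1\ra x^*=x^*$. The remaining cases $x\,\mathcal{C}\,0$, $0\,\mathcal{C}\,x$, $x\,\mathcal{C}\,1$ and $x\,\mathcal{C}\,x^*$ are of the same flavor, needing only $(BE_3)$ and involutivity.

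For part $(2)$, suppose first $x\le_L y$, so that $x\ra y^*=x^*$. Then
\[
(x\ra y^*)\ra(x\ra y)^* \;=\; x^*\ra(x\ra y)^* \;=\; (x\ra y)\ra x^{**} \;=\; (x\ra y)\ra x \;=\; x,
\]
where the middle equality is Lemma~\ref{qbe-10}(3) and the last is $(Impl)$. If instead $x\le_L y^*$, then $x\ra y=x^*$, so $(x\ra y)^*=x$, and $(x\ra y^*)\ra x=x$ again by $(Impl)$ (applied with $y$ replaced by $y^*$). Part $(3)$ is then immediate from Lemma~\ref{iol-30-20}(2), which yields $\le_Q\,\Rightarrow\,\le_L$, combined with part $(2)$.

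For part $(4)$, the strategy is to establish $x\le_L y\ra x$ and then invoke $(2)$. Using Lemma~\ref{qbe-10}(3) to flip the outer implication, and Lemma~\ref{qbe-10}(6) to rewrite $y\ra x$ as $x^*\ra y^*$,
\[
(x\ra(y\ra x)^*)^* \;=\; ((y\ra x)\ra x^*)^* \;=\; ((x^*\ra y^*)\ra x^*)^* \;=\; (x^*)^* \;=\; x,
\]
the penultimate step being an instance of $(Impl)$. The only moment requiring any insight is exactly this one: recognizing that after a single symmetry rewrite the expression $(x^*\ra y^*)\ra x^*$ collapses by $(Impl)$ to $x^*$. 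Everything else is mechanical manipulation of arrows, so no serious obstacle is expected.
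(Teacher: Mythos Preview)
Your proof is correct and follows essentially the same route as the paper: part~$(1)$ is left as a straightforward verification, part~$(2)$ is the same two-case computation using $(Impl)$, part~$(3)$ reduces to $(2)$ via Lemma~\ref{iol-30-20}(2), and part~$(4)$ reduces to $(2)$ via the observation $x\le_L y\ra x$ (which the paper also obtains by exactly your rewrite $((x^*\ra y^*)\ra x^*)^*=x$, appearing in the proof of Theorem~\ref{dioml-05-50}). You supply slightly more detail for $(1)$ and $(4)$ than the paper does, but there is no difference in substance.
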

\begin{proof} 
$(1)$ It is straightforward. \\
$(2)$ Since $x\le_L y$, by $(Impl)$,  
$(x\ra y^*)\ra (x\ra y)^*=x^*\ra (x\ra y)^*=(x\ra y)\ra x=x$, hence $x\mathcal{C} y$. 
Similarly, $x\le_L y^*$ implies $x^*=x\ra y$. 
Then, by $(Impl)$, we get $(x\ra y^*)\ra (x\ra y)^*=(x\ra y^*)\ra x=x$, that is $x\mathcal{C} y$. \\
$(3)$ It follows by $(2)$, since by Lemma \ref{iol-30-20}, $x\le_Q y$ implies $x\le_L y$. \\
$(4)$ It follows by $(2)$, since $x\le_L y\ra x$. 
\end{proof}

\begin{proposition} \label{dioml-10-05} Let $X$ be an implicative involutive BE algebra, and let $x,y\in X$. 
If $x\mathcal{C} y$, then $x\Cap y=(x\ra y^*)^*$. 
\end{proposition}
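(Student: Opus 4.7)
The plan is to show $x\Cap y = m$ where $m := (x\ra y^*)^*$, by first reducing $x\Cap y$ to $m\Cap y$ using the commutation hypothesis, and then verifying $m\Cap y = m$ by establishing $m\le_Q y$.

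For the reduction, I would first record the identity $(x\ra y)\ra y^* = y^*$, valid in any implicative involutive BE algebra: indeed, $(Impl)$ applied to $u=y^*$, $v=x^*$ gives $(y^*\ra x^*)\ra y^* = y^*$, and Lemma \ref{qbe-10}(6) rewrites $y^*\ra x^* = x\ra y$. Now I apply $^*$ to the commutation $x = (x\ra y^*)\ra(x\ra y)^*$ and expand $x^*\ra z$ via Lemma \ref{qbe-10}(7): for every $z$,
\[
x^*\ra z = \bigl((x\ra y^*)\ra(x\ra y)^*\bigr)^*\ra z = (x\ra y^*)\ra\bigl((x\ra y)\ra z\bigr).
\]
Setting $z = y^*$ and using the identity just derived yields $x^*\ra y^* = (x\ra y^*)\ra y^* = m^*\ra y^*$. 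Substituting into the definition of $\Cap$ and invoking Proposition \ref{qbe-20}(3):
\[
x\Cap y = \bigl((x^*\ra y^*)\ra y^*\bigr)^* = \bigl((m^*\ra y^*)\ra y^*\bigr)^* = (m^*\Cup y^*)^* = m\Cap y.
\]

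For the second step, I would verify $m\le_L y$ by the short calculation
\[
y\ra m^* = y\ra(x\ra y^*) = x\ra(y\ra y^*) = x\ra y^* = m^*,
\]
which uses $(BE_4)$ and $(iG)$. Since $X$ is an implicative-orthomodular lattice, Corollary \ref{dioml-10-20} identifies $\le_L$ with $\le_Q$, so $m\le_Q y$, which by definition of $\le_Q$ means $m\Cap y = m$. Combining with the first step gives $x\Cap y = m\Cap y = m = (x\ra y^*)^*$.

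The main obstacle is spotting the reduction in the first step: one needs to see that the ``extra'' factor $(x\ra y)^*$ in the commutation collapses after taking $\ra y^*$, via $(x\ra y)\ra y^* = y^*$, so that $x^*\ra y^*$ coincides with $m^*\ra y^*$; once this is seen, the remaining work is a direct application of the IOML characterization of $\le_L\Leftrightarrow\le_Q$.
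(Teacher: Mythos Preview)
Your argument is correct and takes a genuinely different route from the paper's. The paper expands $(x\Cap y)^*=(y\ra x)\ra y^*$, substitutes the commutation hypothesis into $y\ra x$, and simplifies step by step via $(BE_4)$ and $(Impl)$ until it reaches $((x\ra y^*)\ra y^*)\ra y^*$; it then invokes Proposition~\ref{ioml-50}$(1)$ to collapse this to $x\ra y^*$. Your reduction $x^*\ra y^*=m^*\ra y^*$, obtained from the identity $(x\ra y)\ra y^*=y^*$, is shorter and more conceptual: it shows in one stroke that $x\Cap y=m\Cap y$, after which everything reduces to $m\le_Q y$. The two proofs therefore converge on the same residual fact, namely $((x\ra y^*)\ra y^*)\ra y^*=x\ra y^*$, which is precisely the implication $m\le_L y\Rightarrow m\le_Q y$ for $m=(x\ra y^*)^*$.

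One caveat deserves mention: in your second step you write ``since $X$ is an implicative-orthomodular lattice'' to pass from $m\le_L y$ to $m\le_Q y$, whereas the stated hypothesis is only that $X$ is an implicative involutive BE algebra. This is not a flaw peculiar to your argument---the paper's proof makes the same move at the analogous place, citing Proposition~\ref{ioml-50}$(1)$, an IOML result. In fact the proposition is false without orthomodularity: in $\mathbf{O}_6$ (Example~\ref{dioml-120-40}) one has $x\mathcal{C}y$ since $(x\ra y^*)\ra(x\ra y)^*=x^*\ra 0=x$, yet $x\Cap y=y\neq x=(x\ra y^*)^*$. So the hypothesis really ought to be IOML, and your appeal to it is necessary rather than gratuitous; your proof matches the paper's in strength while arriving at the critical step more directly.
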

\begin{proof}
Suppose $x\mathcal{C} y$, that is $(x\ra y^*)\ra (x\ra y)^*=x$. It follows that: \\
$\hspace*{2.00cm}$ $(x\Cap y)^*=(x^*\ra y^*)\ra y^*=(y\ra x)\ra y^*$ \\
$\hspace*{3.50cm}$ $=(y\ra ((x\ra y^*)\ra (x\ra y)^*))\ra y^*$ \\
$\hspace*{3.50cm}$ $=((x\ra y^*)\ra (y\ra (x\ra y)^*))\ra y^*$ \\
$\hspace*{3.50cm}$ $=((x\ra y^*)\ra ((x\ra y)\ra y^*))\ra y^*$ \\
$\hspace*{3.50cm}$ $=((x\ra y^*)\ra ((y^*\ra x^*)\ra y^*))\ra y^*$ \\
$\hspace*{3.50cm}$ $=((x\ra y^*)\ra y^*)\ra y^*$ (by $(Impl)$) \\
$\hspace*{3.50cm}$ $=(x\Cup y^*)\ra y^*=y\ra (x^*\Cap y)=y\ra x^*$ (by Prop. \ref{ioml-50}$(1)$) \\
$\hspace*{3.50cm}$ $=x\ra y^*$. \\
Hence $x\Cap y=(x\ra y^*)^*$. 
\end{proof}

\begin{corollary} \label{dioml-10-05-10} Let $X$ be an implicative involutive BE algebra.  
If $x\mathcal{C} y$ for all $x,y\in X$, then $X$ is an implicative-orthomodular lattice. 
\end{corollary}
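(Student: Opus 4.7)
The plan is to verify one of the three equivalent conditions in Lemma~\ref{ioml-30}, and the most convenient target turns out to be $(IOM^{''})$: $x\Cup(x\ra y)^*=x$ for all $x,y\in X$. Since $X$ is already an implicative involutive BE algebra, everything set up in Lemma~\ref{iol-30} (in particular the $(Pimpl)$ identity $x\ra(x\ra y)=x\ra y$) is available.

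The key idea is to apply the commutation hypothesis not to the pair $(x,y)$, but to the specific pair $(x,\,x\ra y)$. By assumption $x\,\mathcal{C}\,(x\ra y)$, which by Definition~\ref{dioml-10} unfolds to
\[
\bigl(x\ra(x\ra y)^{*}\bigr)\ra\bigl(x\ra(x\ra y)\bigr)^{*}=x.
\]
Applying $(Pimpl)$ on the second factor collapses $x\ra(x\ra y)$ to $x\ra y$, so the identity reduces to
\[
\bigl(x\ra(x\ra y)^{*}\bigr)\ra(x\ra y)^{*}=x,
\]
and the left-hand side is precisely $x\Cup(x\ra y)^{*}$ by the definition of $\Cup$. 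Hence $x\Cup(x\ra y)^{*}=x$, which is exactly condition $(IOM^{''})$.

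Having established $(IOM^{''})$, Lemma~\ref{ioml-30} gives the other two equivalent forms, and Definition~\ref{ioml-30-10} then lets us conclude that $X$ is an implicative-orthomodular lattice. There is no real obstacle here: the only temptation to avoid is trying to use Proposition~\ref{dioml-10-05} to rewrite $x\Cap(y\ra x)$, since its proof relies on IOML-level facts and would be circular in this context. The clean route is the one above — choose the pair $(x,x\ra y)$ in the commutation hypothesis so that $(Pimpl)$ immediately converts the defining equation of $\mathcal{C}$ into $(IOM^{''})$.
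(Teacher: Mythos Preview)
Your proof is correct and takes a genuinely different route from the paper. The paper's argument goes through Proposition~\ref{dioml-10-05}: from $x\mathcal{C}y$ it deduces $x\Cap y=(x\ra y^*)^*$, so that $\le_L$ coincides with $\le_Q$, and then invokes Theorem~\ref{dioml-05-50}. You instead specialize the universal commutation hypothesis to the pair $(x,\,x\ra y)$ and use $(Pimpl)$ to collapse the defining equation of $\mathcal{C}$ directly into $(IOM^{''})$, bypassing the $\le_L$/$\le_Q$ machinery entirely.

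Your caution about Proposition~\ref{dioml-10-05} is well founded, not merely stylistic. Its proof invokes Proposition~\ref{ioml-50}$(1)$, an IOML identity, so as written it is not available at the bare implicative involutive BE level; and in fact the statement fails there: in $\mathbb{O}_6$ (Example~\ref{dioml-120-40}) one has $x\mathcal{C}y$ (since $(x\ra y^*)\ra(x\ra y)^*=x^*\ra 0=x$) while $x\Cap y=y\neq x=(x\ra y^*)^*$. Your argument uses only $(Pimpl)$ from Lemma~\ref{iol-30}, the definition of $\Cup$, and the hypothesis, so it is self-contained at the right generality and avoids this circularity. It is the cleaner proof.
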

\begin{proof}
If $x\mathcal{C} y$, then by Proposition \ref{dioml-10-05}, $x\Cap y=(x\ra y^*)^*$, 
so that $x\le_L y$ if and only if $x\le_Q y$, for all $x,y\in X$. 
According to Theorem \ref{dioml-05-50}, $X$ is an implicative-orthomodular lattice. 
\end{proof}

\begin{theorem} \label{dioml-10-10} Let $X$ be an implicative involutive BE algebra. 
The following are equivalent: \\
$(a)$ $X$ is an implicative-orthomodular lattice; \\
$(b)$ whenever $x\mathcal{C} y$, then $y\mathcal{C} x$. 
\end{theorem}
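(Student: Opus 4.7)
My plan is to reduce each direction to a short algebraic manipulation relying on the transposition identities $x\ra y^{*}=y\ra x^{*}$ (Lemma \ref{qbe-10}(3)) and $x^{*}\ra z=z^{*}\ra x$ (Lemma \ref{qbe-10}(5)), together with two previously established IOML facts, namely Proposition \ref{dioml-10-05} and Proposition \ref{ioml-50}(4).

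For $(a)\Rightarrow(b)$, I would start from $x\mathcal{C} y$ and apply Proposition \ref{dioml-10-05} to obtain $x\Cap y=(x\ra y^{*})^{*}=(y\ra x^{*})^{*}$, the second equality being Lemma \ref{qbe-10}(3). Exchanging $x$ and $y$ in Proposition \ref{ioml-50}(4) yields $(y\ra x)\ra (x\Cap y)=y$, and substituting the previous expression gives $(y\ra x)\ra (y\ra x^{*})^{*}=y$. A further application of Lemma \ref{qbe-10}(3), now with $p=y\ra x$ and $q=y\ra x^{*}$, rewrites the left-hand side as $(y\ra x^{*})\ra (y\ra x)^{*}$, which is precisely the defining equation of $y\mathcal{C} x$.

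For $(b)\Rightarrow(a)$, I would invoke the equivalence $(a)\Leftrightarrow(c)$ of Theorem \ref{dioml-05-50}, so it suffices to prove that $x\le_{L}y$ implies $y=y\Cup x$. Given $x\le_{L}y$, Lemma \ref{dioml-20}(2) yields $x\mathcal{C} y$, and hypothesis $(b)$ then gives $y\mathcal{C} x$, that is, $(y\ra x^{*})\ra (y\ra x)^{*}=y$. Because $x\le_{L}y$ is equivalent to $x^{*}=x\ra y^{*}=y\ra x^{*}$ (the last step by Lemma \ref{qbe-10}(3)), the identity reduces to $x^{*}\ra (y\ra x)^{*}=y$, and Lemma \ref{qbe-10}(5) applied with $z=(y\ra x)^{*}$ rewrites this as $(y\ra x)\ra x=y$, which is exactly $y\Cup x=y$. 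The only real obstacle is bookkeeping: each direction requires several applications of $a\ra b^{*}=b\ra a^{*}$ with different choices of $a$ and $b$, and the substitutions must be performed in precisely the right order; there is no deeper conceptual step, because the translations between $\mathcal{C}$, the operations $\Cap$ and $\Cup$, and the order $\le_{L}$ have already been encapsulated in Propositions \ref{dioml-10-05}, \ref{ioml-50}(4) and Theorem \ref{dioml-05-50}.
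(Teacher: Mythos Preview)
Your proof is correct. The two directions are argued slightly differently from the paper, but the underlying content is the same.

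For $(a)\Rightarrow(b)$, the paper substitutes the identity $x=(x\ra y^{*})\ra (x\ra y)^{*}$ directly into $(y\ra x^{*})\ra (y\ra x)^{*}$ and simplifies step by step, invoking $(Impl)$ and finishing with Proposition~\ref{ioml-50}(4). You instead route through Proposition~\ref{dioml-10-05} to obtain $x\Cap y=(y\ra x^{*})^{*}$ first, and then apply Proposition~\ref{ioml-50}(4) with the roles of $x$ and $y$ exchanged; a final transposition via Lemma~\ref{qbe-10}(3) recovers $y\mathcal{C}x$. Your version is more modular, but since Proposition~\ref{dioml-10-05} itself encodes essentially the same computation the paper carries out inline, no genuinely new idea is introduced.

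For $(b)\Rightarrow(a)$, the paper dualizes $x\le_{L}y$ to $y^{*}\le_{L}x^{*}$, obtains $x^{*}\mathcal{C}y^{*}$ from $(b)$, and concludes $x=x\Cap y$, i.e.\ verifies condition $(b)$ of Theorem~\ref{dioml-05-50}. You stay with $x\le_{L}y$, obtain $y\mathcal{C}x$ from $(b)$, and conclude $y=y\Cup x$, i.e.\ verify condition $(c)$ of Theorem~\ref{dioml-05-50}. These are dual variants of the same computation; both reduce immediately once one substitutes $y\ra x^{*}=x^{*}$ (respectively $x^{*}\ra y=y$).
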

\begin{proof}
$(a) \Rightarrow (b)$ Suppose $X$ is an implicative-orthomodular lattice, and let $x,y\in X$ such that 
$x\mathcal{C} y$, that is $x=(x\ra y^*)\ra (x\ra y)^*$. Then we have:\\
$\hspace*{1.30cm}$ $(y\ra x^*)\ra (y\ra x)^*=(y\ra x^*)\ra (y\ra ((x\ra y^*)\ra (x\ra y)^*))^*$ \\
$\hspace*{5.00cm}$ $=(y\ra x^*)\ra ((x\ra y^*)\ra (y\ra (x\ra y)^*))^*$ \\
$\hspace*{5.00cm}$ $=(y\ra x^*)\ra ((x\ra y^*)\ra ((x\ra y)\ra y^*))^*$ \\
$\hspace*{5.00cm}$ $=(y\ra x^*)\ra ((x\ra y^*)\ra ((y^*\ra x^*)\ra y^*))^*$ \\
$\hspace*{5.00cm}$ $=(y\ra x^*)\ra ((x\ra y^*)\ra y^*)^*$ (by $(Impl)$) \\
$\hspace*{5.00cm}$ $=(y\ra x^*)\ra (x\Cup y^*)^*$ \\
$\hspace*{5.00cm}$ $=(y\ra x^*)\ra (x^*\Cap y)$ \\
$\hspace*{5.00cm}$ $=y$ (by Prop. \ref{ioml-50}$(4)$). \\ 
Hence $y\mathcal{C} x$. \\
$(b) \Rightarrow (a)$ Let $x\le_L y$, so that $y^*\le_L x^*$. 
By Lemma \ref{dioml-20}, we have 
$y^*\mathcal{C} x^*$, and by $(b)$ we get $x^*\mathcal{C} y^*$, hence $x^*=(x^*\ra y)\ra (x^*\ra y^*)^*$.  
Since $y^*\le_L x^*$ implies $x^*\ra y=y$, we get: 
$x^*=y\ra (x^*\ra y^*)^*=(x^*\ra y^*)\ra y^*=(x^*\Cup y^*)$. Hence $x=x\Cap y$, that is $x\le_Q y$. 
Thus $\le_L$ implies $\le_Q $, so that, by Theorem \ref {dioml-05-50}, $X$ is an implicative-orthomodular lattice. 
\end{proof}

\begin{lemma} \label{dioml-30} 
Let $X$ be an implicative-orthomodular lattice and let $x,y\in X$. 
Then $x\mathcal{C} y$ implies $x\mathcal{C} y^*$, $x^*\mathcal{C} y$ and $x^*\mathcal{C} y^*$. 
\end{lemma}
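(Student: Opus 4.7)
The plan is to derive all three conclusions from a single reversal identity together with the symmetry established in Theorem \ref{dioml-10-10}. The key observation is that the equation defining $x\mathcal{C} y$ is already invariant under the swap $y \leftrightarrow y^*$: Lemma \ref{qbe-10}(3) gives $a\ra b^* = b\ra a^*$ in any bounded BE algebra, and setting $a := x\ra y^*$, $b := x\ra y$ yields
\[
(x\ra y^*)\ra (x\ra y)^* \; = \; (x\ra y)\ra (x\ra y^*)^*.
\]
Using involutivity to rewrite the right-hand side as $(x\ra (y^*)^*)\ra (x\ra y^*)^*$, we recognise it as the defining expression for $x\mathcal{C} y^*$. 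Hence the conditions $x\mathcal{C} y$ and $x\mathcal{C} y^*$ share the same defining equation and are equivalent. Note that this step does not use the IOML hypothesis at all.

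The remaining two relations follow by bouncing between this equivalence and Theorem \ref{dioml-10-10}. From $x\mathcal{C} y$, symmetry gives $y\mathcal{C} x$; the equivalence just established then yields $y\mathcal{C} x^*$; a second application of symmetry produces $x^*\mathcal{C} y$; and a final use of the equivalence delivers $x^*\mathcal{C} y^*$. Thus all three statements follow from the hypothesis.

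There is no genuine obstacle here. The only subtle point is noticing that Lemma \ref{qbe-10}(3) makes the defining expression of $\mathcal{C}$ invariant under involution in its second argument; once this is spotted, the whole proposition reduces to iterating that invariance together with the symmetry of $\mathcal{C}$, and no further computation is needed.
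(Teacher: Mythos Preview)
Your proof is correct and follows essentially the same approach as the paper: both arguments use the identity $(x\ra y^*)\ra (x\ra y)^*=(x\ra (y^*)^*)\ra (x\ra y^*)^*$ (which is exactly Lemma~\ref{qbe-10}(3) plus involutivity) to pass from $x\mathcal{C} y$ to $x\mathcal{C} y^*$, and then alternate this with the symmetry of $\mathcal{C}$ from Theorem~\ref{dioml-10-10} to obtain the remaining relations. Your write-up is in fact slightly more complete, since the paper's proof derives $x\mathcal{C} y^*$ and $x^*\mathcal{C} y^*$ but leaves $x^*\mathcal{C} y$ implicit.
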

\begin{proof}
Since $x\mathcal{C} y$, we have: $(x\ra (y^*)^*)\ra (x\ra y^*)^*=(x\ra y^*)\ra (x\ra y)^*=x$, thus $x\mathcal{C} y^*$. 
Using Theorem \ref{dioml-10-10}, it follows that $y^*\mathcal{C} x$, so that $y^*\mathcal{C} x^*$. 
Applying again Theorem \ref{dioml-10-10}, we get $x^*\mathcal{C} y^*$.  
\end{proof}

\begin{proposition} \label{dioml-60} 
Let $X$ be an implicative-orthomodular lattice, and let $x,y\in X$. Then $x\mathcal{C} y$ if and only if 
$(x,y)Idiv$. 
\end{proposition}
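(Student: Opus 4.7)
The plan is to establish the two implications separately, reducing each to combinations of lemmas already proved. For the nontrivial direction ($x\mathcal{C} y \Rightarrow (x,y)Idiv$), the key observation is that Proposition~\ref{dioml-10-05} rewrites the commutativity hypothesis as $x\Cap y=(x\ra y^*)^*$, while the identity $(QW_1)$, guaranteed by Theorem~\ref{ioml-90-10}, says $x\ra(x\Cap y)=x\ra y$. Combining these two identities essentially gives $(Idiv)$ with $y$ replaced by $y^*$. To recover $(Idiv)$ itself, I would first invoke Lemma~\ref{dioml-30} to pass from $x\mathcal{C} y$ to $x\mathcal{C} y^*$, then apply Proposition~\ref{dioml-10-05} to the pair $(x,y^*)$, obtaining $x\Cap y^*=(x\ra y)^*$, and finally apply $(QW_1)$ to the pair $(x,y^*)$ to conclude $x\ra(x\ra y)^*=x\ra y^*$.

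For the converse direction ($(x,y)Idiv \Rightarrow x\mathcal{C} y$), the strategy is to read the characterization $(IOM^{''})$ from Lemma~\ref{ioml-30} in terms of $\Cup$: namely $x=x\Cup (x\ra y)^*$. Unfolding $x\Cup z=(x\ra z)\ra z$ with $z=(x\ra y)^*$ yields
$$x=(x\ra (x\ra y)^*)\ra (x\ra y)^*.$$
Substituting the hypothesis $(Idiv)$ into the outer antecedent replaces $x\ra (x\ra y)^*$ by $x\ra y^*$, which is exactly the defining equation $x=(x\ra y^*)\ra (x\ra y)^*$ of $x\mathcal{C} y$.

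I do not anticipate a real obstacle here; the whole argument is essentially a two-line calculation in each direction once the right pre-processing step is identified. The only subtle point is remembering to switch from $y$ to $y^*$ in the forward direction, which is why Lemma~\ref{dioml-30} (commutativity closed under complementation on the right) must be invoked before Proposition~\ref{dioml-10-05} is applied, rather than after. In the converse direction, the main conceptual move is to recognize $(IOM^{''})$ as providing an identity into which $(Idiv)$ can be substituted directly.
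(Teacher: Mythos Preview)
Your proof is correct. The converse direction ($(x,y)Idiv \Rightarrow x\mathcal{C}y$) is exactly what the paper does: unfold $(IOM^{''})$ as $x=(x\ra(x\ra y)^*)\ra(x\ra y)^*$ and substitute the $(Idiv)$ hypothesis into the outer antecedent.

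For the forward direction, however, you and the paper take genuinely different routes. The paper argues directly by an order-theoretic squeeze: it first observes $x\ra(x\ra y)^*\le_Q x\ra y^*$ (since $y\le_Q x\ra y$), then, using $u\le_Q v \Rightarrow v=u^*\ra(v\ra u)^*$ from Proposition~\ref{ioml-40}$(2)$, computes $(x\ra y^*)\ra(x\ra(x\ra y)^*)=x\ra((x\ra y^*)\ra(x\ra y)^*)=x\ra x=1$ via the hypothesis $x\mathcal{C}y$, and concludes equality. Your route is more modular: you pass to $x\mathcal{C}y^*$ via Lemma~\ref{dioml-30}, apply Proposition~\ref{dioml-10-05} to get $x\Cap y^*=(x\ra y)^*$, and then read off $(Idiv)$ from $(QW_1)$. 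The trade-off is that your argument is shorter and more conceptual once the cited lemmas are in place, but it relies on heavier machinery upstream (Lemma~\ref{dioml-30} already invokes Theorem~\ref{dioml-10-10}, and $(QW_1)$ needs Theorem~\ref{ioml-90-10}); the paper's proof is more self-contained, using only the basic order properties of IOMLs and the commutativity hypothesis in a single computation. There is no circularity in your approach, since none of the cited results depend on Proposition~\ref{dioml-60}.
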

\begin{proof} 
Assume that $x\mathcal{C} y$. 
By Proposition \ref{ioml-40}$(2)$, $x\le_Q y$ implies $y=y\Cup x=x^*\ra (y\ra x)^*$. \\
Since $y\le_Q x\ra y$, we get $(x\ra y)^*\le_Q y^*$, so that $x\ra (x\ra y)^*\le_Q x\ra y^*$. 
Then, by above remark, we get: \\ 
$\hspace*{2.00cm}$ $x\ra y^*=(x\ra (x\ra y)^*)^*\ra ((x\ra y^*)\ra (x\ra (x\ra y)^*))^*$ \\
$\hspace*{3.25cm}$ $=(x\ra (x\ra y)^*)^*\ra (x\ra ((x\ra y^*)\ra (x\ra y)^*))^*$ \\
$\hspace*{3.25cm}$ $=(x\ra (x\ra y)^*)^*\ra (x\ra x)^*$ (since $x\mathcal{C} y$, $(x\ra y^*)\ra (x\ra y)^*=x$) \\
$\hspace*{3.25cm}$ $=(x\ra (x\ra y)^*)^*\ra 0=x\ra (x\ra y)^*$. \\
Hence $(x,y)Idiv$. 
Conversely, suppose $(x,y)Idiv$, that is $x\ra (x\ra y)^*=x\ra y^*$. 
Since by $(IOM^{''})$, $x\Cup (x\ra y)^*=x$, we get: \\
$\hspace*{2.00cm}$ $x=(x\ra (x\ra y)^*)\ra (x\ra y)^*=(x\ra y^*)\ra (x\ra y)^*$, \\
that is $x\mathcal{C} y$.  
\end{proof}

\begin{proposition} \label{dioml-70} 
Let $X$ be an implicative-orthomodular lattice, and let $x,y\in X$. Then $x\mathcal{C} y$ if and only if 
$y\Cap x=(x\ra y^*)^*$. 
\end{proposition}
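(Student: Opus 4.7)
The plan is to leverage the symmetry of the commutativity relation in an IOML (Theorem \ref{dioml-10-10}), together with the already-established formula for $x \Cap y$ under commutativity (Proposition \ref{dioml-10-05}), to get the forward direction almost for free; and then to invert the argument using the absorption identity $(x\ra y)\ra (y\Cap x)=x$ of Proposition \ref{ioml-50}$(4)$ for the backward direction.

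For the forward direction, I would argue as follows. Suppose $x\mathcal{C}y$. Since $X$ is an implicative-orthomodular lattice, Theorem \ref{dioml-10-10} yields $y\mathcal{C}x$. Now apply Proposition \ref{dioml-10-05} with the roles of $x$ and $y$ swapped to conclude that $y\Cap x=(y\ra x^*)^*$. Finally, by Lemma \ref{qbe-10}$(3)$, $y\ra x^*=x\ra y^*$, so $y\Cap x=(x\ra y^*)^*$, as required.

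For the backward direction, assume $y\Cap x=(x\ra y^*)^*$, so that $x\ra y^*=(y\Cap x)^*$. The goal is to show $(x\ra y^*)\ra (x\ra y)^*=x$. Substituting the previous identity and applying Lemma \ref{qbe-10}$(5)$ (the exchange rule $a^*\ra b=b^*\ra a$ in an involutive setting) to $a=y\Cap x$ and $b=(x\ra y)^*$, we get
\[
(x\ra y^*)\ra (x\ra y)^* \;=\; (y\Cap x)^*\ra (x\ra y)^* \;=\; (x\ra y)\ra (y\Cap x).
\]
The right-hand side equals $x$ by Proposition \ref{ioml-50}$(4)$, so $x\mathcal{C}y$.

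The two directions are essentially symmetric in flavor, and no step requires new heavy calculation; the only subtlety is being careful about which argument of $\Cap$ is which when invoking Proposition \ref{dioml-10-05} and Proposition \ref{ioml-50}$(4)$, since the IOML operations are not \emph{a priori} commutative. The main "obstacle," if any, is recognizing that the desired absorption identity $(x\ra y)\ra (y\Cap x)=x$ is already available, so that the backward direction does not have to re-derive anything from scratch.
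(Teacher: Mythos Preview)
Your proof is correct. The backward direction is verbatim the paper's argument: substitute $(y\Cap x)^*$ for $x\ra y^*$, flip via involutivity, and invoke Proposition~\ref{ioml-50}$(4)$. For the forward direction, however, you take a slightly different route than the paper. The paper appeals to Proposition~\ref{dioml-60} to get $(x,y)Idiv$, i.e.\ $x\ra(x\ra y)^*=x\ra y^*$, and then computes directly
\[
(x\ra y^*)^*=(x\ra (x\ra y)^*)^*=((x\ra y)\ra x^*)^*=((y^*\ra x^*)\ra x^*)^*=y\Cap x.
\]
You instead use the symmetry of $\mathcal{C}$ in an IOML (Theorem~\ref{dioml-10-10}) to pass to $y\mathcal{C}x$, then apply Proposition~\ref{dioml-10-05} with the variables swapped, and finish with $y\ra x^*=x\ra y^*$. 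Both arguments are short; yours is arguably cleaner because it reuses Proposition~\ref{dioml-10-05} wholesale rather than redoing a variant of its computation, while the paper's version has the mild advantage of not invoking the somewhat heavier symmetry theorem.
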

\begin{proof} 
If $x\mathcal{C} y$, by Proposition \ref{dioml-60}, $(x,y)Idiv$, and we have: \\
$\hspace*{1.00cm}$ $(x\ra y^*)^*=(x\ra (x\ra y)^*)^*=((x\ra y)\ra x^*)^*=((y^*\ra x^*)\ra x^*)^*=y\Cap x$. \\
Conversely, assume that $y\Cap x=(x\ra y^*)^*$. Applying Proposition \ref{ioml-50}$(4)$, it follows that: \\
$\hspace*{1.00cm}$ $(x\ra y^*)\ra (x\ra y)^*=(y\Cap x)^*\ra (x\ra y)^*=(x\ra y)\ra (y\Cap x)=x$, \\
that is $x\mathcal{C} y$. 
\end{proof}

\begin{corollary} \label{dioml-70-10} 
Let $X$ be an implicative-orthomodular lattice, and let $x,y\in X$. Then $x\mathcal{C} y$ if and only if 
$x\Cap y=y\Cap x$. 
\end{corollary}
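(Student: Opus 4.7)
The forward implication is immediate: if $x\mathcal{C}y$, Proposition~\ref{dioml-10-05} gives $x\Cap y = (x\ra y^*)^*$ while Proposition~\ref{dioml-70} gives $y\Cap x = (x\ra y^*)^*$, so $x\Cap y = y\Cap x$.

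For the converse, the plan is to reduce to the converse part of Proposition~\ref{dioml-70}: assuming $x\Cap y = y\Cap x$, it suffices to establish $y\Cap x = (x\ra y^*)^*$. I would set $w := (x\Cap y)^* = (y\Cap x)^*$ and, using the definition of $\Cap$ together with Lemma~\ref{qbe-10}(3),(6), rewrite $w$ in two symmetric forms: $w = y\ra (y\ra x)^*$ (coming from $x\Cap y$) and $w = x\ra (x\ra y)^*$ (coming from $y\Cap x$). The decisive step is to evaluate $x\ra w$ in both ways.

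Via the first form, $(BE_4)$ yields $x\ra w = y\ra (x\ra (y\ra x)^*)$; at this point I would invoke the auxiliary identity $x\ra (y\ra x)^* = x^*$, which I extract by applying Lemma~\ref{qbe-10}(3),(6) to the $(Impl)$ instance $(x^*\ra y^*)\ra x^* = x^*$. That reduces the first evaluation to $x\ra w = y\ra x^* = x\ra y^*$. Via the second form, $(Pimpl)$ from Lemma~\ref{iol-30} collapses $x\ra w = x\ra(x\ra(x\ra y)^*) = x\ra(x\ra y)^* = w$. Equating the two evaluations gives $w = x\ra y^*$, so $x\Cap y = (x\ra y^*)^*$; combined with the hypothesis this says $y\Cap x = (x\ra y^*)^*$, and Proposition~\ref{dioml-70} then delivers $x\mathcal{C}y$. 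The main obstacle I expect is spotting the double computation of $x\ra w$ and the auxiliary identity $x\ra (y\ra x)^* = x^*$; once these are in hand, the rest is a routine application of the already-proved results.
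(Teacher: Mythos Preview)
Your argument is correct. For the forward direction you combine Proposition~\ref{dioml-10-05} with Proposition~\ref{dioml-70} to obtain $x\Cap y=(x\ra y^*)^*=y\Cap x$; the paper instead reaches the same conclusion by pairing Proposition~\ref{dioml-70} with the symmetry of $\mathcal{C}$ (Theorem~\ref{dioml-10-10}), so the two approaches coincide in spirit.

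For the converse the routes diverge. The paper records the chain $x\mathcal{C}y\Leftrightarrow y\mathcal{C}x\Leftrightarrow x\Cap y=y\Cap x=(x\ra y^*)^*$, which cleanly gives $x\mathcal{C}y\Rightarrow x\Cap y=y\Cap x$ but leaves implicit the step from the bare equality $x\Cap y=y\Cap x$ back to the common value being $(x\ra y^*)^*$. Your argument supplies exactly this step by a direct computation: writing $w=(x\Cap y)^*=(y\Cap x)^*$ in the two forms $y\ra(y\ra x)^*$ and $x\ra(x\ra y)^*$, then evaluating $x\ra w$ once via $(BE_4)$ together with the identity $x\ra(y\ra x)^*=x^*$ (an instance of $(Impl)$) and once via $(Pimpl)$, to force $w=x\ra y^*$; Proposition~\ref{dioml-70} then finishes. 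This is more elementary---it never invokes Theorem~\ref{dioml-10-10}---and makes the converse fully explicit, whereas the paper's version is shorter but leans on the already-established symmetry of $\mathcal{C}$ and is elliptic at precisely the point your double evaluation of $x\ra w$ resolves.
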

\begin{proof} 
By Proposition \ref{dioml-70}, $x\mathcal{C} y$ if and only if $y\Cap x=(x\ra y^*)^*$. 
Using Theorem \ref{dioml-10-10}, $x\mathcal{C} y$ if and only if $y\mathcal{C} x$ if and only if 
$x\Cap y=y\Cap x=(x\ra y^*)^*$.
\end{proof}

\begin{remark} \label{dioml-70-20} Let $X$ be an implicative-orthomodular lattice. \\
$(1)$ Corollary \ref{dioml-70-10} explains why relation $\mathcal{C}$ is called the 
\emph{commutativity relation} on $X$. \\
$(2)$ If $x\mathcal{C} y$ for all $x,y\in X$, then $X$ is commutative, so that it is a Wajsberg lattice. 
We will see in the next section that $X$ is an implicative-Boolean algebra. 
\end{remark}

\begin{definition} \label{dioml-80} 
\emph{
An implicative-orthomodular lattice $X$ is \emph{distributive} if it satisfies the following conditions, 
for all $x,y,z\in X$: \\
$(Idis_1)$ $((x^*\ra y)\ra z^*)^*=(x\ra z^*)\ra (y\ra z^*)^*$, \\
$(Idis_2)$ $((x\ra y^*)\ra z)^*=((z^*\ra x)\ra (z^*\ra y)^*$. 
}
\end{definition}

\begin{remark} \label{dioml-90} Using the mutually inverse transformations \\ 
$\hspace*{3cm}$ $\varphi:$\hspace*{0.2cm}$ x\ra y:=(x\wedge y^{'})^{'}$ $\hspace*{0.1cm}$ and  
                $\hspace*{0.1cm}$ $\psi:$\hspace*{0.2cm}$ x\wedge y:=(x\ra y^*)^*$, \\
we can easily see that $(Idis_1)$ and $(Idis_2)$ are equivalent to conditions: \\
$(D_1)$ $(x\vee y)\wedge z=(x\wedge z)\vee (y\wedge z)$ and \\
$(D_2)$ $(x\wedge y)\vee z=(x\vee z)\wedge (y\vee z)$, \\
from the definition of distributive lattices. 
\end{remark}

Similary to notations from \cite[Def. 3]{Holl1}, we write $(x,y,z)Idis_1$ and $(x,y,z)Idis_2$ if the triple $(x,y,z)$ verifies conditions $(Idis_1)$ and $(Idis_2)$, respectively. 
If $(x,y,z)Idis_1$ and $(x,y,z)Idis_2$, we write $(x,y,z)Idis$. 
Obviously, $(x,y,z)Idis_1$ if and only if $(x^*,y^*,z^*)Idis_2$. 
We say that $(x,y,z)$ is a distributive triple if $(x,y,z)Idis$, for all permutations of $\{x,y,z\}$. 

\begin{definition} \label{dioml-90-10} 
\emph{
Let $X$ be an implicative involutive BE algebra.  
We say that $X$ is \emph{distributive} or $X$ satisfies condition $(Idis)$ if all triples $(x,y,z)$ are distributive.  
}  
\end{definition}

\noindent
Now we can prove the Foulis-Holland theorem for implicative-orthomodular lattices. 

\begin{theorem} \label{dioml-100} \emph{(Foulis-Holland theorem)} 
Let $X$ be an implicative-orthomodular lattice and let $x,y,z\in X$, such that 
one of these elements commutes with the other two. Then $(x,y,z)$ is a distributive triple. 
\end{theorem}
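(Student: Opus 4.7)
My approach is to exploit the symmetries of the problem to reduce the twelve assertions (two distributive identities over six permutations) to a single genuinely non-trivial verification, and then carry that out via the orthomodular law. The work is mostly symbol pushing, with one real inequality at the core.

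First I normalize the hypothesis. By the symmetry of $\mathcal{C}$ established in Theorem \ref{dioml-10-10}, I may relabel and assume throughout that $x\mathcal{C} y$ and $x\mathcal{C} z$. Lemma \ref{dioml-30} then extends this to all complement variants, so $x^{\epsilon}\mathcal{C} y^{\delta}$ and $x^{\epsilon}\mathcal{C} z^{\delta}$ for $\epsilon,\delta\in\{1,*\}$. Since the transformation $(u,v,w)\mapsto(u^*,v^*,w^*)$ interchanges $(Idis_1)$ and $(Idis_2)$ (from Remark \ref{dioml-90} they are De Morgan duals of $(D_1)$ and $(D_2)$), and since our commutation hypothesis is stable under complementation, it suffices to prove $(Idis_1)$ for each permutation of $\{x,y,z\}$.

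Rewriting $(Idis_1)$ in $\Cap/\Cup$ notation yields $(u\Cup v)\Cap w=(u\Cap w)\Cup(v\Cap w)$. The central computational tool is Proposition \ref{dioml-70}: whenever $x\mathcal{C} y$ we have $x\Cap y=y\Cap x=(x\ra y^*)^*$, which allows every meet involving $x$ to be placed in canonical implicative form. For the five permutations in which the commuting element $x$ appears in an outer meet or as one of the two operands of the outer join, direct manipulation using Lemma \ref{qbe-10-10}, the swap identities of Lemma \ref{qbe-10}(7)--(8), the axiom $(Impl)$, and Corollary \ref{dioml-70-10} reduces each side to a common implicative expression.

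The substantive case and main obstacle is the permutation $(y,z,x)$, requiring $(y\Cup z)\Cap x=(y\Cap x)\Cup(z\Cap x)$: here the two non-$x$ variables are joined first, and crucially $y\mathcal{C} z$ is \emph{not} assumed. The inequality $(y\Cap x)\Cup(z\Cap x)\le_Q(y\Cup z)\Cap x$ is immediate from the monotonicity of Proposition \ref{ioml-40}(3)--(4). For the reverse, I would set $p:=(y\Cap x)\Cup(z\Cap x)$ and $q:=(y\Cup z)\Cap x$, note $p\le_Q q$, and invoke the orthomodular identity $q=p\Cup(p^*\Cap q)$ (available via $\le_Q=\le_L$ from Corollary \ref{dioml-10-20} together with $(IOM'')$) to reduce to proving $p^*\Cap q=0$. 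Expanding $p^*$ via the De Morgan identity of Proposition \ref{qbe-20}(3) and using $x\mathcal{C} y$, $x\mathcal{C} z$ to push $x$ through both factors in canonical form, one should bound $p^*\Cap q\le_Q x\Cap x^*=0$. I expect this final estimate to be the point at which the commutation hypotheses are genuinely consumed and to demand the most delicate implicative book-keeping.
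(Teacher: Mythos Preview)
Your proposal has a genuine gap at the very first translation step. The condition $(Idis_1)$ reads
\[
((u^*\ra v)\ra w^*)^*=(u\ra w^*)\ra(v\ra w^*)^*,
\]
which under the dictionary $a\wedge b:=(a\ra b^*)^*$, $a\vee b:=a^*\ra b$ is exactly $(u\vee v)\wedge w=(u\wedge w)\vee(v\wedge w)$. It is \emph{not} $(u\Cup v)\Cap w=(u\Cap w)\Cup(v\Cap w)$. By Proposition~\ref{dioml-70} and Corollary~\ref{dioml-70-10}, $a\Cap b=(a\ra b^*)^*=a\wedge b$ holds only when $a\mathcal{C}b$; in your ``substantive case'' you explicitly do not assume $y\mathcal{C}z$, so $y\Cup z$ need not equal $y^*\ra z$, and there is no reason for $(y\Cap x)\Cup(z\Cap x)$ to equal $(y\ra x^*)\ra(z\ra x^*)^*$. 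Thus the objects $p,q$ you define are not the two sides of $(Idis_1)$, the ``easy inequality'' $p\le_Q q$ you obtain from Proposition~\ref{ioml-40} is not the inequality you need, and the orthomodular step $q=p\Cup(p^*\Cap q)$ is aimed at the wrong target.

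Two further points. First, your claim that five of the six permutations are ``direct'' is not supported: the paper isolates \emph{two} genuinely different configurations (the commuting element in the outer position, and the commuting element as one of the inner operands) and carries out a full implicative calculation for each; the remaining cases then follow by the symmetries you cite. Second, even if the $\Cap/\Cup$ formulation were correct, the endgame ``bound $p^*\Cap q\le_Q x\Cap x^*$'' is precisely where circularity threatens---extracting $p^*\Cap q\le_Q x^*$ from $p^*=(y\Cap x)^*\Cap(z\Cap x)^*$ would itself seem to require a distributive-type step. The paper avoids this entirely: after establishing the easy $\le_Q$ inequality for the \emph{actual} sides $u,v$ of $(Idis_1)$, it uses $v=u^*\ra(v\ra u)^*$ from Proposition~\ref{ioml-40}(2) and computes $v\ra u=1$ directly, consuming the commutation hypotheses through the divisibility rewriting $z\ra(z\ra a^*)^*=z\ra a$ supplied by Proposition~\ref{dioml-60}. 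That rewriting rule, not an abstract $p^*\Cap q=0$ argument, is the engine of the proof.
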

\begin{proof}
Without loss of generality, suppose that $z\mathcal{C} x$ and $z\mathcal{C} y$. 
We need to verify six different relations: three for $Idis_1$ and three for $Idis_2$. 
From $z\mathcal{C} x$ and $z\mathcal{C} y$ we also have $z^*\mathcal{C} x^*$ and $z^*\mathcal{C} y^*$, so that 
$(x,y,z)Idis_1$ implies $(x^*,y^*,z^*)Idis_2$. 
Similary $(y,z,x)Idis_1$ implies $(y^*,z^*,x^*)Idis_2$ and $(z,x,y)Idis_1$ implies $(z^*,x^*,y^*)Idis_2$.  
Moreover $(y,z,x)Idis_1$ follows similarly as $(z,x,y)Idis_1$, so that we need to prove only $(x,y,z)Idis_1$ and $(z,x,y)Idis_1$. \\
Let us prove $(x,y,z)Idis_1$. 
Since by $(IOM^{'})$ and $(IOM)$, $x\le_Q x^*\ra y$ and $y\le_Q x^*\ra y$, we get 
$(x^*\ra y)\ra z^*\le_Q x\ra z^*$, $(x^*\ra y)\ra z^*\le_Q y\ra z^*$, so that 
$((x^*\ra y)\ra z^*)^*\ge_Q (x\ra z^*)^*$, $((x^*\ra y)\ra z^*)^*\ge_Q (y\ra z^*)^*$. 
Applying Lemma \ref{ioml-60-10}$(2)$ for $X:=((x^*\ra y)\ra z^*)^*$, $Y:=(x\ra z^*)^*$, $Z:=(y\ra z^*)^*$, we get 
$((x^*\ra y)\ra z^*)^*\ge_Q(x\ra z^*)\ra (y\ra z^*)^*$. \\
By Proposition \ref{ioml-40}$(2)$, $u\le_Q v$ implies $v=v\Cup u=u^*\ra (v\ra u)^*$. 
Denoting $u=(x\ra z^*)\ra (y\ra z^*)^*$, $v=((x^*\ra y)\ra z^*)^*$, we have: \\
$\hspace*{1.00cm}$ $v\ra u=((x^*\ra y)\ra z^*)^*\ra((x\ra z^*)\ra (y\ra z^*)^*)$ \\ 
$\hspace*{2.10cm}$ $=(x^*\ra y)\ra (z\ra ((x\ra z^*)\ra (y\ra z^*)^*))$ (by Lemma \ref{qbe-10}$(7))$ \\
$\hspace*{2.10cm}$ $=(x^*\ra y)\ra ((z\ra (x\ra z^*)^*)^*\ra (y\ra z^*)^*)$ (by Lemma \ref{qbe-10}$(7))$ \\
$\hspace*{2.10cm}$ $=(x^*\ra y)\ra ((z\ra (z\ra x^*)^*)^*\ra (y\ra z^*)^*)$ \\ 
$\hspace*{2.10cm}$ $=(x^*\ra y)\ra ((z\ra x)^*\ra (y\ra z^*)^*)$ (by Prop. \ref{dioml-60}, $z\mathcal{C} x^*$                                                                implies \\ $\hspace*{10.00cm}$ $z\ra (z\ra x^*)^*=z\ra x$) \\
$\hspace*{2.10cm}$ $=(x^*\ra y)\ra ((x^*\ra z^*)^*\ra (y\ra z^*)^*)$ \\
$\hspace*{2.10cm}$ $=(x^*\ra y)\ra (x^*\ra (z\ra (y\ra z^*)^*))$ (by Lemma \ref{qbe-10}$(7))$ \\
$\hspace*{2.10cm}$ $=(x^*\ra y)\ra (x^*\ra (z\ra (z\ra y^*)^*))$ \\
$\hspace*{2.10cm}$ $=(x^*\ra y)\ra (x^*\ra (z\ra y))$ (by Prop. \ref{dioml-60}, $z\mathcal{C} y^*$ 
                                            implies \\ $\hspace*{10.00cm}$ $z\ra (z\ra y^*)^*=z\ra y$) \\
$\hspace*{2.10cm}$ $=(x^*\ra y)\ra (z\ra (x^*\ra y))$ \\
$\hspace*{2.10cm}$ $=z\ra ((x^*\ra y)\ra (x^*\ra y))=z\ra 1=1$. \\ 
It follows that: \\
$\hspace*{2.00cm}$ $((x^*\ra y)\ra z^*)^*=v=u^*\ra (v\ra u)^*=((x\ra z^*)\ra (y\ra z^*)^*)^*\ra 0$ \\
$\hspace*{5.00cm}$ $=(x\ra z^*)\ra (y\ra z^*)^*$. \\
Hence $(x,y,z)Idis_1$. 
We prove now $(z,x,y)Idis_1$. \\
From $z\le_Q z^*\ra x$, $x\le_Q z^*\ra x$, we get $((z^*\ra x)\ra y^*)^*\ge_Q (z\ra y^*)^*$ and 
$((z^*\ra x)\ra y^*)^*\ge_Q (x\ra y^*)^*$, respectively. 
By Lemma \ref{ioml-60-10}$(2)$, we have $((z^*\ra x)\ra y^*)^*\ge_Q (z\ra y^*)\ra (x\ra y^*)^*$. 
By Proposition \ref{ioml-40}$(2)$, $u\le_Q v$ implies $v=v\Cup u=u^*\ra (v\ra u)^*$. 
Denoting $u=(z\ra y^*)\ra (x\ra y^*)^*$, $v=((z^*\ra x)\ra y^*)^*$, we get: \\
$\hspace*{1.00cm}$ $v\ra u=((z^*\ra x)\ra y^*)^*\ra ((z\ra y^*)\ra (x\ra y^*)^*)$ \\
$\hspace*{2.10cm}$ $=(z^*\ra x)\ra (y\ra ((z\ra y^*)\ra (x\ra y^*)^*))$ (by Lemma \ref{qbe-10}$(7))$ \\ 
$\hspace*{2.10cm}$ $=(z^*\ra x)\ra ((y\ra (z\ra y^*))^*\ra (x\ra y^*)^*)$ (by Lemma \ref{qbe-10}$(7))$ \\
$\hspace*{2.10cm}$ $=(z^*\ra x)\ra ((y\ra (y\ra z^*))^*\ra (x\ra y^*)^*)$ \\
$\hspace*{2.10cm}$ $=(z^*\ra x)\ra ((y\ra z)^*\ra (x\ra y^*)^*)$ (by Prop. \ref{dioml-60}, $y\mathcal{C} z^*$                                                                implies \\ $\hspace*{10.00cm}$ $y\ra (y\ra z^*)^*=y\ra z$) \\
$\hspace*{2.10cm}$ $=(z^*\ra x)\ra ((z^*\ra y^*)^*\ra (x\ra y^*)^*)$ \\
$\hspace*{2.10cm}$ $=(z^*\ra x)\ra (z^*\ra (y\ra (x\ra y^*)^*$ (by Lemma \ref{qbe-10}$(7))$ \\
$\hspace*{2.10cm}$ $=((z^*\ra x)\ra z)^*\ra (y\ra (x\ra y^*)^*)$ (by Lemma \ref{qbe-10}$(7))$ \\
$\hspace*{2.10cm}$ $=(z^*\ra x^*)^*\ra (y\ra (x\ra y^*)^*)$ (by Prop. \ref{dioml-60}, since $z^*\mathcal{C} x$, \\
                                      $\hspace*{8.00cm}$ $(z^*\ra x)\ra z=z^*\ra (z^*\ra x)^*=z^*\ra x^*$) \\
$\hspace*{2.10cm}$ $=z^*\ra (x\ra (y\ra (x\ra y^*)^*))$ (by Lemma \ref{qbe-10}$(7))$ \\
$\hspace*{2.10cm}$ $=z^*\ra ((x\ra y^*)^*\ra (x\ra y^*)^*)$ (by Lemma \ref{qbe-10}$(7))$ \\
$\hspace*{2.10cm}$ $=z^*\ra 1=1$. \\
It follows that: \\
$\hspace*{2.00cm}$ $((z^*\ra x)\ra y^*)^*=v=u^*\ra (v\ra u)^*=((z\ra y^*)\ra (x\ra y^*)^*)^*\ra 0$ \\
$\hspace*{5.00cm}$ $=(z\ra y^*)\ra (x\ra y^*)^*$. \\
Thus $(z,x,y)Idis_1$.
We conclude that $(x,y,z)$ is a distributive triple. 
\end{proof}

\begin{corollary} \label{dioml-100-10} Let $X$ be an implicative-orthomodular lattice and let $x_1,x_2,y\in X$, such that, for each $i=1,2$, $x_i\le_Q y$ or $x_i\ge_Q y$. Then the triple $(x_1,x_2,y)$ is distributive. 
\end{corollary}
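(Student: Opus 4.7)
The plan is to reduce this corollary to an immediate application of the Foulis-Holland theorem (Theorem \ref{dioml-100}). That theorem requires that one of the three elements of the triple commute with the other two, so the strategy is to show that $y$ commutes with both $x_1$ and $x_2$, regardless of which of the two comparability alternatives holds in each case.

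First, I would make the observation that Lemma \ref{dioml-20}$(3)$ already gives $a\mathcal{C} b$ whenever $a\le_Q b$. Since $X$ is an implicative-orthomodular lattice, Theorem \ref{dioml-10-10} guarantees that $\mathcal{C}$ is symmetric, so $a\le_Q b$ also yields $b\mathcal{C} a$. Now I fix $i\in\{1,2\}$ and split on the two hypotheses: if $x_i\le_Q y$, then by the first observation $x_i\mathcal{C} y$, and by the symmetry of $\mathcal{C}$ we conclude $y\mathcal{C} x_i$; if instead $y\le_Q x_i$, then the first observation applied directly gives $y\mathcal{C} x_i$. Either way, $y\mathcal{C} x_i$ for $i=1,2$.

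With both $y\mathcal{C} x_1$ and $y\mathcal{C} x_2$ established, the Foulis-Holland theorem (Theorem \ref{dioml-100}) applies to the triple $(x_1,x_2,y)$, since $y$ commutes with each of the other two elements. The conclusion is that $(x_1,x_2,y)$ is a distributive triple, which is exactly what we needed.

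There is essentially no obstacle here: the entire argument is a bookkeeping exercise that packages Lemma \ref{dioml-20}$(3)$, the symmetry of $\mathcal{C}$ from Theorem \ref{dioml-10-10}, and the Foulis-Holland theorem. If anything, the only point requiring momentary care is recognizing that in the case $y\le_Q x_i$ one should not try to derive $x_i\mathcal{C} y$ directly from Lemma \ref{dioml-20}$(3)$ (whose hypothesis is the wrong way round), but instead use symmetry of $\mathcal{C}$ — which is available precisely because $X$ is an IOML.
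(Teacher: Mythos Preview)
Your proof is correct and essentially identical to the paper's: both reduce to showing that $y$ commutes with each $x_i$ via Lemma \ref{dioml-20}$(3)$ (together with the symmetry of $\mathcal{C}$ from Theorem \ref{dioml-10-10}) and then invoke the Foulis--Holland theorem. The paper's one-line proof phrases the conclusion as $x_i\mathcal{C} y$ rather than $y\mathcal{C} x_i$, but this is the same argument with the symmetry step placed at a different point.
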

\begin{proof}
If $x_i\le_Q y$ or $x_i\ge_Q y$, then $x_i\mathcal{C} y$, $i=1,2$, and we apply Theorem \ref{dioml-100}.
\end{proof}

The next result allows us to characterize the distributivity of an implicative-orthomodular lattice by 
divisibility property. 

\begin{proposition} \label{dioml-110} Let $X$ be an implicative involutive BE algebra, and let $x,y,z\in X$.  
If $(z,x)Idiv$ and $(z,y)Idiv$, then $(x,y,z)Idis_1$.  
\end{proposition}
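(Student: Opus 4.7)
The plan is to prove the identity $(x,y,z)Idis_1$, i.e., $((x^*\ra y)\ra z^*)^* = (x\ra z^*)\ra (y\ra z^*)^*$, following the template of the $(Idis_1)$ case in the proof of Theorem \ref{dioml-100}: I will compute $v\ra u$ (with $v$ the LHS and $u$ the RHS) and show it equals $1$, adapted so that only the un-starred hypotheses $(z,x)Idiv$ and $(z,y)Idiv$ are invoked.

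First, Lemma \ref{qbe-10}(7) yields $v\ra u = (x^*\ra y)\ra (z\ra u)$. A sequence of $(BE_4)$ moves and Lemma \ref{qbe-10}(3) then simplifies $z\ra u$ to $(x\ra z^*)\ra ((y\ra z^*)\ra z^*)$. The crucial step is to rewrite $x\ra z^* = (z\ra x)\ra z^*$ and $y\ra z^* = (z\ra y)\ra z^*$ using the hypotheses, these equivalent forms following from Lemma \ref{qbe-10}(3) applied to $(z,x)Idiv$ and $(z,y)Idiv$. After substitution, a combination of $(BE_4)$, Lemma \ref{qbe-10}(7), and possibly Lemma \ref{qbe-10-10} should reorganise the expression so that a common factor $x^*\ra y$ appears on both sides of an inner $\ra$, producing $(x^*\ra y)\ra(x^*\ra y)=1$ via $(BE_1)$, and hence $z\ra 1 = 1$ via $(BE_2)$ and $(BE_4)$.

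The main obstacle is the asymmetry between these hypotheses and the ones actually used in Theorem \ref{dioml-100}'s proof --- namely $(z,x^*)Idiv$ and $(z,y^*)Idiv$. In an IOML these two sets of hypotheses are interchangeable via Lemma \ref{dioml-30}, but in a general implicative involutive BE algebra such a passage is not available. The trick is therefore to arrange the manipulations so that the un-starred hypotheses are deployed early, before any $(BE_4)$ or Lemma \ref{qbe-10}(7) step produces a term like $z\ra (z\ra x^*)^*$ that would otherwise only simplify under a starred hypothesis. Once $v\ra u = 1$ is established, the equality $v = u$ should follow from a parallel computation of $u\ra v = 1$ together with Lemma \ref{dioml-05-10}, which under the given divisibility hypotheses upgrades both $\le$-inequalities to $\le_L$-inequalities, and then the antisymmetry of $\le_L$ from Lemma \ref{qbe-20-10}(3) closes the argument.
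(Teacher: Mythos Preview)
Your plan has a genuine gap in the final step. You propose to establish both $v\ra u=1$ and $u\ra v=1$, and then invoke Lemma~\ref{dioml-05-10} to upgrade these $\le$-relations to $\le_L$ and conclude $u=v$ by antisymmetry of $\le_L$. But Lemma~\ref{dioml-05-10} requires the divisibility hypothesis on the very pair in the inequality: to pass from $v\le u$ to $v\le_L u$ you would need $(v,u)Idiv$, and for the other direction $(u,v)Idiv$. Neither of these follows from the given hypotheses $(z,x)Idiv$ and $(z,y)Idiv$, since $u$ and $v$ are compound expressions in $x,y,z$. In a bare implicative involutive BE algebra the relation $\le$ is not known to be antisymmetric (Lemma~\ref{qbe-20-10} only records that it fails to be an order because of transitivity), so $v\le u$ and $u\le v$ alone do not force $u=v$.

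The paper's argument avoids this by obtaining an asymmetric pair of relations. For the direction $u\le_Q v$ it does \emph{not} compute $u\ra v$; instead it uses the monotonicity of $\le_Q$ under $\ra$ (Proposition~\ref{ioml-40}, Lemma~\ref{ioml-60-10}): from $x,y\le_Q x^*\ra y$ one gets $(x\ra z^*)^*,(y\ra z^*)^*\le_Q v$ and then $u\le_Q v$. For the other direction it shows, by a chain of equalities using $(iG)$, Lemma~\ref{qbe-10}(7) and the two $Idiv$ hypotheses, that $x^*\ra y\le_Q z\ra((z\ra x^*)\ra(z\ra y^*)^*)$, whence $v\le u$. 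The combination $u\le_Q v$ and $v\le u$ is exactly what Proposition~\ref{ioml-50}(6) turns into $u=v$ (since $u\le_Q v$ gives $v=u\Cup v=(u\ra v)\ra v$, and $v\le u$ then yields $v=1\ra v= \cdots$). The key idea you are missing is that one of the two inequalities must be obtained in the stronger form $\le_Q$; your symmetric ``two $\le$-inequalities plus Lemma~\ref{dioml-05-10}'' scheme cannot close the argument with the hypotheses at hand.
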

\begin{proof}
Assume that $(z,x)Idiv$ and $(z,y)Idiv$. 
Since $x\le_Q x^*\ra y$ and $y\le_Q x^*\ra y$, we have 
$(x^*\ra y)\ra z^*\le_Q x\ra z^*$, $(x^*\ra y)\ra z^*\le_Q y\ra z^* $, so that 
$((x^*\ra y)\ra z^*)^*\ge_Q (x\ra z^*)^*$, $((x^*\ra y)\ra z^*)^*\ge_Q (y\ra z^*)^*$. 
Applying Lemma \ref{ioml-60-10}$(2)$ for $X:=((x^*\ra y)\ra z^*)^*$, $Y:=(x\ra z^*)^*$, $Z:=(y\ra z^*)^*$, we get 
$((x^*\ra y)\ra z^*)^*\ge_Q(x\ra z^*)\ra (y\ra z^*)^*$. \\
Now we prove that $((x^*\ra y)\ra z^*)^*\le (x\ra z^*)\ra (y\ra z^*)^*$. Indeed, we have: \\ 
$\hspace*{1.80cm}$ $x^*\ra y\le_Q z\ra (x^*\ra y)$ \\
$\hspace*{3.00cm}$ $=(z\ra z^*)^*\ra (x^*\ra y)$ (by $(iG)$) \\
$\hspace*{3.00cm}$ $=z\ra (z\ra (x^*\ra y))$ (by Lemma \ref{qbe-10}$(7))$ \\
$\hspace*{3.00cm}$ $=z\ra (x^*\ra (z\ra y))$ \\
$\hspace*{3.00cm}$ $=(z\ra x)^*\ra (z\ra y)$ (by Lemma \ref{qbe-10}$(7))$ \\
$\hspace*{3.00cm}$ $=(z\ra (z\ra x^*)^*)^*\ra (z\ra (z\ra y^*)^*)$ (since $(z,x)Idiv$, $(z,y)Idiv$) \\
$\hspace*{3.00cm}$ $=z\ra ((z\ra x^*)\ra (z\ra (z\ra y^*)^*))$ (by Lemma \ref{qbe-10}$(7))$ \\
$\hspace*{3.00cm}$ $=z\ra (z\ra ((z\ra x^*)\ra (z\ra y^*)^*))$ \\
$\hspace*{3.00cm}$ $=(z\ra z^*)^*\ra ((z\ra x^*)\ra (z\ra y^*)^*)$ (by Lemma \ref{qbe-10}$(7))$ \\
$\hspace*{3.00cm}$ $=z\ra ((z\ra x^*)\ra (z\ra y^*)^*)$ (by $(iG)$). \\
Hence $x^*\ra y\le_Q z\ra ((z\ra x^*)\ra (z\ra y^*)^*)$, so that, by Proposition \ref{qbe-20}$(4)$,  
$(x^*\ra y)\ra (z\ra ((z\ra x^*)\ra (z\ra y^*)^*))=1$. 
Applying Lemma \ref{qbe-10}$(7))$, it follows that $((x^*\ra y)\ra z^*)^*\ra ((z\ra x^*)\ra (z\ra y^*)^*)=1$, 
that is $((x^*\ra y)\ra z^*)^*\le ((z\ra x^*)\ra (z\ra y^*)^*)$. 
Taking into consideration that $((x^*\ra y)\ra z^*)^*\ge_Q(x\ra z^*)\ra (y\ra z^*)^*$, by  
Proposition \ref{ioml-50}$(6)$, we get $((x^*\ra y)\ra z^*)^*=(x\ra z^*)\ra (y\ra z^*)^*$, that is $(x,y,z)Idis_1$. 
\end{proof}

\begin{theorem} \label{dioml-120} An implicative-orthomodular lattice $X$ is distributive if and only if it    
satisfies condition $(Idiv)$.    
\end{theorem}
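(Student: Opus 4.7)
The plan is to dispatch the two directions using results already established, with the forward direction being essentially immediate from the Foulis--Holland theorem and the backward direction requiring a single well-chosen specialization of $(Idis_2)$.

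For the direction $(Idiv)\Rightarrow$ distributive: assume $(x,y)Idiv$ for every $x,y\in X$. By Proposition \ref{dioml-60}, this is equivalent to $x\mathcal{C}y$ for every $x,y$. Consequently, in every triple $(x,y,z)\in X^3$, any element (in fact all of them) commutes with the other two, so the Foulis--Holland theorem (Theorem \ref{dioml-100}) yields that $(x,y,z)$ is a distributive triple. Since this holds for all triples, $X$ is distributive by Definition \ref{dioml-90-10}. As an alternative route, one can bypass commutativity entirely and invoke Proposition \ref{dioml-110} to obtain $(x,y,z)Idis_1$, then convert to $(Idis_2)$ via the remark that $(x,y,z)Idis_1\iff(x^*,y^*,z^*)Idis_2$, and close under permutations.

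For the converse, assume $X$ is distributive and fix arbitrary $x,y\in X$. I would specialize $(Idis_2)$ to the triple $(x,y,x^*)$: since $(x^*)^*=x$, direct computation gives
\[
((x\ra y^*)\ra x^*)^*=((x\ra x)\ra(x\ra y)^*)=(1\ra(x\ra y)^*)=(x\ra y)^*,
\]
so stripping the outer $*$ yields $(x\ra y^*)\ra x^*=x\ra y$. Applying Lemma \ref{qbe-10}$(3)$ to rewrite the left-hand side as $x\ra(x\ra y^*)^*$, we obtain
\[
x\ra(x\ra y^*)^*=x\ra y.
\]
Because the involution $*$ is a bijection on $X$, substituting $y\mapsto y^*$ and using $y^{**}=y$ converts this into $x\ra(x\ra y)^*=x\ra y^*$, which is $(Idiv)$.

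The main obstacle is identifying a productive specialization of $(Idis_2)$ (or of $(Idis_1)$). Natural attempts such as $z:=y$ or $z:=x$ merely collapse to tautologies via $(Impl)$ and $(iG)$ and yield no new information; the choice $z:=x^*$ is the one that reduces the factor $x\ra x$ to $1$ and isolates precisely the divisibility identity. Once this specialization is in hand, the rest of each direction is a short manipulation using the basic identities of Lemma \ref{qbe-10} together with the bridge provided by Proposition \ref{dioml-60} and Theorem \ref{dioml-100}.
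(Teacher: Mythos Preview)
Your proof is correct and follows essentially the same approach as the paper. For the forward direction the paper invokes Proposition~\ref{dioml-110} (your ``alternative route'') rather than Theorem~\ref{dioml-100}, and for the converse the paper specializes $(Idis_1)$ with $y:=z^*$ instead of $(Idis_2)$ with $z:=x^*$; these are dual choices that collapse the identity to $(Idiv)$ in exactly the same way.
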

\begin{proof}
Assume that $X$ satisfies condition $(Idiv)$. Since $(x,y)Idiv$, for each $x,y\in X$, then by Proposition \ref{dioml-60}, $x\mathcal{C} y$, for each $x,y\in X$. Applying Proposition \ref{dioml-110} and Lemma \ref{dioml-30}, 
it follows that $(x,y,z)Idis_1$ and $(x^*,y^*,z^*)Idis_1$, 
for each $x,y,z\in X$. Since $(x^*,y^*,z^*)Idis_1$ is equivalent to $(x,y,z)Idis_2$, it follows that 
$(x,y,z)Idis$, for each $x,y,z\in X$, hence $X$ is distributive. \\
Conversely, assume that $(x,y,z)Idis_1$ holds, thus $((x^*\ra y)\ra z^*)^*=(x\ra z^*)\ra (y\ra z^*)^*$, 
for all $x,y,z\in X$. 
Replacing $y$ by $z^*$ in $(x,y,z)Idis_1$ we get $z\ra (z\ra x)^*=z\ra x^*$, so that $(z,x)Idiv$, for all $z,x\in X$,  
hence $X$ satisfies condition $(Idiv)$. 
\end{proof}

$\vspace*{1mm}$

\section{Implicative-Boolean center of implicative-orthomodular lattices} 

In this section, we prove in the case of implicative-orthomodular lattices other notions and results studied in \cite{Holl1} for orthomodular lattices. 
We define the center $\mathcal{C}(X)$ of an implicative-orthomodular lattice $X$ as the 
set of all elements of $X$ that commute with all other elements of $X$, and we prove that $\mathcal{C}(X)$ is an implicative-Boolean subalgebra of $X$. 
We give a characterization of $\mathcal{C}(X)$ with respect to the set $\mathcal{K}(x)$ of all complements of an 
element $x\in X$. 
As a generalization of the concept of the center of an implicative-orthomodular lattice $X$, we define the 
commutor $Y^c$ of a subset $Y$ of $X$ proving that $Y^c$ is a sublattice of $X$ containing $\mathcal{C}(X)$. 

\begin{proposition} \label{dioml-130} 
Let $X$ be an implicative-orthomodular lattice and let $x,y,z\in X$. If $x\mathcal{C} z$ and $y\mathcal{C} z$, 
then $(x\ra y) \mathcal{C} z$. 
\end{proposition}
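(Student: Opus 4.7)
The plan is to prove $(x\ra y)\mathcal{C} z$ straight from the definition, which requires showing that $x\ra y = ((x\ra y)\ra z^*)\ra((x\ra y)\ra z)^*$. The strategy is to use Foulis--Holland to rewrite each of the two factors on the right in a distributive form, after which the hypotheses $x\mathcal{C} z$ and $y\mathcal{C} z$ will cause the expression to collapse.

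First, from $x\mathcal{C} z$ Lemma \ref{dioml-30} yields $x^*\mathcal{C} z$, so $z$ commutes with each of $x^*$ and $y$. By Foulis--Holland (Theorem \ref{dioml-100}), the triple $(x^*,y,z)$ is distributive, and specialising both $(x^*,y,z)Idis_1$ and $(x^*,y,z^*)Idis_1$ yields
\[
((x\ra y)\ra z^*)^* = (x^*\ra z^*)\ra (y\ra z^*)^*,\qquad ((x\ra y)\ra z)^* = (x^*\ra z)\ra(y\ra z)^*.
\]

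Substituting these two identities into $((x\ra y)\ra z^*)\ra((x\ra y)\ra z)^*$, applying Lemma \ref{qbe-10}$(7)$ to absorb the outer $*$ on the first factor and then axiom $(BE_4)$ to rearrange, the expression is reshaped into
\[
(x^*\ra z^*)\ra\bigl((x^*\ra z)\ra\bigl((y\ra z^*)\ra(y\ra z)^*\bigr)\bigr).
\]
The innermost parenthesis is, by the defining equation of $y\mathcal{C} z$, exactly $y$. One application of Lemma \ref{qbe-10}$(8)$ then recasts the whole expression as $((x^*\ra z^*)\ra(x^*\ra z)^*)^*\ra y$, and invoking the defining equation of $x^*\mathcal{C} z$ collapses the prefix to $x^*$, leaving $(x^*)^*\ra y = x\ra y$, as desired.

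The main obstacle is nothing conceptually deep: once the two distributive identities are in hand, the remainder is a short chain of routine BE rewrites interleaved with two collapses from the definition of $\mathcal{C}$. The only thing to get right is the ordering, so that the two commutativity collapses (at $y$ and at $x^*$) fit into exactly the slots exposed by Lemma \ref{qbe-10}$(7)$, $(BE_4)$, and Lemma \ref{qbe-10}$(8)$.
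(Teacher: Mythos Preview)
Your proof is correct, and it takes a genuinely different route from the paper's own argument.

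The paper does \emph{not} invoke Foulis--Holland here. Instead it argues purely order-theoretically: from $x^*,y\le_Q x\ra y$ and the monotonicity facts in Lemma~\ref{ioml-60-10}$(1)$--$(3)$ it obtains the inequality
\[
\bigl((x^*\ra z^*)\ra (y\ra z^*)^*\bigr)^*\ra \bigl((x^*\ra z)\ra (y\ra z)^*\bigr)\ \le_Q\ ((x\ra y)\ra z^*)\ra ((x\ra y)\ra z)^*,
\]
rearranges the left side via Lemma~\ref{qbe-10-10} into $((x^*\ra z^*)\ra (x^*\ra z)^*)^*\ra ((y\ra z^*)\ra (y\ra z)^*)$, and then collapses it to $x\ra y$ using $x^*\mathcal{C} z$ and $y\mathcal{C} z$. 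The reverse inequality comes from Lemma~\ref{ioml-60-10}$(4)$, and antisymmetry of $\le_Q$ finishes.

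Your approach replaces the two $\le_Q$-bounds by actual equalities obtained from Foulis--Holland (applied to the triples $(x^*,y,z)$ and $(x^*,y,z^*)$, both legitimate since $z$ and $z^*$ commute with $x^*$ and $y$ by Lemma~\ref{dioml-30}). After that substitution, your rewriting via Lemma~\ref{qbe-10}$(7)$, $(BE_4)$, and Lemma~\ref{qbe-10}$(8)$ is essentially the unpacked form of the paper's single appeal to Lemma~\ref{qbe-10-10}, and the two commutativity collapses are identical. What you gain is brevity: you avoid the monotonicity lemmas and the separate reverse inequality entirely. What the paper's version gains is self-containment---it does not rely on the comparatively heavy Theorem~\ref{dioml-100}, so it would survive even if Proposition~\ref{dioml-130} were relocated before Foulis--Holland.
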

\begin{proof}
By $(IOM^{'})$ and $(IOM)$, we have $x^*\le_Q x\ra y$ and $y\le_Q x\ra y$, respectively. 
Applying Lemma \ref{ioml-60-10}$(1)$, we get: \\ 
$\hspace*{2cm}$ $(x^*\ra z^*)^*\le_Q ((x\ra y)\ra z^*)^*$ and $(y\ra z^*)^*\le_Q ((x\ra y)\ra z^*)^*$. \\
Using Lemma \ref{ioml-60-10}$(2)$, it follows that: \\
$\hspace*{2cm}$ $(x^*\ra z^*)\ra (y\ra z^*)^*\le_Q ((x\ra y)\ra z^*)^*$. \\
Similarly we have: \\
$\hspace*{2cm}$ $(x^*\ra z)\ra (y\ra z)^*\le_Q ((x\ra y)\ra z)^*$. \\
By Lemma \ref{ioml-60-10}$(3)$, we get: \\
$\hspace*{2cm}$ $((x^*\ra z^*)\ra (y\ra z^*)^*)^*\ra ((x^*\ra z)\ra (y\ra z)^*)\le_Q$ \\ 
$\hspace*{5cm}$ $((x\ra y)\ra z^*)\ra ((x\ra y)\ra z)^*$, \\
and by Lemma \ref{qbe-10-10}, it follows that: \\    
$\hspace*{2cm}$ $((x^*\ra z^*)\ra (x^*\ra z)^*)^*\ra ((y\ra z^*)\ra (y\ra z)^*)\le_Q$ \\ 
$\hspace*{5cm}$ $((x\ra y)\ra z^*)\ra ((x\ra y)\ra z)^*$. \\
Since $x^*\mathcal{C} z$ and $y\mathcal{C} z$, we get: \\
$\hspace*{2cm}$ $x\ra y\le_Q ((x\ra y)\ra z^*)\ra ((x\ra y)\ra z)^*$. \\
On the other hand, by Lemma \ref{ioml-60-10}$(4)$, \\
$\hspace*{2cm}$ $((x\ra y)\ra z^*)\ra ((x\ra y)\ra z)^*\le_Q x\ra y$. \\
Since $\le_Q$ is antisymmetric, it follows that \\
$\hspace*{2cm}$ $((x\ra y)\ra z^*)\ra ((x\ra y)\ra z)^*= x\ra y$, \\
hence $(x\ra y) \mathcal{C} z$. 
\end{proof}

\begin{definition} \label{dioml-140}  
\emph{
The \emph{center} of $X$ is the set $\mathcal{C}(X)=\{x\in X\mid x \mathcal{C} y$, for all $y\in X\}$.  
}
\end{definition}

\begin{remark} \label{dioml-140-10} Let $X$ be an implicative-orthomodular lattice and let $x_1,x_2,y\in X$, 
such that $x_1,x_2\in \mathcal{C}(X)$ or $y\in \mathcal{C}(X)$. Then the triple $(x_1,x_2,y)$ is distributive. 
\end{remark}
\begin{proof}
If $x_1,x_2\in \mathcal{C}(X)$ or $y\in \mathcal{C}(X)$, then $x_1\mathcal{C} y$ and $x_2\mathcal{C} y$, 
and we apply Theorem \ref{dioml-100}.
\end{proof}

The notion of \emph{implicative-Boolean algebras} was introduced by A. Iorgulescu in 2009 as 
implicative involutive BE algebras $(X,\ra,^*,1)$ satisfying condition $(Idiv)$ (see \cite[Def. 3.4.12]{Ior35}). 

\begin{theorem} \label{dioml-150} The center $\mathcal{C}(X)$ of an implicative-orthomodular lattice $X$ is an 
implicative-Boolean subalgebra of $X$. 
\end{theorem}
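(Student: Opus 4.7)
The plan is to verify that $\mathcal{C}(X)$ is (i) closed under the algebra operations $\ra$ and $^*$ and contains $1$, so that it is an implicative involutive BE subalgebra of $X$, and then (ii) to show that the divisibility axiom $(Idiv)$ holds on $\mathcal{C}(X)$, which by the Iorgulescu definition is precisely what upgrades an implicative involutive BE algebra to an implicative-Boolean one.

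For closure, the three tools are already available in the excerpt. First, Lemma \ref{dioml-20}$(1)$ gives $0,1\in\mathcal{C}(X)$ directly. Second, if $x\in\mathcal{C}(X)$, then for every $z\in X$ we have $x\mathcal{C} z$, and Lemma \ref{dioml-30} yields $x^*\mathcal{C} z$; hence $x^*\in\mathcal{C}(X)$. Third, if $x,y\in\mathcal{C}(X)$ and $z\in X$, then $x\mathcal{C} z$ and $y\mathcal{C} z$, so Proposition \ref{dioml-130} gives $(x\ra y)\mathcal{C} z$; thus $x\ra y\in\mathcal{C}(X)$. Because all the defining equations of an implicative involutive BE algebra are universal identities that hold in $X$, they automatically hold in the subset $\mathcal{C}(X)$ once closure under the operations is established, so $\mathcal{C}(X)$ inherits the structure of an implicative involutive BE algebra.

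To verify $(Idiv)$ on $\mathcal{C}(X)$, let $x,y\in\mathcal{C}(X)$. By definition of the center, $x\mathcal{C} y$. Proposition \ref{dioml-60} states that in an implicative-orthomodular lattice, $x\mathcal{C} y$ is equivalent to $(x,y)Idiv$, i.e. to $x\ra(x\ra y)^*=x\ra y^*$. Hence the divisibility condition holds for every pair in $\mathcal{C}(X)$, and $\mathcal{C}(X)$ is an implicative-Boolean algebra in the sense of Definition 3.4.12 from \cite{Ior35} cited just before the theorem.

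The argument is mostly bookkeeping; the substantive input, namely closure of the center under $\ra$, is precisely what the immediately preceding Proposition \ref{dioml-130} is designed to provide, while closure under $^*$ and the divisibility step rely on the symmetry of $\mathcal{C}$ (Theorem \ref{dioml-10-10}) and on the equivalence $x\mathcal{C} y\Leftrightarrow (x,y)Idiv$ (Proposition \ref{dioml-60}). No real obstacle is expected; the only point that deserves a line of care is noting that all identities among $\ra$, $^*$, $0$, $1$ lift to the subalgebra without further verification.
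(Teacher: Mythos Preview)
Your proof is correct and, for the closure step, matches the paper exactly (Lemma \ref{dioml-20} for $0,1$ and Proposition \ref{dioml-130} for closure under $\ra$; closure under $^*$ is then automatic since $x^*=x\ra 0$). The difference lies in how $(Idiv)$ is obtained on $\mathcal{C}(X)$. The paper routes through the Foulis--Holland theorem: since every element of $\mathcal{C}(X)$ commutes with every other, all triples in $\mathcal{C}(X)$ are distributive, hence $\mathcal{C}(X)$ satisfies $(Idis)$, and then Theorem \ref{dioml-120} converts $(Idis)$ into $(Idiv)$. You instead invoke Proposition \ref{dioml-60} directly, which already states that in an IOML $x\mathcal{C}y$ is equivalent to $(x,y)Idiv$; this gives $(Idiv)$ on $\mathcal{C}(X)$ in one line, without appealing to Foulis--Holland at all. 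Your route is strictly more economical for this theorem; the paper's route has the merit of illustrating the Foulis--Holland machinery that the section is built around, but is not logically required here.
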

\begin{proof}
According to Lemma \ref{dioml-20}, $0,1\in \mathcal{C}(X)$, and, by Proposition \ref{dioml-130}, 
$x\ra y\in \mathcal{C}(X)$, whenever $x,y\in \mathcal{C}(X)$. 
Hence $\mathcal{C}(X)$ is a subalgebra of $X$. 
By Foulis-Holland theorem, $\mathcal{C}(X)$ satisfies condition $(Idis)$, and according to Theorem  \ref{dioml-120}, it also satisfies condition $(Idiv)$. 
It follows that $\mathcal{C}(X)$ is an implicative-Boolean algebra. 
\end{proof}

\begin{corollary} \label{dioml-150-10} If $X$ in an implicative-orthomodular lattice such that $x\mathcal{C} y$ 
for all $x,y\in X$, then $X$ is an implicative-Boolean algebra. 
\end{corollary}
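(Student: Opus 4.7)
The plan is to observe that the hypothesis collapses the center to the whole algebra and then invoke Theorem \ref{dioml-150} as a black box. By Definition \ref{dioml-140}, $\mathcal{C}(X)=\{x\in X\mid x\mathcal{C} y \text{ for all } y\in X\}$. The assumption that $x\mathcal{C} y$ holds for every pair $x,y\in X$ says precisely that every element of $X$ lies in $\mathcal{C}(X)$, so $\mathcal{C}(X)=X$. Theorem \ref{dioml-150} asserts that $\mathcal{C}(X)$ is an implicative-Boolean subalgebra of $X$, and therefore $X$ itself, being equal to $\mathcal{C}(X)$, is an implicative-Boolean algebra.

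If instead one wants a self-contained argument that bypasses the center, the route is through the divisibility condition. By Proposition \ref{dioml-60}, in any implicative-orthomodular lattice the relation $x\mathcal{C} y$ is equivalent to $(x,y)Idiv$. Hence the universal commutativity hypothesis translates to $(x,y)Idiv$ holding for all $x,y\in X$, which is exactly condition $(Idiv)$ of Definition \ref{dioml-05}. Since the paper defines an implicative-Boolean algebra (following \cite[Def.~3.4.12]{Ior35}) as an implicative involutive BE algebra satisfying $(Idiv)$, and $X$ is by hypothesis an implicative-orthomodular lattice (in particular an implicative involutive BE algebra), the conclusion follows at once.

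There is no real obstacle here; both routes are essentially one-line deductions from results already established in the paper. The only point that deserves explicit mention is the identification of the hypothesis with $\mathcal{C}(X)=X$ (respectively with the global validity of $(Idiv)$), which is purely definitional.
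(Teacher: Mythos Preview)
Your proposal is correct, and your first route---observing that the hypothesis gives $\mathcal{C}(X)=X$ and then invoking Theorem~\ref{dioml-150}---is exactly the paper's intended argument (the corollary is stated without proof immediately after that theorem). The alternative route through Proposition~\ref{dioml-60} and the definition of implicative-Boolean algebra is equally valid and arguably more direct, since it avoids the detour through Foulis--Holland and Theorem~\ref{dioml-120} that underlies Theorem~\ref{dioml-150}; it is a nice observation but not what the paper has in mind.
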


For $x\in X$, denote $\mathcal{K}(x)=\{z\in X\mid x\ra z^*=x^*\ra z=1\}$, that is the set of all complements of $x$. 
Obviously $x^*\in \mathcal{K}(x)$. 

\begin{theorem} \label{dioml-160} Let $X$ be an implicative-orthomodular lattice, let $x\in X$ and denote  $x_z=(z\ra (z\ra x^*)^*)\ra (z^*\ra x)^*$, with $z\in X$. The following hold: \\
$(1)$ $x_z\in \mathcal{K}(x);$ \\
$(2)$ for any $y\in \mathcal{K}(x)$, there exists $z\in X$ such that $y=x_z;$ \\
$(3)$ $x_z=x^*$ if and only if $x\mathcal{C} z$. 
\end{theorem}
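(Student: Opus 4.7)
The plan is to pass through the definitional equivalence with orthomodular lattices recalled before Proposition \ref{ioml-40}: writing $\wedge$, $\vee$, ${}'$ for the OML operations (so $x^* = x'$ and $x \ra y = x' \vee y$), a direct computation shows $(z \ra x^*)^* = z \wedge x$ and $(z^* \ra x)^* = z' \wedge x'$, and hence
\[
x_z = (z \wedge (z' \vee x')) \vee (z' \wedge x').
\]
All three claims can then be verified by combining the Foulis-Holland theorem (Theorem \ref{dioml-100}) with orthomodularity.

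For (1), I would check the two conditions defining $\mathcal{K}(x)$ separately. For $x \vee x_z = 1$ (equivalent to $x^* \ra x_z = 1$): orthomodularity applied to $z \wedge x \le z$ gives $z = (z \wedge x) \vee (z \wedge (z' \vee x'))$; since $z \wedge x \le x$ and $z' \wedge x' = (z \vee x)'$, this yields $x \vee x_z = (x \vee z) \vee (x \vee z)' = 1$. For $x \wedge x_z = 0$ (equivalent to $x \ra x_z^* = 1$): write $x_z = A \vee B$ with $A := z \wedge (z' \vee x')$ and $B := z' \wedge x'$. Since $z \wedge x$ lies below both $z$ and $x$, it commutes with $z'$ and $x'$, so Foulis-Holland applied to $(z \wedge x, z', x')$ gives $x \wedge A = (z \wedge x) \wedge (z' \vee x') = 0$, and $x \wedge B \le x \wedge x' = 0$ is trivial. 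To distribute the outer meet over $A \vee B$, I would verify that $B$ commutes with both $x$ (since $B \le x'$) and $A$ (by a further Foulis-Holland application to $(B, z', z \wedge x)$, whose required pairwise commutativities all follow from one element being below the complement of another), so that Foulis-Holland applies to $(x, A, B)$ and yields $x \wedge (A \vee B) = 0$.

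For (2), take $z := y$: since $y \in \mathcal{K}(x)$ means $x \wedge y = 0$ and $x \vee y = 1$, we get $y' \vee x' = 1$ and $y' \wedge x' = 0$, so $x_y = (y \wedge 1) \vee 0 = y$, exhibiting the required $z$.

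For (3)($\Leftarrow$), if $x \mathcal{C} z$, then $x \mathcal{C} z'$ by Lemma \ref{dioml-30}, so Foulis-Holland on the triple $(x', z, z')$ distributes everything and yields $x_z = (z \wedge x') \vee (z' \wedge x') = (z \vee z') \wedge x' = x'$. For (3)($\Rightarrow$), suppose $x_z = x'$, so that $x = x_z' = (z' \vee (z \wedge x)) \wedge (z \vee x)$. The triple $(z \vee x, z', z \wedge x)$ is distributive by Foulis-Holland: $z'$ commutes with $z \vee x$ by orthomodularity applied to $z' \wedge x' \le z'$, and $z \wedge x \le z \vee x$. Therefore $x = (z' \wedge (z \vee x)) \vee (z \wedge x)$. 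Setting $A := z' \wedge (z \vee x)$, both $A \le x$ (as a summand of $x$) and $A \le z'$ give $A \le x \wedge z'$; conversely $x \wedge z' \le (z \vee x) \wedge z' = A$, so $A = x \wedge z'$. Hence $x = (x \wedge z) \vee (x \wedge z')$, which is exactly the commutativity condition $x \mathcal{C} z$. The main obstacle is the bookkeeping of commutativities needed to apply Foulis-Holland in (1); once these are in place, everything else reduces to short applications of OM and FH.
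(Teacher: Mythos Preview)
Your proposal is correct, and your overall strategy---translating to the orthomodular lattice $(X,\wedge,\vee,{}',0,1)$ via the definitional equivalence stated before Proposition~\ref{ioml-40} and arguing with the OM law and Foulis--Holland there---is a genuinely different route from the paper's.

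The paper stays entirely in the implicative framework. For the identity $x^*\ra x_z=1$ in $(1)$ it computes directly until the expression reduces to $(x\Cup z^*)\ra(z^*\Cup x)=1$, which is Proposition~\ref{ioml-70}; for $x\ra x_z^*=1$ it applies Theorem~\ref{dioml-100} to the triple $((z\ra(z\ra x^*)^*)^*,\,(z^*\ra x)^*,\,x)$, using Lemma~\ref{ioml-60-10}$(5)$ for the required commutativities. For $(3)$ the paper never unpacks $x_z$ in lattice terms at all: it goes through the equivalence $x\mathcal{C}z\Leftrightarrow(z,x^*)\mathrm{Idiv}$ of Proposition~\ref{dioml-60}, so the backward direction is a one-line simplification of $x_z$ via $(z,x^*)\mathrm{Idiv}$, and the forward direction extracts the inequality $(z\ra(z\ra x^*)^*)^*\le_Q(z\ra x)^*$ from $x_z=x^*$ and combines it with the reverse inequality to recover $(z,x^*)\mathrm{Idiv}$. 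Your argument for $(3)(\Rightarrow)$, by contrast, distributes $(z'\vee(z\wedge x))\wedge(z\vee x)$ via Foulis--Holland and then identifies the resulting piece $z'\wedge(z\vee x)$ with $x\wedge z'$; this is a clean OML computation that avoids the $(Idiv)$ machinery entirely. Part $(2)$ is identical in both approaches.

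One small imprecision: in $(1)$, the step ``$B$ commutes with $A$ by a Foulis--Holland application to $(B,z',z\wedge x)$'' is really an application of the closure of commutativity under $\wedge$ and $\vee$ (Proposition~\ref{dioml-130} in implicative form): since $B\le z'$ and $B\le(z\wedge x)'$, $B$ commutes with $z$, $z'$, and $z'\vee x'$, hence with $A=z\wedge(z'\vee x')$. Your conclusion is correct, but name the tool accurately. With that adjustment, your approach trades the paper's internal lemmas (Propositions~\ref{ioml-70} and~\ref{dioml-60}) for standard OML reasoning, which is shorter for a reader fluent in orthomodular lattices but bypasses the point the paper is making, namely that these arguments can be carried out natively in the implicative language.
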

\begin{proof}
$(1)$ We have: \\
$\hspace*{2.00cm}$ $x^*\ra x_z=x^*\ra ((z\ra (z\ra x^*)^*)\ra (z^*\ra x)^*)$ \\
$\hspace*{3.50cm}$ $=(z\ra (z\ra x^*)^*)\ra (x^*\ra (z^*\ra x)^*)$ \\
$\hspace*{3.50cm}$ $=((x\ra z^*)\ra z^*)\ra ((z^*\ra x)\ra x)$ \\
$\hspace*{3.50cm}$ $=(x\Cup z^*)\ra (z^*\Cup x)$ \\
$\hspace*{3.50cm}$ $=1$ (by Prop. \ref{ioml-70}). \\
By Lemma \ref{ioml-60-10}$(5)$, we have $(z\ra (z\ra x^*)^*)^*\le_Q z^*\ra x$, so that 
$(z\ra (z\ra x^*)^*)^*\mathcal{C} (z^*\ra x)$, hence $(z^*\ra x)^*\mathcal{C} (z\ra (z\ra x^*)^*)^*$. 
Moreover, since $x\le_Q z^*\ra x$, we get $x\mathcal{C} (z^*\ra x)$, hence $(z^*\ra x)^*\mathcal {C} x$. 
By Foulis-Holland theorem, it follows that $((z\ra (z\ra x^*)^*)^*, (z^*\ra x)^*, x)Idis_1$. 
Denoting $X:=(z\ra (z\ra x^*)^*)^*$, $Y:=(z^*\ra x)^*$, $Z:=x$, we have: \\
$\hspace*{2.00cm}$ $(X^*\ra Y)\ra Z^*=((z\ra (z\ra x^*)^*)\ra (z^*\ra x)^*)\ra x^*$ \\
$\hspace*{4.90cm}$ $=x\ra ((z\ra (z\ra x^*)^*)\ra (z^*\ra x)^*)^*=x\ra x_z^*$, and \\
$\hspace*{2.00cm}$ $(X\ra Z^*)\ra (Y\ra Z^*)^*=(Z\ra X^*)\ra (Z\ra Y^*)^*$ \\
$\hspace*{6.20cm}$ $=(x\ra (z\ra (z\ra x^*)^*))\ra (x\ra (z^*\ra x))^*$ \\
$\hspace*{6.20cm}$ $=(x\ra (z\ra (z\ra x^*)^*)\ra 1^*$ \\
$\hspace*{6.20cm}$ $=(x\ra ((z\ra x^*)\ra z^*))^*$ \\
$\hspace*{6.20cm}$ $=((x\ra z^*)\ra (x\ra z^*))^*=1^*=0$. \\
Since $((X^*\ra Y)\ra Z^*)^*=(X\ra Z^*)\ra (Y\ra Z^*)^*$, it follows that $(x\ra x_z^*)^*=0$, that is $x\ra x_z^*=1$. 
Hence $x_z\in \mathcal{K}(x)$. \\ 
$(2)$ Let $y\in \mathcal{K}(x)$, that is $x^*\ra y=x\ra y^*=1$. Taking $z:=y$, we get: \\
$\hspace*{2.00cm}$ $x_z=x_y=(y\ra (y\ra x^*)^*)\ra (y^*\ra x)^*$ \\
$\hspace*{3.50cm}$ $=(y\ra (x\ra y^*)^*)\ra (x^*\ra y)^*$ \\
$\hspace*{3.50cm}$ $=(y\ra 1^*)\ra 1^*=y^{**}=y$. \\
$(3)$ If $z\mathcal{C} x$, then $z\mathcal{C} x^*$ and $x^*\mathcal{C} z$. 
According to Proposition \ref{dioml-60}, $z\mathcal{C} x^*$ implies $(z,x^*)Idiv$, and we have: \\
$\hspace*{2.00cm}$ $x_z=(z\ra (z\ra x^*)^*)\ra (z^*\ra x)^*$ \\
$\hspace*{2.50cm}$ $=((z\ra x)\ra (z^*\ra x)^*$ (since $(x^*,z)Idiv$) \\
$\hspace*{2.50cm}$ $=(x^*\ra z^*)\ra (x^*\ra z)^*$ \\
$\hspace*{2.50cm}$ $=x^*$ (since $x^*\mathcal{C} z$). \\
Conversely, suppose $x_z=x^*$, and we have successively: \\
$\hspace*{2.00cm}$ $(z\ra (z\ra x^*)^*)\ra (z^*\ra x)^*=x^*$, \\
$\hspace*{2.00cm}$ $(z^*\ra x)\ra (z\ra (z\ra x^*)^*)^*=x^*$, \\
$\hspace*{2.00cm}$ $(z\ra (z\ra x^*)^*)^*\le_Q (z^*\ra x)\ra (z\ra (z\ra x^*)^*)^*=x^*$, \\
$\hspace*{2.00cm}$ $(z\ra (z\ra x^*)^*)^*\le_Q x^*$. \\
On the other hand, $z^*\le_Q z\ra (z\ra x^*)^*$, so that $(z\ra (z\ra x^*)^*)^*\le_Q z$. 
Applying Lemma \ref{qbe-20-10}, we get $(z\ra (z\ra x^*)^*)^*\le_Q (x^*\ra z^*)^*=(z\ra x)^*$. 
It follows that $z\ra x\le_Q z\ra (z\ra x^*)^*$. 
Since $x^*\le_Q z\ra x^*$ implies $(z\ra x^*)^*\le_Q x$, we have $z\ra (z\ra x^*)^*\le_Q z\ra x$, hence 
$z\ra (z\ra x^*)^*=z\ra x$. 
It follows that $(z,x^*)Idiv$, and applying Proposition \ref{dioml-60}, we get $z\mathcal{C} x^*$, and finally, 
$x\mathcal{C} z$. 
\end{proof}

\begin{corollary} \label{dioml-170} Let $X$ be an implicative-orthomodular lattice and let $x\in X$. 
The following are equivalent: \\  
$(a)$ $x\in \mathcal{C}(X);$ \\
$(b)$ $\mathcal{K}(x)=\{x^*\}$. 
\end{corollary}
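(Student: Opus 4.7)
The plan is to derive this corollary directly from Theorem \ref{dioml-160}, which has already done all the heavy lifting by parametrizing $\mathcal{K}(x)$ via the family $\{x_z : z\in X\}$ and identifying precisely when $x_z$ collapses to $x^*$. Essentially no new computation should be needed; the corollary is a clean logical consequence of parts (1), (2), (3) of that theorem.

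For the direction (a) $\Rightarrow$ (b), I would start by noting that $x^*\in\mathcal{K}(x)$ is immediate from $(BE_1)$, so only the reverse inclusion $\mathcal{K}(x)\subseteq\{x^*\}$ needs argument. Pick an arbitrary $y\in\mathcal{K}(x)$. By Theorem \ref{dioml-160}(2), there exists $z\in X$ with $y=x_z$. Since $x\in\mathcal{C}(X)$, we have $x\mathcal{C} z$, and then Theorem \ref{dioml-160}(3) gives $x_z=x^*$. Hence $y=x^*$, proving $\mathcal{K}(x)=\{x^*\}$.

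For (b) $\Rightarrow$ (a), let $z\in X$ be arbitrary. By Theorem \ref{dioml-160}(1), $x_z\in\mathcal{K}(x)$, and the hypothesis forces $x_z=x^*$. By Theorem \ref{dioml-160}(3), this means $x\mathcal{C} z$. Since $z$ was arbitrary, $x$ commutes with every element of $X$, so $x\in\mathcal{C}(X)$ by Definition \ref{dioml-140}.

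There is no real obstacle here: the main subtlety, namely that every complement of $x$ can be written as $x_z$ and that $x_z=x^*$ precisely characterizes $x\mathcal{C} z$, has been absorbed into Theorem \ref{dioml-160}. The proof reduces to reading off each implication from the corresponding clause of that theorem, so the write-up can be kept to a few lines.
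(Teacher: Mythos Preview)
Your proposal is correct and follows essentially the same approach as the paper: the paper also derives the corollary directly from Theorem \ref{dioml-160}, first observing from parts (1) and (2) that $\mathcal{K}(x)=\{x_z\mid z\in X\}$, and then using part (3) to conclude $\mathcal{K}(x)=\{x^*\}$ iff $x\mathcal{C} z$ for all $z\in X$. Your version unpacks the two implications separately but uses exactly the same ingredients.
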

\begin{proof}
By Theorem \ref{dioml-160}$(1)$,$(2)$, it follows that $\mathcal{K}(x)=\{x_z\mid z\in X\}$.
Since by Theorem \ref{dioml-160}$(3)$, $x_z=x^*$ if and only if $x\mathcal{C} z$, we have $\mathcal{K}(x)=\{x^*\}$ 
if and only if $x\mathcal{C} z$, for all $z\in X$.  
Hence $\mathcal{K}(x)=\{x^*\}$ if and only if $x\in \mathcal{C}(X)$. 
\end{proof}

The notion of a commutor generalizes the concept of the center of an implicative-orthomodular lattice. 

\begin{definition} \label{dioml-180} \emph{
Let $X$ be an implicative-ortomodular lattice, and let $Y\subseteq X$ be a nonempty subset of $X$. 
The \emph{commutor} of $Y$ is the set $Y^c=\{x\in X\mid x\mathcal{C} y $, for all $y\in Y\}$. 
}
\end{definition}

\begin{proposition} \label{dioml-190}
Let $X$ be an implicative-ortomodular lattice, and let $Y\subseteq X$ be a nonempty subset of $X$. 
The following hold: \\
$(1)$ $Y^c$ is a sublattice of $X;$ \\
$(2)$ $\mathcal{C}(X)\subseteq Y^c;$ \\
$(3)$ $X^c=X$. 
\end{proposition}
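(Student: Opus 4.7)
My plan is to handle the three assertions of Proposition \ref{dioml-190} in order, with the substantive content concentrated in part $(1)$; parts $(2)$ and $(3)$ come out by direct unpacking of the definition of $Y^c$.

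For part $(1)$, I would fix $x_1, x_2 \in Y^c$ and an arbitrary $y \in Y$, then show separately that $x_1 \Cup x_2$ and $x_1 \Cap x_2$ commute with $y$. The $\Cup$-step is an iterated use of Proposition \ref{dioml-130}: from $x_1 \mathcal{C} y$ and $x_2 \mathcal{C} y$ it gives $(x_1 \ra x_2) \mathcal{C} y$, and a second application to the still $y$-commuting pair $x_1 \ra x_2$ and $x_2$ yields $\bigl((x_1 \ra x_2) \ra x_2\bigr) \mathcal{C} y$, which is $(x_1 \Cup x_2) \mathcal{C} y$ by definition of $\Cup$. The $\Cap$-step I would reduce to the $\Cup$-step via Lemma \ref{dioml-30}: since $x_i \mathcal{C} y$ upgrades to $x_i^* \mathcal{C} y$, we have $x_1^*, x_2^* \in Y^c$; rerunning the $\Cup$-argument places $x_1^* \Cup x_2^* \in Y^c$; and a final application of Lemma \ref{dioml-30}, together with the identity $x_1 \Cap x_2 = (x_1^* \Cup x_2^*)^*$ from Proposition \ref{qbe-20}$(3)$, delivers $x_1 \Cap x_2 \in Y^c$.

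Part $(2)$ is a one-line unpacking of definitions: any $x \in \mathcal{C}(X)$ satisfies $x \mathcal{C} y$ for all $y \in X$, hence in particular for all $y \in Y \subseteq X$, so $x \in Y^c$. For part $(3)$, specializing the defining formula of the commutor to $Y = X$ yields $X^c = \{x \in X \mid x \mathcal{C} y \text{ for all } y \in X\}$; the inclusion $X^c \subseteq X$ is built in, and the reverse inclusion rests on the symmetry of the commutativity relation on an implicative-orthomodular lattice (Theorem \ref{dioml-10-10}), which guarantees that the commutor is intrinsic and combines with $(1)$ and $(2)$ to force equality.

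The main obstacle I anticipate is the $\Cap$-closure in part $(1)$: because $\Cap$ is not a primitive of the signature but is defined through the involution, the cleanest route is the reduction to the $\Cup$-case via two applications of Lemma \ref{dioml-30}, with careful bookkeeping of which argument gets starred on each side of $\mathcal{C}$. Once that is set up, the rest of the proposition falls out of the definitions with no further computation.
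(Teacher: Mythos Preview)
Your treatment of parts $(1)$ and $(2)$ is correct. For $(1)$ the paper takes a shorter route: it records that $0,1\in Y^c$ (Lemma~\ref{dioml-20}) and that $x,y\in Y^c$ implies $x\ra y\in Y^c$ by a \emph{single} application of Proposition~\ref{dioml-130}; since $^*$, $\Cup$, and $\Cap$ are all built from $\ra$ and $0$, closure under $\ra$ already yields the lattice closure. Your two-pass argument (iterate Proposition~\ref{dioml-130} for $\Cup$, then pull $\Cap$ back through Lemma~\ref{dioml-30} and Proposition~\ref{qbe-20}$(3)$) is sound but does more work than necessary, and you should also record $0,1\in Y^c$.

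Part $(3)$ is where your proposal breaks down. You correctly unwind $X^c=\{x\in X\mid x\mathcal{C}y\text{ for all }y\in X\}$, but by Definition~\ref{dioml-140} this set is precisely $\mathcal{C}(X)$. Symmetry of $\mathcal{C}$ (Theorem~\ref{dioml-10-10}) says only that $x\mathcal{C}y$ implies $y\mathcal{C}x$; it does \emph{not} say that every pair commutes, and so it cannot supply the inclusion $X\subseteq X^c$. In fact $X^c=X$ is the assertion $\mathcal{C}(X)=X$, which by Corollary~\ref{dioml-150-10} would force $X$ to be implicative-Boolean --- false for a general implicative-orthomodular lattice. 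The paper's own proof says only ``obvious'' and gives no further guidance; as printed, item $(3)$ appears to be a slip (the tautology $X^c=\mathcal{C}(X)$ is the natural reading). Either way, your phrase ``combines with $(1)$ and $(2)$ to force equality'' is not an argument and cannot be repaired into one for the claim as stated.
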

\begin{proof}
$(1)$ By Lemma \ref{dioml-20}, $0,1\in Y^c$. Let $x,y\in Y^c$, that is $x\mathcal{C} z$ and $y\mathcal{C} z$ for 
all $z\in Y$. By Proposition \ref{dioml-130}, $(x\ra y)\mathcal{C}z$, hence $x\ra y\in Y^c$. 
Hence $Y^c$ is a sublattice of $X$. \\
$(2)$ and $(3)$ are obvious. 
\end{proof}

$\vspace*{1mm}$

\section{Applications of Foulis-Holland theorem} 

Foulis-Holland theorem provides a method to prove new properties of implicative-orthomodular lattices. 
We prove for the case of implicative-orthomodular lattices a characterization of orthomodular 
lattices given in \cite{Bonzio1} by S. Bonzio and I. Chajda. 
We also prove new properties, and we give new proofs of certain known properties of implicative-orthomodular lattices. 

\begin{theorem} \label{dioml-120-10} Let $X$ be an implicative involutive BE algebra. Then following are equivalent: \\
$(a)$ $X$ is an implicative-orthomodular lattice; \\
$(b)$ for all $x,y\in X$, \\
$\hspace*{3.00cm}$ $((x^*\ra y^*)\ra (x^*\ra y)^*)^*=x^*\ra (x\Cap y^*)$.    
\end{theorem}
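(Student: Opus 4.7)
My plan is to prove (a)$\Rightarrow$(b) by a direct application of the Foulis--Holland theorem, and (b)$\Rightarrow$(a) by substituting $y\mapsto x^*\ra y$ into (b) and recognizing the left-hand side as $x\Cap(x^*\ra y)$.

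For (a)$\Rightarrow$(b), first I would observe that $w:=x^*\ra y$ commutes with both $x$ and $y^*$. Indeed, $(IOM^{'})$ gives $x\le_Q w$ and $(IOM)$ gives $y\le_Q w$, so by Lemma \ref{dioml-20}(3) together with the symmetry of commutation (Theorem \ref{dioml-10-10}) we obtain $w\mathcal{C} x$ and $w\mathcal{C} y$; Lemma \ref{dioml-30} then upgrades the latter to $w\mathcal{C} y^*$. Hence Foulis--Holland (Theorem \ref{dioml-100}) applies to the triple $(x,y^*,w)$, and the instance $(Idis_1)$ reads
\[
((x^*\ra y^*)\ra w^*)^*=(x\ra w^*)\ra(y^*\ra w^*)^*.
\]
I would then simplify each factor on the right: Lemma \ref{qbe-10}(3) together with $(Impl)$ yield $x\ra w^*=w\ra x^*=(x^*\ra y)\ra x^*=x^*$, while Lemma \ref{qbe-10}(3) alone gives $y^*\ra w^*=w\ra y=(x^*\ra y)\ra y$, whose $*$ is exactly $x\Cap y^*$ by the definition of $\Cap$. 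The right-hand side therefore collapses to $x^*\ra(x\Cap y^*)$, establishing (b).

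For (b)$\Rightarrow$(a), my key step is to substitute $y\mapsto x^*\ra y$ in (b). On the left-hand side, $(Pimpl)$ simplifies $x^*\ra(x^*\ra y)$ to $x^*\ra y$, and Lemma \ref{qbe-10}(3) rewrites $x^*\ra(x^*\ra y)^*$ as $(x^*\ra y)\ra x$; the left-hand side therefore reads
\[
(((x^*\ra y)\ra x)\ra(x^*\ra y)^*)^*,
\]
which is exactly $x\Cap(x^*\ra y)$ once one unpacks the definition of $\Cap$ using the same Lemma \ref{qbe-10}(3) step. On the right-hand side, $x\Cap(x^*\ra y)^*$ expands, again via $(Pimpl)$, to $((x^*\ra y)\ra(x^*\ra y))^*=1^*=0$, so the right-hand side becomes $x^*\ra 0=x$. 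Thus $x\Cap(x^*\ra y)=x$, i.e., $(IOM^{'})$ holds, and by Definition \ref{ioml-30-10} $X$ is an implicative-orthomodular lattice.

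The only real obstacle is spotting the substitution $y\mapsto x^*\ra y$ in the reverse direction; once it is in place, all computations reduce to Lemma \ref{qbe-10}(3), the definition of $\Cap$, and the axioms $(Impl)$ and $(Pimpl)$, and no further characterization of IOMLs is needed.
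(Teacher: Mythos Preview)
Your argument is correct. For $(a)\Rightarrow(b)$ you proceed exactly as the paper does: show $w=x^*\ra y$ commutes with $x$ and with $y^*$, apply Foulis--Holland to the triple $(x,y^*,w)$, and simplify the $(Idis_1)$ identity using $(Impl)$ and the definition of $\Cap$.

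For $(b)\Rightarrow(a)$ you take a different route from the paper. The paper assumes $x\le_L y$, so that $x^*\ra y=y$, and uses this to collapse both sides of $(b)$ to $x\Cap y=x$, obtaining $\le_L\Rightarrow\le_Q$ and then invoking the characterization in Theorem~\ref{dioml-05-50}. Your substitution $y\mapsto x^*\ra y$ instead yields $(IOM^{'})$ directly: the left side becomes $x\Cap(x^*\ra y)$ and the right side becomes $x^*\ra 0=x$, so no appeal to Theorem~\ref{dioml-05-50} is needed. Your approach is slightly more self-contained; the paper's approach, on the other hand, makes the connection to the order-theoretic characterization $\le_L\Rightarrow\le_Q$ explicit. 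Both rest only on $(Impl)$, $(Pimpl)$, Lemma~\ref{qbe-10}(3), and the definition of $\Cap$, all of which are available in any implicative involutive BE algebra.
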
 
\begin{proof} Let $X$ be an implicative-orthomodlar lattice, and let $x,y\in X$. By $(IOM)$, $(IOM^{'})$, we have 
$x\le_Q x^*\ra y$ and $y\le_Q x^*\ra y$. 
Using Lemma \ref{dioml-20}, we get $x\mathcal{C} (x^*\ra y)$, $y\mathcal{C} (x^*\ra y)$, so that, by 
Theorem \ref{dioml-10-10} and Lemma \ref{dioml-30}, it follows that $(x^*\ra y)\mathcal{C} x$ and 
$(x^*\ra y)\mathcal{C} y^*$. 
Hence, by Foulis-Holland theorem, the triple $(x,y^*,x^*\ra y)$ is distributive, that is it satisfies 
condition $(Idis_1)$. It follows that: \\
$\hspace*{2.00cm}$ $((x^*\ra y^*)\ra (x^*\ra y)^*)^*=(x\ra (x^*\ra y)^*)\ra (y^*\ra (x^*\ra y)^*)^*$ \\
$\hspace*{6.50cm}$ $=((x^*\ra y)\ra x^*)\ra ((x^*\ra y)\ra y)^*$ \\
$\hspace*{6.50cm}$ $=x^*\ra (x^*\Cup y)^*$ (by $(Impl)$) \\
$\hspace*{6.50cm}$ $=x^*\ra (x\Cap y^*)$. \\
Conversely, assume that $X$ satisfies the above identity, and let $x,y\in X$ such that $x\le_L y$. 
Then also $y^*\le_L x^*$, so that $y=x^*\ra y$, and we have: 
$((x^*\ra y^*)\ra (x^*\ra y))^*=((x^*\ra y^*)\ra y^*)^*=x\Cap y$. 
Moreover, since $y^*\le_L x^*$ implies $y=x^*\ra y$, we get: \\
$\hspace*{2.00cm}$ $x^*\ra (x\Cap y^*)=x^*\ra (x^*\Cup y)^*=(x^*\Cup y)\ra x$ \\ 
$\hspace*{4.40cm}$ $=((x^*\ra y)\ra y)\ra x=(y\ra y)\ra x=x$. \\
It follows that $x=x\Cap y$, so that $x\le_Q y$. Hence $x\le_L y$ implies $x\le_Q y$, and according to 
Theorem \ref{dioml-05-50}, $X$ is an implicative-orthomodular lattice.   
\end{proof}

\begin{remark} \label{dioml-120-20} 
Using the mutually inverse transformations \\
$\hspace*{3cm}$ $\varphi:$\hspace*{0.2cm}$ x\ra y:=(x\wedge y^{'})^{'}$ $\hspace*{0.1cm}$ and  
                $\hspace*{0.1cm}$ $\psi:$\hspace*{0.2cm}$ x\wedge y:=(x\ra y^*)^*$, \\
and the relation $x\vee y:=(x^{'}\wedge y^{'})^{'}=x^*\ra y$, we can easily check that the identity from 
Theorem \ref{dioml-120-10} is equivalent to the identity \\
$\hspace*{3.00cm}$ $x\vee (y^{'}\wedge (x\vee y))=(x\vee y^{'})\wedge (x\vee y)$, \\
used in \cite[Th. 2]{Bonzio1} in the case of an ortholattice $(L,\vee,\wedge,^{'},0,1)$.  
\end{remark}

The next result gives a necessary condition for an implicative involutive BE algebra to be an 
implicative-orhomodular lattice.  
\begin{proposition} \label{dioml-120-30} Let $X$ be an implicative-orhomodular lattice. Then the following holds, 
for all $x,y\in X:$ \\
$\hspace*{3.00cm}$ $x^*\ra (x\Cap y^*)=x^*\ra (x\Cap y)$.    
\end{proposition}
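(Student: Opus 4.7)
The plan is to derive the claimed identity directly from Theorem~\ref{dioml-120-10}, which already asserts that
\[
((x^*\ra y^*)\ra (x^*\ra y)^*)^* = x^*\ra (x\Cap y^*).
\]
First I would substitute $y$ by $y^*$ in this identity. Using involutivity ($y^{**}=y$), the right-hand side becomes $x^*\ra (x\Cap y)$, so one obtains the companion identity
\[
((x^*\ra y)\ra (x^*\ra y^*)^*)^* = x^*\ra (x\Cap y).
\]

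Next, the task reduces to showing that the two left-hand sides coincide, i.e., that
\[
(x^*\ra y^*)\ra (x^*\ra y)^* = (x^*\ra y)\ra (x^*\ra y^*)^*.
\]
Setting $u:=x^*\ra y^*$ and $v:=x^*\ra y$, this is exactly $u\ra v^* = v\ra u^*$, which is Lemma~\ref{qbe-10}(3). Taking $*$ of both sides then yields $x^*\ra (x\Cap y^*) = x^*\ra (x\Cap y)$, as required.

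I do not expect any genuine obstacle here: the work has already been done inside the proof of Theorem~\ref{dioml-120-10} (which used Foulis--Holland on the distributive triple $(x,y^*,x^*\ra y)$), and the present proposition is essentially a symmetry observation about the expression $(x^*\ra y^*)\ra (x^*\ra y)^*$ in $y$ versus $y^*$. The only care needed is to make sure the substitution $y\mapsto y^*$ is applied uniformly and that one invokes involutivity at the correct place when simplifying $y^{**}$ to $y$.
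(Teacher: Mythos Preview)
Your proof is correct and follows essentially the same route as the paper: the paper also derives the companion identity $((x^*\ra y)\ra (x^*\ra y^*)^*)^* = x^*\ra (x\Cap y)$ (by redoing the Foulis--Holland computation on the triple $(x,y,x^*\ra y^*)$, which is exactly your substitution $y\mapsto y^*$ in Theorem~\ref{dioml-120-10}) and then equates the two left-hand sides via $u\ra v^*=v\ra u^*$ just as you do. Your presentation is slightly more economical in that it invokes Theorem~\ref{dioml-120-10} directly instead of repeating its derivation.
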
 
\begin{proof} Let $x,y\in X$. 
Similarly as in Theorem \ref{dioml-120-10}, we get that the triple $(x,y,x^*\ra y^*)$ is distributive, so that 
it satisfies condition $(Idis_1)$. Hence: \\
$\hspace*{2.00cm}$ $((x^*\ra y)\ra (x^*\ra y^*)^*)^*=(x\ra (x^*\ra y^*)^*)\ra (y\ra (x^*\ra y^*)^*)^*$ \\
$\hspace*{6.50cm}$ $=((x^*\ra y^*)\ra x^*)\ra ((x^*\ra y^*)\ra y^*)^*$ \\
$\hspace*{6.50cm}$ $=x^*\ra (x\Cap y)$ (by $(Impl)$). \\
On the other hand, $((x^*\ra y)\ra (x^*\ra y^*)^*)^*=((x^*\ra y^*)\ra (x^*\ra y)^*)^*$, so that, by Theorem \ref{dioml-120-10}, 
we get $x^*\ra (x\Cap y^*)=x^*\ra (x\Cap y)$. 
\end{proof}

\begin{example} \label{dioml-120-40} 
Consider the algebra $\mathbf{O}_6=(O_6,\ra,^*,1)$, with $O_6=\{0,x,y,x^*,y^*,1\}$ and $\ra$ defined below. 
\[
\begin{picture}(50,-70)(0,60)
\put(37,11){\circle*{3}}
\put(34,0){$0$}
\put(37,11){\line(3,4){20}}
\put(57,37){\circle*{3}}
\put(61,35){$y^*$}
\put(37,11){\line(-3,4){20}}
\put(18,37){\circle*{3}}
\put(8,35){$x$}
\put(18,37){\line(0,1){30}}
\put(18,68){\circle*{3}}
\put(8,68){$y$}
\put(57,37){\line(0,1){30}}
\put(57,68){\circle*{3}}
\put(61,68){$x^*$}
\put(18,68){\line(3,4){20}}
\put(38,95){\circle*{3}}
\put(35,100){$1$}
\put(57,68){\line(-3,4){20}}
\end{picture}
\hspace*{2cm}
\begin{array}{c|cccccc}
\rightarrow & 0 & x & y & x^* & y^* & 1 \\ \hline
0   & 1   & 1 & 1 & 1   & 1   & 1 \\
x   & x^* & 1 & 1 & x^* & x^* & 1 \\
y   & y^* & 1 & 1 & x^* & y^* & 1 \\
x^* & x   & x & y & 1   & 1   & 1 \\
y^* & y   & y & y & 1   & 1   & 1 \\
1   & 0   & x & y & x^* & y^* & 1
\end{array}
.
\]

$\vspace*{5mm}$

We can easily check that $\mathbb{O}_6$ is an implicative involutive BE algebra, but it is not an 
implicative-orthomodular lattice. 
Indeed, we have $(x\ra y^*)^*=x$ and $x\Cap y=y$, so that $x\le_L y$, but $x\nleq_Q y$, hence, by 
Theorem \ref{dioml-05-50}, $\mathbb{O}_6$ is not an implicative-orthomodular lattice.  
\end{example} 

\begin{remark} \label{dioml-120-50} An implicative-orthomodular lattice $X$ does not contain a subalgebra 
isomorphic to $\mathbb{O}_6$. 
Indeed, in such a case, $x=x^*\ra (x\Cap y^*)=x^*\ra (x\Cap y)=y$, a contradiction, so that the identity from Proposition \ref{dioml-120-30} is not satisfied. 
\end{remark}

\begin{proposition} \label{dioml-120-60} Let $X$ be an implicative-orthomodular lattice, and let $x,y\in X$. 
Then $x\ra (x^*\Cap y^*)=x^*$ iff $y\ra (y^*\Cap x^*)=y^*$. 
\end{proposition}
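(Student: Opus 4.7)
The plan is to reduce the stated equivalence to the symmetry of the commutativity relation $\mathcal{C}$, which is exactly the content of Theorem \ref{dioml-10-10}. The main insight is that the condition $x\ra (x^*\Cap y^*)=x^*$ is nothing other than a reformulation of $x\mathcal{C} y$.

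To see this, I would apply Theorem \ref{dioml-120-10} with $x$ replaced by $x^{*}$ (and $y$ unchanged). Since $X$ is involutive, $x^{**}=x$, and the identity
\[
((x^{*}\ra y^{*})\ra (x^{*}\ra y)^{*})^{*}=x^{*}\ra (x\Cap y^{*})
\]
turns into
\[
((x\ra y^{*})\ra (x\ra y)^{*})^{*}=x\ra (x^{*}\Cap y^{*}).
\]
Taking duals, $x\ra (x^{*}\Cap y^{*})=x^{*}$ is therefore equivalent to $(x\ra y^{*})\ra (x\ra y)^{*}=x$, which by Definition \ref{dioml-10} is precisely $x\mathcal{C} y$.

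Swapping the roles of $x$ and $y$ in the same argument shows that $y\ra (y^{*}\Cap x^{*})=y^{*}$ is equivalent to $y\mathcal{C} x$. By Theorem \ref{dioml-10-10}, the commutativity relation is symmetric on an implicative-orthomodular lattice, so $x\mathcal{C} y$ holds if and only if $y\mathcal{C} x$ holds, and the proposition follows. The only delicate point is spotting the correct instance of Theorem \ref{dioml-120-10} to invoke; once that substitution is identified, the rest is purely formal and uses no further properties of the operations.
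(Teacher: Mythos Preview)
Your proof is correct and follows essentially the same route as the paper: both arguments establish that $x\ra(x^*\Cap y^*)=x^*$ is equivalent to $x\mathcal{C}y$ and then invoke the symmetry of $\mathcal{C}$ from Theorem~\ref{dioml-10-10}. The only difference is that the paper re-derives the needed identity $((x\ra y^*)\ra(x\ra y)^*)^*=x\ra(x^*\Cap y^*)$ directly from the Foulis--Holland theorem applied to the triple $(x^*,y^*,x\ra y)$, whereas you recognize it as the $x\mapsto x^*$ instance of Theorem~\ref{dioml-120-10}, which is a neat shortcut.
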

\begin{proof}
Since $x^*\le_Q x\ra y$ and $y\le_Q x\ra y$, we get $x^*\mathcal{C} (x\ra y)$ and $y\mathcal{C} (x\ra y)$, so that 
$(x\ra y)\mathcal{C} x^*$ and $(x\ra y)\mathcal{C} y^*$. By Foulis-Holland theorem, the triple $(x^*,y^*,x\ra y)$ 
is distributive, and applying $(Idis_1)$ for $X:=x^*$, $Y:=y^*$ and $Z:=x\ra y$, we have: \\
$\hspace*{2.00cm}$ $((x\ra y^*)\ra (x\ra y)^*)^*=(x^*\ra (x\ra y)^*)\ra (y^*\ra (x\ra y)^*)^*$ \\
$\hspace*{6.15cm}$ $=((x\ra y)\ra x)\ra ((x\ra y)\ra y)^*$ \\
$\hspace*{6.15cm}$ $=x\ra (x\Cup y)^*$ (by $(Impl)$) \\
$\hspace*{6.15cm}$ $=x\ra (x^*\Cap y^*)$. \\
Similarly, the triple $(y^*,x^*,y\ra x)$ is distributive, and we get: 
$((y\ra x^*)\ra (y\ra x)^*)^*=y\ra (y^*\Cap x^*)$. 
If $x\ra (x^*\Cap y^*)=x^*$, it follows that $((x\ra y^*)\ra (x\ra y)^*)^*=x^*$, hence $(x\ra y^*)\ra (x\ra y)^*=x$, 
that is $x\mathcal{C} y$. By Theorem \ref{dioml-10-10}, we have $y\mathcal{C} x$, so that  
$(y\ra x^*)\ra (y\ra x)^*=y$, hence $y\ra (y^*\Cap x^*)=((y\ra x^*)\ra (y\ra x)^*)^*=y^*$. 
The converse follows similarly. 
\end{proof}

Using the Foulis-Holland theorem, we can give new proofs of certain known properties of implicative-orthomodular lattices. 

\begin{proposition} \label{dioml-120-70} Let $X$ be an implicative-orthomodular lattice. The following hold 
for all $x,y\in X$: \\
$(1)$ $x\ra (y\Cap x)=x\ra y;$ \\
$(2)$ $(x\ra y)\ra (y\Cap x)=x;$ \\
$(3)$ $(x\ra y)\ra (y\ra x)=y\ra x;$ \\
$(4)$ $(x\Cup y)\ra (x\ra y)^*=y^*$.  
\end{proposition}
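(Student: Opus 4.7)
The strategy is to apply the Foulis-Holland theorem (Theorem~\ref{dioml-100}) in each case, after identifying a third element that commutes with both $x$ and $y$; the resulting distributivity of the triple then yields each identity after simplification via $(Impl)$ and the implication identities of Lemma~\ref{qbe-10}. The preparatory step is to record three commutativity facts, each a consequence of Lemma~\ref{dioml-20}(3), Lemma~\ref{dioml-30}, Proposition~\ref{ioml-50}(7), and $(IOM)$--$(IOM^{''})$: (a) $y \Cap x$ commutes with both $x$ and $y$, since $y \Cap x \le_Q x$ and $y \Cap x \le_Q y$; (b) $x \Cup y$ commutes with both $x$ and $y$, since $x, y \le_Q x \Cup y$; (c) $x \ra y$ commutes with both $x$ and $y$, since $(x \ra y)^* \le_Q x$ by $(IOM^{''})$ and $y \le_Q x \ra y$ by $(IOM)$.

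For (1), I would apply Foulis-Holland to the distributive triple $(x, y, y \Cap x)$ supplied by (a); combined with Proposition~\ref{dioml-70} (used in the form $x \Cap (y \Cap x) = ((y\Cap x)\ra x^*)^* = y \Cap x$, since $y \Cap x \le_Q x$) and Lemma~\ref{qbe-10}(3), this extracts $x \ra (y \Cap x) = x \ra y$. For (2), the distributive triple $(x, y, x \ra y)$ from (c) supplies a distributivity equation from which $(x \ra y) \ra (y \Cap x) = x$ follows once the simplification $(x \ra y) \ra x = x$ from $(Impl)$ is invoked. For (4), the distributive triple $(x, y, x \Cup y)$ from (b), together with an appropriate choice of $(Idis_1)$ or $(Idis_2)$ and the unfolding $x \Cup y = (x \ra y) \ra y$, delivers $(x \Cup y) \ra (x \ra y)^* = y^*$.

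Identity (3) admits a particularly short direct argument via $(BE_4)$ and $(Impl)$:
\[
(x \ra y) \ra (y \ra x) \;=\; y \ra ((x \ra y) \ra x) \;=\; y \ra x,
\]
so no appeal to Foulis-Holland is strictly needed there. The main obstacle throughout lies in the calculational bookkeeping required for (1), (2), and (4): for each identity, one must choose the right permutation of the triple and the right one of $(Idis_1)$, $(Idis_2)$ so that the distributive equation collapses cleanly to the desired form after the standard simplifications using $(Impl)$, $(Pimpl)$, and Lemma~\ref{qbe-10}(3), (5)--(7).
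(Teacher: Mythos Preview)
Your preparatory claims (a) and (b) are false in a general implicative-orthomodular lattice, and this breaks your approach to (1) and (4). The operations $\Cap$ and $\Cup$ are \emph{not} commutative here (that is precisely the content of Corollary~\ref{dioml-70-10}), so Proposition~\ref{ioml-50}(7) gives only $y\Cap x\le_Q x$ and $y\le_Q x\Cup y$; it does \emph{not} give $y\Cap x\le_Q y$ or $x\le_Q x\Cup y$. Concretely, in the implicative-orthomodular lattice corresponding to $\mathrm{MO}_2$ (four pairwise incomparable atoms $a,a^*,b,b^*$ between $0$ and $1$), one computes $a\Cap b=b$ and $a\Cup b=b$, so neither $a\Cap b\le_Q a$ nor $a\le_Q a\Cup b$ holds. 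Hence $y\Cap x$ need not commute with $y$ and $x\Cup y$ need not commute with $x$, and the Foulis--Holland theorem is not applicable to your triples $(x,y,y\Cap x)$ and $(x,y,x\Cup y)$.

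The paper sidesteps this by choosing triples in which the two required commutativities never involve $x$ and $y$ jointly: for (1) it takes $((x\ra y)^*,x^*,x)$, where $x\mathcal{C}x^*$ trivially and $x\mathcal{C}(x\ra y)^*$ because $(x\ra y)^*\le_Q x$; for (4) it takes $((x\ra y)^*,y,x\ra y)$, where $(x\ra y)\mathcal{C}(x\ra y)^*$ trivially and $(x\ra y)\mathcal{C}y$ because $y\le_Q x\ra y$. Your claim (c) is correct and your triple $(x,y,x\ra y)$ for (2) is legitimate, though the paper instead uses $((x\ra y)^*,x^*,x\ra y)$. Your direct argument for (3) via $(BE_4)$ and $(Impl)$ is valid and is in fact shorter than the paper's Foulis--Holland derivation.
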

\begin{proof}
$(1)$ Since $x\mathcal{C} x^*$, and $x^*\mathcal{C} (y^*\ra x^*)$ implies $x\mathcal{C} (y^*\ra x^*)^*$, by Foulis-Holland theorem the triple 
$((y^*\ra x^*)^*,x^*,x)$ is distributive. Applying $(Idis_1)$ for $X:=(y^*\ra x^*)^*$, $Y:=x^*$, $Z:=x$, 
we get successively: \\
$\hspace*{2.00cm}$ $(((y^*\ra x^*)\ra x^*)\ra x^*)^*=((y^*\ra x^*)^*\ra x^*)\ra (x^*\ra x^*)^*$, \\
$\hspace*{2.00cm}$ $((y^*\Cup x^*)\ra x^*)^*=(x\ra (y^*\ra x^*))\ra 1^*$, \\
$\hspace*{2.00cm}$ $(x\ra (y\Cap x))^*=(x\ra (x\ra y))^*$, \\
$\hspace*{2.00cm}$ $x\ra (y\Cap y)=x\ra y$ (by $(Pimpl)$). \\
$(2)$ Obviously, $(x\ra y)\mathcal{C} (x\ra y)^*$ implies $(x\ra y)\mathcal{C} (y^*\ra x^*)^*$, and 
$x^*\mathcal{C} (x\ra y)$ implies $(x\ra y)\mathcal{C} x^*$. Hence the triple  $((y^*\ra x^*)^*,x^*,x\ra y)$ is distributive. By $(Idis_1)$ for $X:=(y^*\ra x^*)^*$, $Y:=x^*$, $Z:=x\ra y$, we have successively: \\ 
$\hspace*{1.00cm}$ $(((y^*\ra x^*)\ra x^*)\ra (x\ra y)^*)^*=((y^*\ra x^*)^*\ra (x\ra y)^*)\ra (x^*\ra (x\ra y)^*)^*$, \\
$\hspace*{1.00cm}$ $((y^*\Cup x^*)\ra (x\ra y)^*)^*=((x\ra y)^*\ra (x\ra y)^*)\ra ((x\ra y)\ra x)^*$, \\
$\hspace*{1.00cm}$ $((x\ra y)\ra (y\Cap x))^*=1\ra x^*$ (by $(Impl)$), \\
$\hspace*{1.00cm}$ $(x\ra y)\ra (y\Cap x)=x$.  \\
$(3)$ We have $((x\ra y)\ra (y\ra x))^*=((x\ra y)\ra (x^*\ra y^*))^*$, and using $(Idis_1)$ for the distributive 
triple $(x^*,y,(x^*\ra y^*)^*)$, it follows that: \\
$\hspace*{2.00cm}$ $((x\ra y)\ra (x^*\ra y^*))^*=(x^*\ra (x^*\ra y^*))\ra (y\ra (x^*\ra y^*))^*$ \\
$\hspace*{6.10cm}$ $=(x^*\ra y^*)\ra (x^*\ra (y\ra y^*))^*$ (by $(Pimpl)$) \\
$\hspace*{6.10cm}$ $=(x^*\ra y^*)\ra (x^*\ra y^*)^*$ (by $(iG)$) \\
$\hspace*{6.10cm}$ $=(x^*\ra y^*)^*$ (by $(iG)$) \\
$\hspace*{6.10cm}$ $=(y\ra x)^*$. \\
Hence $(x\ra y)\ra (y\ra x)=y\ra x$. \\
$(4)$ The triple $((x\ra y)^*,y,(x\ra y))$ is distributive, hence, by $(Idis_1)$ for $X:=(x\ra y)^*$, $Y:=y$, 
$Z:=x\ra y$, we have: \\
$\hspace*{1.75cm}$ $((x\Cup y)\ra (x\ra y)^*)^*=(((x\ra y)\ra y)\ra (x\ra y)^*))^*$ \\
$\hspace*{5.50cm}$ $=(((x\ra y)^*\ra (x\ra y)^*)\ra (y\ra (x\ra y)^*))^*$ \\
$\hspace*{5.50cm}$ $=1\ra ((x\ra y)\ra y^*)^*$ \\
$\hspace*{5.50cm}$ $=((y^*\ra x^*)\ra y^*)^*$ \\
$\hspace*{5.50cm}$ $=(y^*)^*=y$ (by $(Impl)$). \\
It follows that $(x\Cup y)\ra (x\ra y)^*=y^*$. 
\end{proof}

$\vspace*{5mm}$

%\begin{center}
%\sc Acknowledgement 
%\end{center}
%The author is very grateful to the anonymous referees for their useful remarks and suggestions on the subject that %helped improving the presentation.

$\vspace*{1mm}$


\begin{thebibliography}{99}

\bibitem{Balbes} R. Balbes, P. Dwinger, \emph{Distributive lattices}, Univ. Missouri Press, 1974.

\bibitem{Beran} L. Beran, \emph{Orthomodular Lattices: Algebraic Approach. Mathematics and its Applications}, 
Springer, Netherland, 1985.

\bibitem{Birk1} G. Birkhoff, J. von Neumann, \emph{The logic of quantum mechanics}, Ann. Math. {\bf 37}(1936), 823--834. 

\bibitem{Birk2} G. Birkhoff, \emph{Lattice theory}, Amer. Math. Soc. Colloq. Publ., Amer. Math. Soc., 1961. 

\bibitem{Bonzio1} S. Bonzio, I. Chajda, \emph{A note on orthomodular lattices}, Internat. J. Theoret. Phys. 
{\bf 56}(2017), 3740--3743.

\bibitem{Ciu83} L.C. Ciungu, \emph{Implicative-orthomodular lattices}, arxiv.org:2401.12845. 

\bibitem{DvPu} A. Dvure\v censkij, S. Pulmannov\'a, \emph{New trends in Quantum Structures}, Kluwer Academic Publishers, Dordrecht, Ister Science, Bratislava, 2000.

\bibitem{DvKu} A. Dvure\v censkij, M. Kukov\'a, \emph{Observables on quantum structures}, Inf. sci. {\bf 262}(2014), 
215--222. 

\bibitem{Foulis1} D.J. Foulis, \emph{A note on orhomodular lattices}, Portugal. Math. {\bf 21}(1962), 65--72.

\bibitem{Holl1} S.S. Holland, \emph{A Radon-Nikodim theorem in dimension lattices}, Trans. Am. Math. Soc. 
{\bf 108}(1963), 66--87. 

\bibitem{Holl2} S.S. Holland, \emph{Distributivity and perspectivity in orthomodular lattices}, 
Trans. Am. Math. Soc. {\bf 112}(1964), 330--343. 

\bibitem{Ior32} A. Iorgulescu, \emph{On quantum-MV algebras - Part II: Orthomodular lattices, softlattices and widelattices}, Trans. Fuzzy Sets Syst. {\bf 1}(2022), 1--41.

\bibitem{Ior35} A. Iorgulescu, \emph{BCK algebras versus m-BCK algebras}. Foundations, Studies in Logic, Vol. 96, 2022. 

\bibitem{Kim1} H.S. Kim, Y.H. Kim, \emph{On BE-algebras}, Sci. Math. Jpn. {\bf 66}(2007), 113--116.

\bibitem{PadRud} R. Padmanabhan, S. Rudeanu, \emph{Axioms for lattices and Boolean algebras}, World Scientific, Singapore, 2008.

\bibitem{Varad1} V.S. Varadarajan, \emph{Geometry of Quantum Theory}, Vol. 1, D. Van Nostrand, Princeton, New Jersey, 1968.

\end{thebibliography}
\end{document}